\newtheorem{theorem}{Theorem}
\newtheorem{lemma}{Lemma}
\theoremstyle{definition}
\newtheorem{remark}{Remark}
\DeclareMathOperator{\tr}{tr}
\DeclareMathOperator{\vect}{vect}
\newcommand{\EE}{{\mathbb{E}}}
\renewcommand{\phi}{\varphi}
\begin{document}

\title{\Large\bfseries Online Tracking of a Predictable Drifting Parameter of a Time Series}
\author{\bfseries Eduard Belitser\\ \vspace{-0.75em} {\rm Department of Mathematics, VU University Amsterdam}
\and
\bfseries Paulo Serra\\ \vspace{-0.75em} {\rm Institute for Mathematical Stochastics, University of G\"ottingen}}
\date{\today}

\maketitle

\begin{abstract}
We propose an online algorithm for tracking a multidimensional time-varying parameter of a time series,
which is also allowed to be a predictable process with respect to the underlying time series.
The algorithm is driven by a gain function. Under assumptions on the gain, we derive 
uniform non-asymptotic error bounds
on the tracking algorithm in terms of chosen step size for the algorithm 
and the variation of the parameter of interest. We also outline 
how appropriate gain functions can be constructed. 
We give several examples of different 
variational setups for the parameter process where our result can be applied.
The proposed approach covers many frameworks and models 
(including the classical Robbins-Monro and Kiefer-Wolfowitz procedures) where 
stochastic approximation algorithms comprise the main inference tool for the data analysis.
We treat in some detail a couple of specific models.
\end{abstract}

{\bf Keywords:}
on-line tracking;
predictable drifting parameter;
recursive algorithm;
stochastic approximation procedure;
time series;
time-varying parameter.

\tableofcontents

\section{Introduction}
 
When one analyzes data that arrive sequentially over time, it is important to detect 
changes in the underlying model which can then be adjusted accordingly.
Such problems arise in many engineering (signal processing, speech recognition, 
communication systems), econometric and biomedical applications 
and can be found in an extensive literature widely scattered in these fields. 
Inference on 
time-varying parameters in stochastic systems is 
therefore of fundamental interest in sequential analysis.

Consider an $\mathcal{X}$-valued time series $\{X_k, \, k\in\mathbb{N}_0\}$, 
$\mathbb{N}_0=\mathbb{N}\cup\{0\}$, $\mathcal{X} \subseteq\mathbb{R}^{l_k}$, $l_k \in \mathbb{N}$,
such that at time moment $k=0$ the first observation $X_0\sim \mathbb{P}_0$ and 
subsequently at each time moment $k\in \mathbb{N}$ a new observation $X_k$ arrives 
according to the model $X_k|\bm{X}_{k-1} \sim \mathbb{P}_k(\cdot|\bm{X}_{k-1})$,
where $\bm{X}_{k-1}=(X_0, X_1,\dots, X_{k-1})$.
Suppose we are interested in certain characteristics of the 
conditional distribution of $X_k$ given the past $\bm{X}_{k-1}=\bm{x}_{k-1}$:
$A_k(\mathbb{P}_k(\cdot|\bm{x}_{k-1}))=\theta_k(\bm{x}_{k-1})$. Here $A_k$ is an operator mapping 
conditional distributions $\mathbb{P}_k(\cdot|\bm{x}_{k-1})$ into measurable $\Theta$-valued 
functions $\theta_k(\bm{x}_{k-1})$, $\bm{x}_{k-1} \in \mathcal{X}^k$, $\Theta$ is a 
compact subset of $\mathbb{R}^d$.
The goal is to estimate (or to track) $\theta_k(\bm{X}_{k-1})$ at time instant $k\in\mathbb{N}_0$,
based on the data $\bm{X}_{k}$  
(and prior information) available by that time moment.

The traditional parametric formulation is the most simple particular case of the above setting:
the observations are independent and the parameter $\theta \in \Theta$ is a constant vector. 
The simplest nonparametric formulation deals again with independent observations and 
time-varying parameter $\theta_k\in \Theta$, $k\in\mathbb{N}_0$ (cf.\ 
\citep{Belitser:2000, Belitser&Korostelev:1992, Belitser&Serra:2013}. 
Modeling  observations by a Markov chain with a time varying parameter of the transition law 
would add a next level of complexity (cf.\ for the autoregressive model in 
\citep{Belitser:2000, Moulines:2005}). 

The proposed time series formulation admits an arbitrary 
dependence structure between the observations.
Another important and peculiar feature of our approach is that  
the multidimensional parameter $\theta_k\in \Theta \subseteq \mathbb{R}^d$, $k\in \mathbb{N}$, 
besides being time-varying, is also allowed to depend on the past of the time series. It is thus a predictable 
process with respect to the natural filtration: $\theta_k=\theta_k(\bm{X}_{k-1})$. 
An example of such characteristics is the conditional expectation 
$\theta_k(\bm{X}_{k-1})=\EE [X_k| \bm{X}_{k-1}]$. 
This time series formulation, with time-varying parameter of interest which is also allowed to depend on the past,
represents the most general sequential  framework, 
independent observations and Markov chains
are typical examples of models that fit into this framework.

Since the data arrives in a successive manner, conventional methods based on samples of fixed size are not easy to use. 
A more appropriate approach is based on sequential methods, stochastic recursive 
algorithms, which allow fast updating of parameter or state estimates at each instant as new data arrive 
and therefore can be used to produce an ``online'' inference, that is, during the operation of the system.
Stochastic recursive algorithms, also known as stochastic approximation, take many forms and have 
numerous applications in the biomedical, socio-economic and engineering sciences, which highlights 
the interdisciplinary nature of the subject.

There is a vast literature on stochastic approximation beginning with the seminal papers of 
Robbins and Monro \citep{Robbins:1951} and of Kiefer and Wolfowitz \citep{Kiefer:1952}.
There is a big variety of techniques in the area of stochastic approximation which have been 
developed and inspired by the applications from other fields. We mention here the books 
\citep{Wasan:1969, Tsypkin:1971, Nevelson:1976, Kushner:1978, Ljung:1983, Benveniste:1990, Kushner:2003}.


A classical topic in adaptive control concerns the problem of tracking drifting parameters of a 
linear regression model, or somewhat equivalently, tracking the best linear fit when the parameters change slowly.
This problem also occurs in communication theory for adaptive equalizers and noise cancellation, etc., 
where the signal, noise, and channel properties change with time.
Successful stochastic approximation schemes for tracking in the time-varying case were given by 
\citep{Brossier:1992, Delyon:1995, Kushner:1995,Kushner:2003} (see further references therein). 


Coming back to our time series model, the problem of tracking a time-varying parameter 
that is a functional of conditional distribution of the current observation given the past,  is clearly 
unfeasible, especially in such general formulation, without some conditions on the model.
In general, some knowledge about the structure of underlying time seres and some control over the variability 
of the parameter of interest over time  are needed.
Interestingly, in this seemingly very general time series framework, we actually do not require the (full) 
knowledge of the observational  model.
Instead, all we do need is to be able to compute a so called \emph{gain vector} at each time moment $k\in \mathbb{N}$, 
which is a certain (vector) function of the previous estimate of the parameter $\theta_k(\bm{X}_{k-1})$, new observation $X_k$ and 
prehistory $\bm{X}_{k-1}$. The essential property of such gain vector is that it, roughly speaking, ``pushes'' in the right direction
of the current value of true parameter to track.
Although the assumption about the existence of that gain vector seems to be rather strong, 
we demonstrate on a number of interesting examples when such an assumption indeed holds. 
Basically, in case of Markov chain observations, if the form of transition density is known 
as function of the underlying parameter and it satisfies certain regularity assumptions, 
then the gain vector can always be constructed, for example, as a score function corresponding 
to the conditional maximum likelihood method.
Under appropriate regularity conditions (the existence of the conditional Fisher information and 
$L_2$-differentiability of the conditional log likelihood), such a score function 
has always the property of gain vector at least locally.

A gain function, together with a step sequence and new observations from the model, 
can be used to adjust the current approximation of the drifting parameter, resulting in a tracking algorithm.
To ease the verification of our assumptions on the gain function, we formulate them in two equivalent forms.
Under some assumptions on the gain vectors, we establish a uniform non-asymptotic bound the $L_1$ error of 
the resulting tracking algorithm, in terms of the variation of the drifting parameter.
Under the extra assumption that the gain function is bounded, we can strengthen this result to a uniform 
bound on the $L_p$ error (and then an almost sure bound). 
These error bounds constitute our main result and they also guide us in the choice of the step size for the algorithm.
Some extensions are also presented where we allow for approximation terms and approximate gains.

Based on our main result, we specify the appropriate choice for the step sequence in three different 
variational setups for the drifting parameter. We treat first the simple case of a constant parameter.
Although we are mainly concerned with tracking time-varying parameters, our algorithm is still of interest 
in the constant parameter case since it should result in an algorithm which is both recursive and robust.
We also consider a setup where the parameter is stabilizing.
This covers both the case where the parameter is converging and where we sample the signal with 
increasing frequency. The third variational setup covers the important case of tracking smooth signals.
This setup is somewhat different in that we make observations with a certain frequency from an 
underlying continuous time process which is indexed by a parameter changing like a Lipschitz function.
Our result can then either be interpreted as a uniform, non-asymptotic result for each fixed sampling 
frequency or as an asymptotic statement in the observation frequency.

Examples are also given for different possible gain functions.
These fall into two categories: general, score based gain functions for tracking multidimensional 
parameters in regular models and specialized gains for tracking more specific quantities.
The latter include gains to track level sets or maxima of drifting functions (extending the classical 
Robbins-Monro  and Kiefer-Wolfowitz algorithms) and gains to track drifting conditional quantiles.
We also propose modifications for a given gain function (rescaling, truncation, projection) 
which can be used to design gains tailored specifically to verify our assumptions.

We illustrate our method by treating some concrete applications of the proposed algorithm, in particular,  
we elaborate on the problem of tracking drifting parameters in autoregressive models.
Results on tracking algorithms for these models already exist in the literature 
(cf.\
\citep{Belitser:2000, Moulines:2005}) 
and we can derive similar results by choosing an appropriate gain function.
Using our approach, obtaining error bounds on the resulting tracking algorithm reduces to verifying 
our assumptions for the chosen gain function which considerably simplifies the derivation of results.

This paper is structured as follows.
In Section~\ref{sec:preliminaries} we summarize the notation that will be used thought the paper, 
as well as our model and  two equivalent formulations for our assumptions. Section~\ref{sec:main_result} 
contains our main result and respective proof as well as some straightforward extensions of the main result.
The construction and modification of gain functions for different models and different parameters of 
interest is explained in Section~\ref{sec:gains}. Section~\ref{sec:variational_setups} contains three examples 
of variational setups for the time-varying parameter for which we specify the tracking error implied by our main result.
We collect in Section~\ref{sec:examples} some examples of applications.
Section~\ref{sec:proofs} contains the proofs for our lemmas.

\section{Preliminaries}
\label{sec:preliminaries}

First we introduce some notation that we are going to use throughout the paper.
All vectors are always column vectors. 
We use bold letters to represent matrices and families of vectors,
uppercase letters for families of random vectors and matrices; 
denote $\bm{x}_n=(x_0,x_1,\ldots, x_n)$ for a set of vectors $x_0, \ldots, x_n$. 
For $k_0,k \in \mathbb{Z}$,
$\mathbb{N}_k=\{l\in\mathbb{Z}: l\ge k\}$ (of course, $\mathbb{N}=\mathbb{N}_1$), 
$\mathbb{N}_{k_0,k}=\{l\in\mathbb{Z}:k_0\le l\le k\}$. 
We use interchangeably $(a_i, i \in I)=\{a_i, i \in I\} = \{a_i\}_{i\in I}=(a_i)_{i\in I}$ for $I \subseteq \mathbb{N}_0$.
For vectors $x,y \in \mathbb{R}^d$, denote by $\|x\|=\|x\|_2$ and 
$\left\langle x,y\right\rangle=x^Ty$  the usual Euclidean norm and the inner 
product in $\mathbb{R}^d$, respectively, and by $\|x\|_p$ the $l_p$ norm (with $p\ge 1$) 
on vectors in $\mathbb{R}^d$.
For an event ${A}$, we denote by  $\mathbb{I}\{A\}$ 
the indicator of the event ${A}$. For a symmetric $(d\times d)$-matrix 
$\bm{M}$, let $\Lambda_{(1)}(\bm{M})$ and $\Lambda_{(d)}(\bm{M})$ 
be the smallest and the largest eigenvalues of $\bm{M}$ respectively.
Denote  by $\bm{O}$ denote the zero matrix and by $\bm{I}$ the identity matrix,
whose dimensions will be determined by the context. 
Besides, we adopt the convention that $\sum_{i\in\varnothing} \bm{A}_i = \bm{O}$ 
and $\prod_{i\in\varnothing} \bm{B}_i = \bm{I}$ 
for matrices $\bm{A}_i$ and $\bm{B}_i$. 
When applied to matrices, $\|\cdot\|_p$ represents the matrix norm
induced by the $l_p$ vector norm:
$\|\bm{M}\|_p = 
\max_{\|x\|=1}\|\bm{M} x\|_p$. 

Assume that by time $n\in\mathbb{N}_0$, time series data $\bm{X}_n=(X_0, X_1, \dots, X_n)$ 
(which we may not be fully observable) occur according to the following model:
\begin{equation}
\label{model}
 X_0\sim \mathbb{P}_0, \qquad X_k|\bm{X}_{k-1} \sim \mathbb{P}_k(\cdot|\bm{X}_{k-1}),\quad k \in \mathbb{N}.
\end{equation}
Here vector $X_k$ takes value in some set 
$\mathcal{X}_k \subseteq \mathbb{R}^{l_k}$ with $l_k \in \mathbb{N}$,
i.e., $\mathbb{P}(X_k \in \mathcal{X}_k )=1$, $k\in\mathbb{N}_0$.
By $\mathbb{P}_k(\cdot|\bm{X}_{k-1})$ we denote the conditional distribution of $X_k$ given $\bm{X}_{k-1}$ and
$\mathcal{X}^{k+1}=\mathcal{X}^k \times\mathcal{X}_{k+1}$ for $k\in\mathbb{N}_0$,
 with $\mathcal{X}^0= \mathcal{X}_0$. Thus, $\bm{X}_k$ takes values in $\mathcal{X}^k$.
Clearly,  the distribution of  $\bm{X}_n$, $n\in\mathbb{N}_0$, is given by
\[
\mathbb{P}^{(n)}=\mathbb{P}^{(n)}(\bm{x}_n)=\prod_{k=0}^n 
\mathbb{P}_k(x_k|\bm{x}_{k-1}), \quad \bm{x}_k\in \mathcal{X}^k, \; k=0,1,\ldots, n,
\] 
where $\mathbb{P}_0(x_0|\bm{x}_{-1})$ should be understood as $\mathbb{P}_0(x_0)$.
For the sake of consistent notation, $\bm{x}_{-1}$ (also $\bm{x}_{-2}$ etc.) means void variables from now on.

Introduce an increasing sequence of $\sigma$-algebras $\{\mathcal{F}_k\}_{k\in\mathbb{N}_{-1}}$ 
(i.e., $\mathcal{F}_{k_1} \subseteq \mathcal{F}_{k_2}$ if $k_1\le k_2$) such that 
$\mathcal{F}_k\subseteq \sigma(\bm{X}_k)$, $k\in\mathbb{N}_0$, where $\sigma(\bm{X}_k)$
denote the $\sigma$-algebra  generated by $\bm{X}_k$ and  $\mathcal{F}_{-1}$ is a
$\sigma$-algebra  such that $\mathcal{F}_{-1} \subseteq \mathcal{F}_0$. 
Unless otherwise specified, we assume that $\mathcal{F}_k=\sigma(\bm{X}_k)$.
As is  discussed in Remark \ref{rem:filtration} below, 
the case $\mathcal{F}_k \subset \sigma(\bm{X}_k)$ is  not conceptually new
because it can be reduced to the situation of the time series $\{\bm{Z}_k, k\in \mathbb{N}_0\}$
with $\bm{Z}_k=h_k(\bm{X}_k)$ for some $(\sigma(\bm{X}_k),\mathcal{F}_k)$-measurable 
functions $h_k$'s such that $\mathcal{F}_k =\sigma(\bm{Z}_k)$, $k\in \mathbb{N}_0$.

Now we describe the statistical model which is a family probability measures for the observed data.
For each $k\in\mathbb{N}_0$, let  $\mathcal{P}_k$ be a given family of conditional probability 
measures of $X_k$ given $\bm{X}_{k-1}=\bm{x}_{k-1}\in \mathcal{X}^{k-1}$. 
Then the underlying statistical model is determined by imposing  
$\mathbb{P}_k(\cdot|\bm{x}_{k-1}) \in \mathcal{P}_k$, $k\in\mathbb{N}_0$.
Thus, at time $n\in \mathbb{N}_0$, the underlying (growing) statistical model is 
$\mathbb{P}^{(n)}\in \mathcal{P}^{(n)} = \Big\{\prod_{k=0}^n \mathbb{P}_k(x_k|\bm{x}_{k-1}): \, 
\mathbb{P}_k(\cdot|\bm{x}_{k-1}) \in \mathcal{P}_k\Big\}$. 

For some  
compact subset $\Theta$ of $\mathbb{R}^{d}$, 
denote by $\mathcal{B}_\Theta$  the Borel $\sigma$-algebra on $\Theta$ 
and by $\mathcal{M}_k$ the set  of $\Theta$-valued 
$(\mathcal{F}_k,\mathcal{B}_{\Theta})$-measurable functions on 
$\mathcal{X}^k$, $k\in\mathbb{N}_0$. Consider a sequence of operators 
$A_k:\, \mathcal{P}_k \mapsto \mathcal{M}_{k-1}$, $k\in\mathbb{N}_0$, 
so that for a $\mathbb{P}_k(\cdot|\bm{x}_{k-1}) \in \mathcal{P}_k$
\[
A_k(\mathbb{P}_k(\cdot|\bm{x}_{k-1}))=\theta_k(\bm{x}_{k-1}), \quad \bm{x}_{k-1}\in\mathcal{X}^{k-1},
\]
with  $\theta_k(\bm{x}_{k-1}) \in \mathcal{M}_{k-1}$.
We will often abbreviate  $\theta_k=\theta_k(\bm{x}_{k-1})$, remembering that this is 
a measurable function of $\bm{x}_{k-1}$.
For $k=0$, since $\bm{x}_{-1}$ is void, $\theta_0(\bm{x}_{-1})=\theta_0 \in \Theta$ means a constant.

Our goal is to design an online algorithm for tracking
the drifting \emph{parameter of interest} $\theta_k(\bm{X}_{k-1})$
at time moment $k\in \mathbb{N}_0$
on the basis of the data $\bm{X}_k$ observed by that time moment.
The time-varying parameter $\theta_k=\theta_k(\bm{X}_{k-1})$, $k\in\mathbb{N}_0$, 
is thus allowed to depend on the past of the time series, i.e., it is a predictable process 
with respect to the filtration $\{\mathcal{F}_k\}_{k \in\mathbb{N}_{-1}}$. 
Recall further that $\theta_k$ is assumed to take values in some 
compact subset $\Theta$ of $\mathbb{R}^d$, to be precise, 
$\mathbb{P}({\theta}_k(\bm{X}_{k-1})\in\Theta) = 1$ for all $k\in \mathbb{N}_0$.
Denote
\begin{equation}
\label{C_Theta} 
\sup_{\theta\in \Theta} \|{\theta}\|^2 = C_\Theta.
\end{equation}

At time $k$, given $\bm{X}_k$, the model $\mathcal{P}_{k+1}$ contains all the relevant 
information about the next observation. Actually, we do not consider the model 
to be (completely) known. 
Instead, we assume that our prior knowledge about the model is formalized as follows:
for each $k\in\mathbb{N}_0$, we have certain 
$(\mathcal{B}_\Theta\times \mathcal{F}_k, \mathcal{B}_\Theta)$-measurable 
functions $G_k$ at our disposal (which we call \emph{gain functions} 
or \emph{gain vectors} or just \emph{gains}), $G_k: \, \mathbb{R}^d\times\mathcal{X}^k\mapsto\mathbb{R}^d$.
We use these gain functions to construct a recursive algorithm for tracking the sequence 
$\theta_k=\theta_k(\bm{X}_{k-1})\in \Theta\subset \mathbb{R}^d$ from the observations (\ref{model}):
\begin{equation}
\label{eq:algorithm_main}
\hat{\theta}_{k+1} = \hat{\theta}_k + \gamma_k G_k(\hat{\theta}_k,\bm{X}_k), \quad k \in \mathbb{N}_0,
\end{equation}
for  some positive  sequence $\gamma_k\le\Gamma$ 
and some (arbitrary) initial value $\hat{\theta}_0 \in\Theta\subset \mathbb{R}^d$, 
measurable with respect to $\mathcal{F}_{-1}$. 
Since $\hat{\theta}_k=\hat{\theta}_k(\bm{X}_{k-1})$ is 
$\mathcal{F}_{k-1}$-measurable, then 
$\{\hat{\theta}_k\}_{k \in \mathbb{N}_0}$ is predictable with respect to the filtration 
$\{\mathcal{F}_k\}_{k \in\mathbb{N}_{-1}}$. Notice that $\hat{\theta}_0$ can be a random vector
if $\mathcal{F}_{-1}$ is not the trivial $\sigma$-algebra.

Of course, it is not to be expected that the tracking algorithm (\ref{eq:algorithm_main})
performs well for arbitrary gains. Intuition suggests that 
the gain $G_k$ should ``push'' $\hat{\theta}_k$ in the direction of $\theta_k$. 
The following conditions formalize this requirement.
\begin{itemize}
\item[(A1)] For all $k\in\mathbb{N}_0$, the quantity, which we call \emph{(conditional) average gain},
\begin{equation}
\label{eq:g}
g_k(\hat{\theta}_k,\theta_k)=g_k(\hat{\theta}_k,\theta_k|\bm{X}_{k-1})=
\EE \big[G_k(\hat{\theta}_k,\bm{X}_k) | \mathcal{F}_{k-1} \big]
\end{equation}
is well defined (recall that $\theta_k=\theta_k(\bm{X}_{k-1}) = A_k(\mathbb{P}_k)$ 
and $\hat{\theta}_k$ is defined by (\ref{eq:algorithm_main}))
and there exist a 
$\mathcal{F}_{k-1}$-measurable  symmetric positive definite matrix 
$\bm{M}_k=\bm{M}_k(\bm{X}_{k-1})$ 
(its entries are $\mathcal{F}_{k-1}$-measurable functions) 
such that, almost surely
\begin{equation}
\label{eq:D}
g_k(\hat{\theta}_k,\theta_k|\bm{X}_{k-1}) = -\bm{M}_k(\hat{\theta}_k-\theta_k),
\end{equation}
\begin{equation}
\label{eq:Lambda}
\lambda_1 \le \EE [\Lambda_{(1)}(\bm{M}_k)|\mathcal{F}_{k-2}],\quad 
\lambda_1 \le\Lambda_{(d)}(\bm{M}_k) \le \lambda_2,
\end{equation}
for some fixed constants $0<\lambda_1 \le \lambda_2 <\infty$.
\item[(A2)] There exists a constant $C_g>0$ such that 
\begin{equation}
\label{eq:g2}
\EE \|G_k(\hat{\theta}_k,\bm{X}_k) - g_k(\hat{\theta}_k,\theta_k|\bm{X}_{k-1}) \|^2 \le C_g, \quad k\in\mathbb{N}_0.
\end{equation}
\end{itemize}
As one can see from (\ref{eq:Lambda}), a $\sigma$-algebra  $\mathcal{F}_{-2}$,
such that $\mathcal{F}_{-2}\subseteq\mathcal{F}_{-1}$, is also needed.
Without loss of generality, assume that 
$\mathcal{F}_{-2}=\{\varnothing, \mathcal{X}_0\}$, 
the trivial $\sigma$-algebra on $\mathcal{X}_0$. 
We will often use shorthand notation: $G_k = G_k(\hat{\theta}_k, \bm{X}_k)$ and 
$g_k=g_k(\hat{\theta}_k,\theta_k |\bm{X}_{k-1})$.


\begin{remark}
Condition (A1) means, in a way, that the gain $G_k(\vartheta,\bm{X}_k)$ 
shifts any $\vartheta=\vartheta(\bm{X}_{k-1})$, on average, 
towards the ``true'' value $\theta_{k}=\theta_{k}(\bm{X}_{k-1})$. 
This elucidates the idea of algorithm (\ref{eq:algorithm_main}).
Suppose at time instant $k \in \mathbb{N}$ an observer had a reasonable estimator
$\hat{\theta}_{k-1}=\hat{\theta}_{k-1}(\bm{X}_{k-1})$ of the ``old'' value of the
parameter of interest $\theta_{k-1}=\theta_{k-1}(\bm{X}_{k-2})$, and a new data vector $X_k$ arrives. 
Then the available data is $\bm{X}_k=(X_k,\bm{X}_{k-1})$ and the observer 
can construct an estimator $\hat{\theta}_k=\hat{\theta}_k(\bm{X}_k)$ 
of the new value  of the parameter $\theta_k=\theta_k(\bm{X}_{k-1})$
by calculating the gain $G_k(\hat{\theta}_k,\bm{X}_k)$
and using a rescaled (by a step size $\gamma_k>0$) version of it  to update 
the ``old'' estimator $\hat{\theta}_{k-1}$ towards $\theta_k=\theta_k(\bm{X}_{k-1})$.

The upper bound in relation (\ref{eq:Lambda}) means that the gain is of a bounded magnitude, 
and the lower bound 
has the meaning of the so called {\em persistence of excitation} as it is termed in control theory literature. 
\end{remark}
 
\begin{remark}
Assumption (A2) is trivially satisfied if 
the gain vectors $G_k(\vartheta,\bm{X}_k)$ are almost surely uniformly bounded.
This is not so difficult to arrange, for example, by dividing the gain vector by a multiple of its length
or by truncating. In doing so, we make the resulting gain vector bounded, whereas retaining its direction.
We discuss these approaches  in more detail in Section \ref{sec:gains}.

Assumption (A2) is also satisfied if 
\begin{equation}
\label{G_bounded}
\EE\|G_k(\hat{\theta}_k,\bm{X}_k)\|^2 \le C_g, \quad k\in \mathbb{N}_0.
\end{equation}
Indeed, for a random vector $X$ with a finite second moment of its norm and any $\sigma$-algebra $\mathcal{F}$,
$\EE\big\|X-\EE(X|\mathcal{F})\big\|^2= \EE \EE \big[\|X\|^2 - 
\|\EE(X | \mathcal{F})\|^2 | \mathcal{F}\big]\le \EE \|X\|^2$.
Combining this with (\ref{G_bounded}) yields
$\EE \|G_k - g_k\|^2 
 \le \EE\|G_k\|^2 \le C_g$,  $k\in \mathbb{N}_0$.

On the other hand, from (\ref{C_Theta}), (A1) and (A2)  it follows that
\begin{align}
\EE \|G_k\|^2 &= \EE \|G_k-g_k +g_k\|^2 
 \le 2 C_g + 2 \EE\|g_k\|^2 
\le 2 C_g + 2 \lambda_2^2 \EE\|\hat{\theta}_k -\theta_k\|^2  \notag\\
\label{bound_G_k}
& \le 2 C_g + 4 \lambda_2^2 C_\Theta  + 4 \lambda_2^2
\EE\|\hat{\theta}_k\|^2,  \quad  k \in \mathbb{N}_0.
\end{align}
This relation and  Lemma \ref{lemma_bound}  below (thus the conditions 
of Lemma \ref{lemma_bound} must hold) will in turn imply the uniform bound (\ref{G_bounded}).
\end{remark}

\begin{remark}
Generally, there is no universal way to find gain vectors which satisfy
conditions (A1) and (A2). In many practical situations, the model 
$\{\mathcal{P}_k, k \in \mathbb{N}_0\}$ is typically specified and 
it is an art to find gain vectors which satisfy (A1) and (A2); 
we discuss this issue in more detail in Section \ref{sec:gains}.
The assumptions above look somewhat unnatural and cumbersome because
they are assumed to hold for all $k\in\mathbb{N}$, whereas 
functions involved in the conditions depend in general on $\bm{X}_{k-1}$
whose dimension increases unlimitedly as 
 $k$  increases.
However, the assumptions become reasonable in the important case of Markov chain
observations $\{X_k, \, k\in \mathbb{N}_0\}$ of order, say, $p$.
In this case, for any $k \in \mathbb{N}$ 
we can use vector of bounded dimension $(X_{k-p}, \ldots, X_{k-1})$ 
instead of $\bm{X}_{k-1}$ (of growing dimension) in all the quantities from conditions (A1) and (A2). 

Independent observations is a next simplification, also important in many practical applications. 
In this case there is no past involved in the function $\theta_k$, $k \in \mathbb{N}_0$, 
it will only be a function of time.
\end{remark}

\begin{remark} 
\label{rem:filtration}
Suppose that conditions (A1) and (A2) hold for the filtration 
$\mathcal{F}_{-1} \cup\{\sigma(\bm{X}_k)\}_{k\in\mathbb{N}_0}$
and for some measurable gain functions $G_k(\hat{\theta}_k, \bm{X}_k)$,
$k\in\mathbb{N}_0$,  but the parameter sequence 
$\{\theta_k\}_{k \in \mathbb{N}_0}$ is predictable with respect to 
a coarser filtration  $\{\mathcal{F}_k\}_{k\in\mathbb{N}_{-1}}$, i.e., 
$\sigma(\bm{Z}_k)=\mathcal{F}_k \subset \sigma(\bm{X}_k)$, $k\in\mathbb{N}_0$, where 
$\bm{Z}_k= h_k(\bm{X}_k)$ for some measurable $h_k$'s.
For example, the vector $X_k$ consist of two subvectors $Z_k$ and $Y_k$ (i.e., $X_k= (Z_k,Y_k)$) and 
$\mathcal{F}_k  = \sigma(\bm{Z}_k)$ with  $\bm{Z}_k=(Z_0,\ldots,Z_k)$
(think of $\bm{Y}_k$ as unobservable part of $\bm{X}_k$ and 
$\bm{Z}_k$ as observable).
Then, by the tower property of the conditional expectation, conditions (A1) and (A2) 
hold for the filtration $\{\mathcal{F}_k\}_{k\in\mathbb{N}_{-1}}$ as well if we take the 
new gain function $\bar{G}_k(\hat{\theta}_k, \bm{Z}_k) = 
\EE \big[G_k(\hat{\theta}_k, \bm{X}_k) | \mathcal{F}_k \big]$, $k\in\mathbb{N}_0$.
In fact, this means that the case of a coarser filtration $\{\mathcal{F}_k\}_{k\in\mathbb{N}_{-1}}$,
with respect to which the parameter sequence $\{\theta_k\}_{k \in \mathbb{N}_0}$ is predictable,
can be reduced to the above described setup in terms of  ``new time series'' $\bm{Z}_k$, $k\in \mathbb{N}_0$,
and the filtration $\{\mathcal{F}_k\}_{k\in\mathbb{N}_{-1}}$ generated by this new time series 
(or other way around). 
Thus, if  the parameter sequence $\{\theta_k\}_{k \in \mathbb{N}_0}$ is known to be predictable 
with respect to a coarser filtration  $\{\mathcal{F}_k\}_{k\in\mathbb{N}_{-1}}$,
we could have considered this coarser filtration and the corresponding time series 
$\{\bm{Z}_k\}_{k\in \mathbb{N}_0}$
from the very beginning, and impose conditions (A1) and (A2) in terms of 
$\{\bm{Z}_k\}_{k\in \mathbb{N}_0}$ and $\{\mathcal{F}_k\}_{k\in\mathbb{N}_{-1}}$.
In fact, the coarser the filtration $\{\mathcal{F}_k\}_{k\in\mathbb{N}_{-1}}$, the weaker the
conditions. 

On the other hand, if the parameter sequence $\{\theta_k\}_{k \in \mathbb{N}_0}$ is known to be predictable 
with respect to a finer filtration  $\{\mathcal{F}_k\}_{k\in\mathbb{N}_{-1}}$
(i.e., $\sigma(\bm{Z}_k)=\mathcal{F}_k \supset \sigma(\bm{X}_k)$, $k\in\mathbb{N}_0$), then 
the key relation (\ref{eq:D}) will most likely not hold since the expression on the left hand side is 
$\sigma(\bm{X}_k)$-measurable and the expression on the right hand side is $\mathcal{F}_k $-measurable.
However, in such situation  (\ref{eq:D}) may still hold  with some (small) error $\eta_k$,
we address this issue in Remark \ref{rem:close_parameter} below.
Intuitively, the information of what we observe should match (or, at least, not be less than) 
the information of what we want to track.


\end{remark}

\begin{remark}
\label{rem:parametrized}
Consider one particular case of our general setting.
At time $n\in \mathbb{N}_0$,  we observe $\bm{X}_n=(X_0, X_1, \dots, X_n)$ 
such that 
\begin{equation}
\label{model2}
X_0\sim \mathbb{P}_{\theta_0}, \;\; X_k|\bm{X}_{k-1} 
\sim \mathbb{P}_{\theta_k}(\cdot|\bm{X}_{k-1}),\qquad 
\theta_k \in \Theta\subset \mathbb{R}^d, \;\; k \in \mathbb{N}.
\end{equation}
The model in this case is 
$\mathcal{P}_k = \mathcal{P}_k(\Theta)=\big\{\mathbb{P}_{\theta}(\cdot|\bm{x}_{k-1}): 
\,  \theta\in \Theta, \, \bm{x}_{k-1}\in \mathcal{X}^{k-1}\big\}$ and the operator 
$A_k(\mathbb{P}_{\theta_k}(\cdot|\bm{x}_{k-1}))=\theta_k$.
This is a convenient formulation when the time series model is parametrized 
by a time-varying parameter which we would like to recover by using an online tracking algorithm.
Also in this case we can actually allow the parameter $\theta_k$ 
to depend on the past of the time series, i.e., $\theta_k=\theta_k(\bm{X}_{k-1})$ so that the sequence
$\{\theta_k, k \in \mathbb{N}_0\}$ is predictable with respect to the filtration
$\{\mathcal{F}_k\}_{k\in\mathbb{N}_{-1}}$.
\end{remark}

Condition (A1) can be reformulated as condition (\~A1) below, which gives some intuition about 
the role of the average gain $g_k$ defined in (A1) and which may, in certain situations, be easier to verify.
\begin{itemize}
\item[(\~A1)]
For $k \in \mathbb{N}_0$, the average gain $g_k(\hat{\theta}_k,\theta_k |\bm{X}_{k-1})$ defined in (A1) satisfies, 
almost surely, the following conditions: there exist random variables $\Lambda_{(1)}(\bm{X}_{k-1})$,  
$\Lambda_{(d)}(\bm{X}_{k-1})$ and constants $0<\lambda_1\le \lambda_2<\infty$, $L>0$ 
such that 
$\|g_k(\hat{\theta}_k,\theta_k |\bm{X}_{k-1})\|\le L \|\hat{\theta}_k-\theta_k\|$ and
\[
\Lambda_{(1)}(\bm{X}_{k-1}) \|\hat{\theta}_k-\theta_k\|^2 
\le 
-(\hat{\theta}_k-\theta_k)^Tg_k(\hat{\theta}_k,\theta_k |\bm{X}_{k-1})\le
\Lambda_{(d)}(\bm{X}_{k-1}) \|\hat{\theta}_k-\theta_k\|^2,
\]
with $\lambda_1 \le \EE [\Lambda_{(1)}(\bm{M}_k)|\mathcal{F}_{k-2}]$ and 
$\lambda_1 \le\Lambda_{(d)}(\bm{M}_k) \le \lambda_2$.
\end{itemize}
In view of the lemma below, if (A1) holds, then (\~A1) will also hold (and vice versa);
the values of the constants $\lambda_1$ and $\lambda_2$ appearing in the assumptions 
are different, though.
The proof of this lemma is deferred to Section~\ref{sec:proofs}.
\begin{lemma} 
\label{lemma1}
Let $x,y\in \mathbb{R}^d$.
If there exists a symmetric positive definite matrix $\bm{M}$ such that $y=\bm{M}x$ 
and $0<\lambda_1 \le \lambda_{(1)}(\bm{M}) \le \lambda_{(d)}(\bm{M}) \le \lambda_2 <\infty$
for some $\lambda_1,\lambda_2\in\mathbb{R}$, 
then $0<  \lambda_1' \|x\|^2 \le \left\langle x,y\right\rangle \le \lambda_2' \|x\|^2 
< \infty$ and $\|y\|\le C \|x\|$  for some $\lambda_1',\lambda_2', C \in \mathbb{R}$ 
(depending only on $\lambda_1,\lambda_2$) such that $0<\lambda_1' \le \lambda_2' <\infty$ and $C>0$.

Conversely, if $0<  \lambda_1' \|x\|^2 \le \left\langle x,y\right\rangle \le \lambda_2' \|x\|^2 
< \infty$ and $\|y\|\le C \|x\|$  for some $\lambda_1',\lambda_2', C \in \mathbb{R}$
 such that $0<\lambda_1' \le \lambda_2' <\infty$ and $C>0$, then 
there exists a symmetric positive definite matrix $\bm{M}$ such that $y=\bm{M}x$ 
and $0<\lambda_1 \le \lambda_{(1)}(\bm{M}) \le \lambda_{(d)}(\bm{M}) \le \lambda_2 <\infty$
for some constants $\lambda_1,\lambda_2\in\mathbb{R}$
depending only on $\lambda_1', \lambda_2'$ and $C$.
\end{lemma}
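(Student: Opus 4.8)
The plan is to prove the two directions of Lemma~\ref{lemma1} separately, in each case handing off most of the work to the symmetrization/averaging trick that turns a bilinear-form estimate into a matrix equation.

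First I would treat the ``easy'' direction: given a symmetric positive definite $\bm{M}$ with $y=\bm{M}x$ and $\lambda_1 \le \Lambda_{(1)}(\bm{M}) \le \Lambda_{(d)}(\bm{M}) \le \lambda_2$, the bilinear-form bounds are immediate from the variational characterization of the extreme eigenvalues of a symmetric matrix: $\langle x,y\rangle = x^T\bm{M}x \in [\Lambda_{(1)}(\bm{M})\|x\|^2, \Lambda_{(d)}(\bm{M})\|x\|^2] \subseteq [\lambda_1\|x\|^2, \lambda_2\|x\|^2]$, so one can take $\lambda_1'=\lambda_1$, $\lambda_2'=\lambda_2$; and $\|y\| = \|\bm{M}x\| \le \|\bm{M}\|_2\,\|x\| = \Lambda_{(d)}(\bm{M})\|x\| \le \lambda_2\|x\|$, so $C=\lambda_2$. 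One should note the degenerate case $x=0$, which forces $y=\bm{M}x=0$ and makes all inequalities trivial; for $x\neq 0$ the strict positivity $\lambda_1' > 0$ is clear.

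The converse is the substantive direction. Here I am \emph{given} a vector pair $(x,y)$ satisfying $\lambda_1'\|x\|^2 \le \langle x,y\rangle \le \lambda_2'\|x\|^2$ and $\|y\| \le C\|x\|$, and I must \emph{construct} a symmetric positive definite $\bm{M}$ with $y=\bm{M}x$ and controlled eigenvalues. The natural construction is to look for $\bm{M}$ of the form $\bm{M} = \alpha \bm{I} + \beta\, \tfrac{1}{\|x\|^2}(x y^T + y x^T) + \delta\,\tfrac{1}{\|x\|^4}\,(\text{rank-one corrections})$, i.e. to build $\bm{M}$ acting as a scalar on the orthogonal complement of $\mathrm{span}\{x,y\}$ and handle the (at most) $2$-dimensional subspace $V=\mathrm{span}\{x,y\}$ explicitly. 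Concretely: decompose $y = c x + z$ with $c = \langle x,y\rangle/\|x\|^2 \in [\lambda_1',\lambda_2']$ and $z \perp x$; note $\|z\| \le \|y\| \le C\|x\|$. On $V$ I need a symmetric positive definite operator sending $x\mapsto y = cx+z$; writing it in the (possibly non-orthonormal, so better orthonormalized) basis, this reduces to choosing a symmetric $2\times 2$ matrix $\begin{pmatrix} a & b \\ b & e\end{pmatrix}$ with prescribed first column $\propto (c, \|z\|/\|x\|)$ and free entry $e$ — a genuine degree of freedom I can tune so that both eigenvalues lie in an interval $[\lambda_1,\lambda_2]$ depending only on $\lambda_1',\lambda_2',C$. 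Then extend by $\lambda$ (some value in that interval, e.g.\ $\tfrac12(\lambda_1+\lambda_2)$) times the identity on $V^\perp$, so the global eigenvalue bounds are the max/min over the $2\times2$ block and $\{\lambda\}$.

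The main obstacle is the explicit eigenvalue bookkeeping for the $2\times 2$ block: one must check that the free parameter $e$ can always be chosen (uniformly in the data, using only $c \in [\lambda_1',\lambda_2']$ and $\|z\|/\|x\| \le C$) so that the block is positive definite with both eigenvalues bounded away from $0$ and from $\infty$ by constants depending only on $\lambda_1',\lambda_2',C$. The trace/determinant conditions $a+e>0$, $ae-b^2>0$ together with bounds on $a+e$ and $ae-b^2$ translate into an elementary but slightly fiddly inequality in $e$; taking $e$ of order $C^2/\lambda_1'$ or so should work. The degenerate sub-case $z=0$ (i.e.\ $y \parallel x$) is easier and can be disposed of first by simply scaling the identity. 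Also worth a remark: in the application $x = \hat\theta_k - \theta_k$ and $y = -g_k$, so one applies the lemma pointwise in $\bm{X}_{k-1}$ and then observes that the constructed $\bm{M}_k$ inherits $\mathcal{F}_{k-1}$-measurability because the construction is an explicit (measurable) function of $x$ and $y$; the conditional expectation bound on $\Lambda_{(1)}$ in~(\ref{eq:Lambda}) / (\~A1) then transfers directly since $\lambda_1 \le \Lambda_{(1)}(\bm{M}_k)$ pointwise already gives it. I would state this measurability observation at the end rather than inside the linear-algebra lemma itself.
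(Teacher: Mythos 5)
Your proof is correct; the forward direction coincides with the paper's (variational characterization of eigenvalues plus $\|\bm{M}x\|\le \Lambda_{(d)}(\bm{M})\|x\|$), but your converse uses a genuinely different construction. The paper takes an orthonormal basis $\{e_1,e_2\}$ of $V=\mathrm{span}\{x,y\}$ with $e_1=x/\|x\|$, writes $y=\alpha e_1+\beta e_2$ with $\beta>0$, and then \emph{rotates} this basis by an angle $\theta$ with $\tan\theta=\alpha/(2\beta)$, chosen so that both $x$ and $y$ have positive coordinates in the rotated basis with ratios $\alpha_y/\alpha_x$ and $\beta_y/\beta_x$ bounded above and below in terms of $\lambda_1',\lambda_2',C$; the matrix is then taken \emph{diagonal} in the rotated basis and extended by the identity on $V^\perp$. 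You instead stay in the basis $\{x/\|x\|,\,z/\|z\|\}$, where $y=cx+z$ with $c=\langle x,y\rangle/\|x\|^2\in[\lambda_1',\lambda_2']$ and $z\perp x$, $\|z\|\le C\|x\|$, and prescribe the symmetric $2\times2$ block directly: its first column is forced to be $(c,\|z\|/\|x\|)^T$ and the free $(2,2)$ entry $e$ is tuned. This avoids the rotation and its slightly fiddly ratio estimates, at the price of the trace/determinant bookkeeping you flagged; that step is indeed routine and uniform in the data, e.g.\ $e=\lambda_1'+2C^2/\lambda_1'$ gives determinant $ce-(\|z\|/\|x\|)^2\ge\lambda_1'^2+C^2$ and trace at most $\lambda_2'+\lambda_1'+2C^2/\lambda_1'$, so the block's smallest eigenvalue is at least the determinant divided by the trace and its largest is at most the trace, both constants depending only on $\lambda_1',\lambda_2',C$, exactly as the lemma requires (and extending by a fixed scalar in that range on $V^\perp$ preserves the bounds). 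The degenerate cases $x=0$ and $y\parallel x$ are treated the same way in both arguments, and your closing observation that the construction is an explicit measurable function of $(x,y)$ — needed when the lemma is applied with $x=\hat{\theta}_k-\theta_k$, $y=-g_k$ to pass between (A1) and (\~A1) — is a point the paper leaves implicit and applies equally to both constructions.
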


\section{Main results}
\label{sec:main_result}

We start with a lemma which we will need in the proof of the main result.
Heuristically, since the gain vector $G_k(\hat{\theta}_k,\bm{X}_k)$ 
moves, on average, $\hat{\theta}_k$ towards $\theta_k$ and the sequence $\theta_k \in \Theta$
is bounded  (since $\Theta$ is compact), the resulting estimating sequence $\hat{\theta}_k$ should also be well-behaved.
The following lemma states that the second moment of $\hat{\theta}_k$ 
is uniformly bounded in $k\in\mathbb{N}_0$ for sufficiently small $\gamma_k$'s.
\begin{lemma}
\label{lemma_bound}
Let assumptions (A1) and (A2) hold. Then 
for sufficiently small $\gamma_k$ there exists a constant $\bar{C}_\Theta$ such that
\[
\EE  \|\hat{\theta}_k\|^2 \le \bar{C}_\Theta, \qquad k \in
\mathbb{N}_0.
\]
\end{lemma}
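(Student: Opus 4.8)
Since $\Theta$ is compact, $\|\hat{\theta}_k\|^2\le 2\|\hat{\theta}_k-\theta_k\|^2+2C_\Theta$, so it suffices to bound $a_k:=\EE\|\hat{\theta}_k-\theta_k\|^2$ uniformly in $k$. As in the computation leading to (\ref{bound_G_k}), assumptions (A1) and (A2) give $\EE\|G_k\|^2\le 2\EE\|g_k\|^2+2C_g\le 2\lambda_2^2 a_k+2C_g$; in particular $a_0\le 4C_\Theta<\infty$ and, reading off $\hat{\theta}_1$ from the update (\ref{eq:algorithm_main}), $a_1<\infty$, so it remains to propagate a finite, $k$-independent bound.

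The plan is to square (\ref{eq:algorithm_main}) and take conditional expectations. The obstruction is that (\ref{eq:D})--(\ref{eq:Lambda}) control $\bm{M}_k$ from below only \emph{conditionally on $\FF_{k-2}$}, whereas $\bm{M}_k$ and the current error $\hat{\theta}_k-\theta_k$ are merely $\FF_{k-1}$-measurable, so the restoring term cannot be extracted at the level of $\FF_{k-1}$. I would therefore work over two consecutive steps. Substituting $\hat{\theta}_k=\hat{\theta}_{k-1}+\gamma_{k-1}G_{k-1}$ into the update,
\[
\hat{\theta}_{k+1}-\theta_{k-1}=\psi_{k-1}+\gamma_{k-1}G_{k-1}+\gamma_k G_k,\qquad \psi_{k-1}:=\hat{\theta}_{k-1}-\theta_{k-1},
\]
where $\psi_{k-1}$ is $\FF_{k-2}$-measurable (both $\hat{\theta}_{k-1}$ and $\theta_{k-1}$ are). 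Expanding $\|\hat{\theta}_{k+1}-\theta_{k-1}\|^2$ and taking $\EE[\,\cdot\,|\FF_{k-2}]$: the term $2\gamma_{k-1}\psi_{k-1}^T G_{k-1}$ has conditional mean $-2\gamma_{k-1}\psi_{k-1}^T\bm{M}_{k-1}\psi_{k-1}\le 0$ and can be dropped; the term $2\gamma_k\psi_{k-1}^T G_k$ has conditional mean $-2\gamma_k\psi_{k-1}^T\EE[\bm{M}_k(\hat{\theta}_k-\theta_k)\,|\,\FF_{k-2}]$, and inserting $\hat{\theta}_k-\theta_k=\psi_{k-1}+\gamma_{k-1}G_{k-1}+(\theta_{k-1}-\theta_k)$ isolates the main piece $-2\gamma_k\,\psi_{k-1}^T\EE[\bm{M}_k|\FF_{k-2}]\,\psi_{k-1}\le -2\gamma_k\lambda_1\|\psi_{k-1}\|^2$ --- here the $\FF_{k-2}$-measurability of $\psi_{k-1}$ is precisely what allows it to be pulled through $\EE[\,\cdot\,|\FF_{k-2}]$ so that $\EE[\Lambda_{(1)}(\bm{M}_k)|\FF_{k-2}]\ge\lambda_1$ applies. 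The remaining contributions --- the cross terms of $\psi_{k-1}$ with $\gamma_{k-1}\bm{M}_kG_{k-1}$ and with $\bm{M}_k(\theta_{k-1}-\theta_k)$, and the square term $\|\gamma_{k-1}G_{k-1}+\gamma_k G_k\|^2$ --- are handled by Cauchy--Schwarz and Young's inequality, the crude estimate $\|\theta_i-\theta_j\|\le 2\sqrt{C_\Theta}$, assumption (A2) through $\EE\|G_j\|^2\le 2\lambda_2^2 a_j+2C_g$, and the bound $\Lambda_{(d)}(\bm{M}_k)\le\lambda_2$; each $\|\psi_{k-1}\|^2$-piece is absorbed into the restoring term, at the price of shrinking $\lambda_1$ and (for the drift cross term) of a step-size-dependent additive constant.

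Taking full expectations and then comparing $\EE\|\hat{\theta}_{k+1}-\theta_{k-1}\|^2$ with $a_{k+1}=\EE\|\hat{\theta}_{k+1}-\theta_{k+1}\|^2$ (which differ only by the bounded two-step drift $\theta_{k-1}-\theta_{k+1}$), one arrives, once $\Gamma$ is small enough that the $O(\gamma_k^2)$ corrections do not overwhelm the restoring term, at a second-order recursive inequality of the form
\[
a_{k+1}\le\Big(1-\tfrac{\lambda_1}{2}\gamma_k\Big)a_{k-1}+C_1\gamma_k^2\,a_k+C_2,
\]
with $C_1,C_2$ depending only on $\lambda_1,\lambda_2,C_g,C_\Theta$ (and, through the handling of the drift, on the step-size regime). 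Since for $\Gamma$ small the two nonnegative coefficients on the right sum to at most $1-\tfrac{\lambda_1}{4}\gamma_k<1$, a discrete Gr\"onwall argument finishes the proof: e.g.\ $M_k:=\max\{a_{k-1},a_k\}$ then satisfies $M_{k+1}\le\max\{M_k,\ (1-\tfrac{\lambda_1}{4}\gamma_k)M_k+C_2\}$, whence $a_k\le M_k\le\bar{C}_\Theta$ for all $k\in\mathbb{N}_0$, and therefore $\EE\|\hat{\theta}_k\|^2\le 2\bar{C}_\Theta+2C_\Theta$. The main obstacle is exactly the measurability mismatch described above: it forces the two-step bookkeeping, and the delicate point is to verify that, after absorbing all perturbations, the coefficient of $a_k$ stays of order $\gamma_k^2$ (rather than order $1$), since this is what makes the resulting two-term recursion contractive for small step size.
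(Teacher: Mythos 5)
Your two-step conditioning is a legitimate (and careful) way to exploit the fact that (A1) only bounds $\EE[\Lambda_{(1)}(\bm{M}_k)\mid\FF_{k-2}]$ from below: since $\psi_{k-1}$ is $\FF_{k-2}$-measurable, $\psi_{k-1}^T\EE[\bm{M}_k\mid\FF_{k-2}]\psi_{k-1}\ge\lambda_1\|\psi_{k-1}\|^2$ indeed holds. The gap is in the endgame. Because the lemma allows an arbitrary predictable drift inside the compact set $\Theta$, the passage from $\EE\|\hat\theta_{k+1}-\theta_{k-1}\|^2$ back to $a_{k+1}=\EE\|\hat\theta_{k+1}-\theta_{k+1}\|^2$ necessarily injects terms of order one: $\EE\|\theta_{k-1}-\theta_{k+1}\|^2$ can be as large as $4C_\Theta$ and the cross term is of order $\sqrt{a}$, and neither can be made $O(\gamma_k)$ (Young with weight $\asymp\gamma_k$ produces an additive $\asymp C_\Theta/\gamma_k$; Young with constant weight destroys the $1-O(\gamma_k)$ contraction). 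So in your recursion $a_{k+1}\le(1-\tfrac{\lambda_1}{2}\gamma_k)a_{k-1}+C_1\gamma_k^2a_k+C_2$ the constant $C_2$ is genuinely of order one while the contraction is only of order $\gamma_k$; its equilibrium is of order $C_2/(\lambda_1\gamma_k)$. Correspondingly, the stated Gr\"onwall step fails: from $M_{k+1}\le\max\{M_k,\,(1-\tfrac{\lambda_1}{4}\gamma_k)M_k+C_2\}$ the second entry exceeds $M_k$ whenever $M_k\le 4C_2/(\lambda_1\gamma_k)$, so $M_k$ may grow like $1/\gamma_k$, and no $k$-independent $\bar{C}_\Theta$ follows once $\gamma_k\to0$ --- which is exactly the regime the lemma is meant for (the paper notes that ``sufficiently small $\gamma_k$'' is typically realized by $\gamma_k\to0$). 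In short: a moving-target contraction with restoring strength $O(\gamma_k)$ cannot absorb an $O(1)$ per-step drift of the target.

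The paper's proof avoids this by never measuring the error against the moving parameter: it bounds $\EE\|\hat\theta_k\|^2$ itself, i.e.\ the distance to a fixed reference, so the drift of $\theta_k$ never enters additively. The argument is a one-step dichotomy: if $\EE\|\hat\theta_k\|^2\le KC_\Theta$, then by the gain-moment bound (\ref{bound_G_k}) one step can increase the second moment only by a bounded amount; if $\EE\|\hat\theta_k\|^2>KC_\Theta$, then $\|\theta_k\|$ is relatively negligible and the cross term $-2\gamma_k\EE[\hat\theta_k^T\bm{M}_k(\hat\theta_k-\theta_k)]$ is dominated by $-2\gamma_k(\lambda_1-\lambda_2/\sqrt{K}-O(\gamma_k))\EE\|\hat\theta_k\|^2\le0$ for $K$ large and $\gamma_k$ small, so the second moment does not increase. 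Induction then gives the uniform bound. If you want to keep your (nicer) treatment of the $\FF_{k-2}$-conditioning, the fix is to run your two-step expansion on $\EE\|\hat\theta_{k+1}\|^2$ (or on the distance to $\Theta$) rather than on $a_k$, reinstating the paper's large/small dichotomy so that the restoring term only needs to act when $\|\hat\theta_k\|$ is large compared to $\sqrt{C_\Theta}$; bounding $a_k$ then follows trivially from compactness of $\Theta$, rather than the other way around.
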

The proof of this lemma is given in Section~\ref{sec:proofs}.
In fact, it is enough to assume that $\gamma_k$ is sufficiently small for 
all $k \ge N$ for some fixed $N\in\mathbb{N}$. This is the case if $\gamma_k \to 0$ as $k \to \infty$,
which is typically assumed. 
This lemma will be used in the proof of the main theorem below.
From now on we assume that the sequence $\gamma_k$ is such that Lemma~\ref{lemma_bound} holds.

The following theorem is our main result, it provides a non-asymptotic upper bound on the quality 
of the tracking algorithm  (\ref{eq:algorithm_main}) in terms of of the algorithm step sequence
$\{\gamma_i, i \in \mathbb{N}_0\}$ and oscillation of the process to track 
$\{\theta_i, i\in \mathbb{N}_0\}$ between  arbitrary time moments $k_0,k \in \mathbb{N}_0$, $k_0 \le k$.

\begin{theorem}
\label{theo:bound}
Let assumptions (A1) and (A2) hold, the tracking sequence $\hat{\theta}_k$ be defined by (\ref{eq:algorithm_main})
and $\delta_k = \delta_k(\bm{X}_{k-1}) = \hat{\theta}_k - \theta_k$, $k\in \mathbb{N}_0$.
Then for any $k_0, k \in \mathbb{N}_0$ and sequence $\{\gamma_k, k\in \mathbb{N}_0\}$ 
(satisfying the conditions of Lemma \ref{lemma_bound}) such that  $k_0\le k$ and 
$\gamma_i\lambda_2\le 1$ for all $i \in \mathbb{N}_{k_0,k}$, the following relation holds:
\begin{equation}
\label{eq:boundE}
\EE \|\delta_{k+1}\|\le
C_1\exp\Big\{-\frac{\lambda_1}{2}\!\!\sum_{i=k_0}^k \!\!\!\gamma_i\Big\}+ 
C_2 \Big[\sum_{i=k_0}^k \gamma_i^2\Big]^{1/2} \!\!+ C_3\max_{k_0\le i\le k} \EE \|\theta_{i+1}-\theta_{k_0}\|,
\end{equation}
where $C_1 = \sqrt{2}(\bar{C}_\Theta+C_\Theta)^{1/2}$, 
$C_2=C_g^{1/2}(1+\lambda_2/\lambda_1)$,
$C_3=(1+\lambda_2/\lambda_1)$, constants $\lambda_1,\lambda_2, C_g$ are from assumptions (A1) and (A2),
$C_\Theta$ is defined by (\ref{C_Theta}) and $\bar{C}_\Theta$ is from Lemma~\ref{lemma_bound}.
\end{theorem}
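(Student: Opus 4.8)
The plan is to derive a recursion for $\delta_{k+1}$ from the algorithm (\ref{eq:algorithm_main}) and the decomposition $G_k = g_k + (G_k - g_k)$, then iterate it. Writing $\delta_{k+1} = \hat\theta_{k+1} - \theta_{k+1} = \hat\theta_k + \gamma_k G_k - \theta_{k+1}$, and inserting $\pm \theta_k$ and $\pm \gamma_k g_k$, I get
\[
\delta_{k+1} = (\bm{I} - \gamma_k \bm{M}_k)\delta_k + \gamma_k(G_k - g_k) + (\theta_k - \theta_{k+1}),
\]
using (\ref{eq:D}) to replace $\gamma_k g_k$ by $-\gamma_k \bm{M}_k \delta_k$. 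Iterating from $k_0$ to $k$ yields
\[
\delta_{k+1} = \Big(\prod_{i=k_0}^{k}(\bm{I}-\gamma_i\bm{M}_i)\Big)\delta_{k_0} + \sum_{i=k_0}^{k} \Big(\prod_{j=i+1}^{k}(\bm{I}-\gamma_j\bm{M}_j)\Big)\gamma_i(G_i - g_i) + \sum_{i=k_0}^{k}\Big(\prod_{j=i+1}^{k}(\bm{I}-\gamma_j\bm{M}_j)\Big)(\theta_i - \theta_{i+1}).
\]
So there are three terms to bound in $L_1$: an initial/homogeneous term, a noise term, and a drift term.

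For the homogeneous term, the key is to control $\EE\big\|\prod_{i=k_0}^{k}(\bm{I}-\gamma_i\bm{M}_i)\delta_{k_0}\big\|$. Since $\gamma_i\lambda_2 \le 1$, each $\bm{I}-\gamma_i\bm{M}_i$ has operator norm at most $1 - \gamma_i\Lambda_{(1)}(\bm{M}_i) \le 1$, so the product is a contraction pathwise; but to get the $\exp\{-\tfrac{\lambda_1}{2}\sum\gamma_i\}$ rate I need to exploit the conditional lower bound $\EE[\Lambda_{(1)}(\bm{M}_i)\mid\mathcal{F}_{i-2}] \ge \lambda_1$ from (\ref{eq:Lambda}). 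The natural device is to take $L_1$ norms and peel off the outermost factor using the tower property: condition on $\mathcal{F}_{k-1}$ (or $\mathcal{F}_{k-2}$) so that $\|\bm{I}-\gamma_k\bm{M}_k\| \le 1-\gamma_k\Lambda_{(1)}(\bm{M}_k)$ is pulled out, take conditional expectation to replace $\Lambda_{(1)}(\bm{M}_k)$ by its lower bound $\lambda_1$, and recurse — using $1-\gamma_i\lambda_1 \le e^{-\gamma_i\lambda_1}$, and a factor $\sqrt{2}$ somewhere (perhaps to pass between $\lambda_1$ and $\lambda_1/2$, or to handle the mismatch of $\sigma$-algebras $\mathcal{F}_{i-1}$ vs.\ $\mathcal{F}_{i-2}$ by an extra peeling step). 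Then $\EE\|\delta_{k_0}\| \le (\EE\|\delta_{k_0}\|^2)^{1/2} \le (2\bar{C}_\Theta + 2C_\Theta)^{1/2}$ via Lemma~\ref{lemma_bound} and (\ref{C_Theta}), giving the constant $C_1$.

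For the noise term, the products $\prod_{j=i+1}^{k}(\bm{I}-\gamma_j\bm{M}_j)$ are pathwise contractions with norm $\le \prod_{j>i}(1-\gamma_j\lambda_1^{\mathrm{path}})$; combined with $\EE\|G_i - g_i\| \le (\EE\|G_i-g_i\|^2)^{1/2} \le C_g^{1/2}$ from (A2), each summand is bounded by $\gamma_i C_g^{1/2}\,\EE\prod_{j=i+1}^k(1-\gamma_j\Lambda_{(1)}(\bm{M}_j))$, which again by the tower-property peeling is $\le \gamma_i C_g^{1/2}\exp\{-\lambda_1\sum_{j>i}\gamma_j\}$ (up to constants). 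Summing a geometric-like series $\sum_i \gamma_i e^{-\lambda_1\sum_{j>i}\gamma_j}$ and comparing it with $(1+\lambda_2/\lambda_1)(\sum_i\gamma_i^2)^{1/2}$ — here one uses Cauchy–Schwarz, $\sum\gamma_i e^{-c\sum_{j>i}\gamma_j} \lesssim \lambda_1^{-1}$ and a $\gamma_i\lambda_2\le 1$ type estimate to trade a sum of $\gamma_i$ for a sum of $\gamma_i^2$ — produces the constant $C_2 = C_g^{1/2}(1+\lambda_2/\lambda_1)$. The drift term is handled the same way but instead of $G_i-g_i$ one uses the telescoping structure of $\theta_i-\theta_{i+1}$: an Abel summation (summation by parts) against the contraction factors rewrites $\sum_i (\prod_{j>i}(\bm{I}-\gamma_j\bm{M}_j))(\theta_i-\theta_{i+1})$ in terms of $\theta_{i+1}-\theta_{k_0}$ and the increments of the contraction products, whose total variation is again controlled by the same geometric bound; taking expectations and pulling out $\max_{k_0\le i\le k}\EE\|\theta_{i+1}-\theta_{k_0}\|$ gives $C_3 = 1+\lambda_2/\lambda_1$.

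The main obstacle I expect is the homogeneous (and noise) term rate: turning the merely pathwise, possibly very weak contraction (one only knows $\Lambda_{(1)}(\bm{M}_i)>0$ a.s., with the quantitative lower bound $\lambda_1$ available only in conditional expectation given $\mathcal{F}_{i-2}$) into a genuine exponential decay rate in $L_1$. This forces an induction where at each step one conditions on the right $\sigma$-algebra, bounds the operator norm of the current factor by $1-\gamma_i\Lambda_{(1)}(\bm{M}_i)$, and only then takes conditional expectation to insert $\lambda_1$; the two-step lag ($\mathcal{F}_{i-2}$ rather than $\mathcal{F}_{i-1}$) in (\ref{eq:Lambda}) is presumably what costs the factor $2$ in the exponent and the $\sqrt2$ in $C_1$ — one likely groups the factors in pairs or bounds every other factor trivially by $1$. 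Getting the bookkeeping of these nested conditional expectations correct, while simultaneously keeping the contraction factors attached to the noise and drift summands so that the geometric sums converge, is the delicate part; everything else is Cauchy–Schwarz, $1-x\le e^{-x}$, summation by parts, and the a priori bound from Lemma~\ref{lemma_bound}.
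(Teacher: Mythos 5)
Your recursion for $\delta_{k+1}$, the iterated expansion, the peeling of the products via the tower property with $\EE[\Lambda_{(1)}(\bm{M}_i)\mid\mathcal{F}_{i-2}]\ge\lambda_1$, and the Abel summation for the drift part all match the paper. But there is a genuine gap in your treatment of the noise term. Bounding each summand by $\gamma_i\,\EE\|G_i-g_i\|\cdot\EE$-type contraction factors and then summing the ``geometric-like series'' $\sum_i\gamma_i e^{-\lambda_1\sum_{j>i}\gamma_j}$ cannot produce the $(\sum_i\gamma_i^2)^{1/2}$ term: that series is of order $1/\lambda_1$ (e.g.\ for constant steps $\gamma_i=\gamma$ it is $\approx(1-e^{-\lambda_1\gamma})^{-1}\gamma\approx\lambda_1^{-1}$), a non-vanishing constant, while $(\sum_i\gamma_i^2)^{1/2}$ can be arbitrarily small precisely in the regimes (small $\gamma$, $\sum\gamma_i$ large) where the theorem is used. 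There is no Cauchy--Schwarz or ``$\gamma_i\lambda_2\le1$'' trick that trades $\sum\gamma_i$ for $\sum\gamma_i^2$ here; a term-by-term first-moment bound discards the martingale cancellation that is the whole point. One also cannot simply square the noise sum and invoke orthogonality, because the matrix weights $\prod_{j=i+1}^k(\bm{I}-\gamma_j\bm{M}_j)$ are not $\mathcal{F}_{i-1}$-measurable (they depend on the future), so the cross terms do not vanish. The paper's fix is exactly to apply the Abel transformation to the \emph{entire} sum $\sum_i\bigl[\prod_{j=i+1}^k(\bm{I}-\gamma_j\bm{M}_j)\bigr](\Delta\theta_i+\gamma_i D_i)$, not just to the drift part: this makes the genuine martingale partial sums $A_i=\sum_{j=k_0}^i\gamma_j D_j$ appear, for which orthogonality of martingale increments gives $\EE\|A_i\|^2=\sum_j\gamma_j^2\EE\|D_j\|^2\le C_g\sum_j\gamma_j^2$, and the random weights $\gamma_{i+1}\bm{M}_{i+1}\prod_{j\ge i+2}(\bm{I}-\gamma_j\bm{M}_j)$ are then controlled by the tower-property peeling together with the deterministic identity $\sum_{i}\lambda_1\gamma_{i+1}\prod_{j\ge i+2}(1-\gamma_j\lambda_1)\le1$, which is what yields the factor $1+\lambda_2/\lambda_1$ in $C_2$ and $C_3$.

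A minor misattribution, not fatal: the $\sqrt2$ in $C_1$ and the $\lambda_1/2$ in the exponent do not come from the two-step lag in (\ref{eq:Lambda}) (no pairing of factors or bounding every other factor by $1$ is needed — all factors of index at most $k-1$ are already $\mathcal{F}_{k-2}$-measurable, so the peel-off proceeds without loss). They come from Cauchy--Schwarz: $\EE\bigl[\|\delta_{k_0}\|\prod_i\|\bm{I}-\gamma_i\bm{M}_i\|\bigr]\le\bigl(\EE\|\delta_{k_0}\|^2\bigr)^{1/2}\bigl(\EE\prod_i\|\bm{I}-\gamma_i\bm{M}_i\|^2\bigr)^{1/2}$, with $\EE\|\delta_{k_0}\|^2\le2(C_\Theta+\bar{C}_\Theta)$ giving the $\sqrt2$, and the square root of $\prod_i(1-\gamma_i\lambda_1)$ giving the exponent $\lambda_1/2$.
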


\begin{remark}
\label{rem:norms}
By using (\ref{eq:boundE}), one can derive a bound for $\EE \|\delta_{k+1}\|_p$, with $p\ge 1$.
Indeed, as $\|x\|_s \le \|x\|_r \le d^{1/r-1/s}\|x\|_s$ for any $x\in \mathbb{R}^d$ and $s\ge r\ge 1$, 
we obtain that $\|\delta_{k+1}\|_p\le \|\delta_{k+1}\|_2 = \|\delta_{k+1}\|$ for $p\ge 2$ and
$\|\delta_{k+1}\|_p \le d^{1/p-1/2}\|\delta_{k+1}\|_2 \le d^{1/2}\|\delta_{k+1}\|$ 
for $1\le p< 2$.
\end{remark}

\begin{proof}
For the sake of  brevity, denote 
$\Delta_k=\Delta_k(\bm{X}_k) = \theta_k-\theta_{k+1}$,
$G_k=G(\hat{\theta}_k,\bm{X}_k)$, $g_k = g(\hat{\theta}_k,\theta_k|\bm{X}_{k-1})$ 
and $D_k = G_k - g_k$, $k\in\mathbb{N}_0$.
We have
\[
\EE [D_k|\mathcal{F}_{k-1}]=
\EE [(G_k-g_k)|\mathcal{F}_{k-1}]=
g_k-g_k=0, \quad k \in \mathbb{N}_0.
\]
It follows that $\{D_k, k\in\mathbb{N}_0\}$, is a (vector) martingale 
difference  sequence with respect to the filtration 
$\{\mathcal{F}_k\}_{k \in\mathbb{N}_{-1}}$.

Rewrite the algorithm equation (\ref{eq:algorithm_main}) as 
\[
\delta_{k+1}=\delta_k + \Delta\theta_k+\gamma_k D_k	+	\gamma_k g_k, \quad k \in \mathbb{N}_0.
\]
In view of (A1), 
the decomposition $g_k = -\bm{M}_k \delta_k$ holds almost surely, 
with an $\mathcal{F}_{k-1}$-measurable
symmetric positive definite matrix $\bm{M}_k$, so that
\[
\delta_{k+1} = \Delta\theta_k + \gamma_k D_k + (\bm{I}- \gamma_k \bm{M}_k)\delta_k, 
\quad k \in \mathbb{N}_0.
\]
By iterating the above relation, we obtain that  for any $k_0 =0,\ldots, k$
\begin{align}
\delta_{k+1} &= (\bm{I}- \gamma_k \bm{M}_k)(\bm{I}- \gamma_{k-1} 
\bm{M}_{k-1})\delta_{k-1} + \Delta\theta_k+\gamma_k D_k \notag\\
&\quad  + (\bm{I}- \gamma_k \bm{M}_k)(\Delta\theta_{k-1}+\gamma_{k-1} D_{k-1}) \notag\\ 
&\label{eq:recursed}
= \Big[\prod_{i=k_0}^k(\bm{I}- \gamma_i \bm{M}_i)\Big] \delta_{k_0} +
\sum_{i=k_0}^k \Big[\prod_{j=i+1}^k(\bm{I}- \gamma_j \bm{M}_j)\Big] (\Delta\theta_i+\gamma_i D_i).
\end{align}
Denote $A_i=\sum_{j=k_0}^i\gamma_j D_j$, $B_i=\sum_{j=k_0}^i\Delta\theta_j$ and $H_i =A_i+B_i$.
Applying the Abel transformation (Lemma \ref{lemma:abel}) 
to the second term of the  right hand side of (\ref{eq:recursed}) yields 
\begin{equation}
\label{abel_transform}
\sum_{i=k_0}^k \! \Big[\!\prod_{j=i+1}^k(\bm{I}-\gamma_j\bm{M}_j)\Big] (\Delta\theta_i+\gamma_i D_i)
= H_k - \sum_{i=k_0}^{k-1}\! \gamma_{i+1}\bm{M}_{i+1}
\Big[\!\prod_{j=i+2}^k(\bm{I}-\gamma_j \bm{M}_j)\Big]H_i.
\end{equation}
In particular, note that if we take $d=1$, $\bm{M}_j = \lambda_1$ and
$\Delta \theta_j=0$ for $j=k_0,\ldots, k$, $D_{k_0}=1$ and $D_j =0$ 
for $j=k_0+1, \ldots, k$, we derive that (since $0\le \gamma_j\lambda_1 \le1$ for $j=k_0, \ldots, k$) 
\begin{align}
\label{eq:bound_on_coefs}
\sum_{i=k_0}^{k-1}\lambda_1\gamma_{i+1}\prod_{j=i+2}^k(1- \gamma_j \lambda_1)
= 1- \prod_{j=k_0+1}^k (1- \gamma_j \lambda_1)
\leq 1,
\end{align}
which we will use later.

Using (\ref{abel_transform}), we can rewrite our expansion of $\delta_{k+1}$ in (\ref{eq:recursed}) as follows:
\begin{align*}
\delta_{k+1} &=  
\Big[\prod_{i=k_0}^k(\bm{I}- \gamma_i \bm{M}_i)\Big] \delta_{k_0}
+H_k - \sum_{i=k_0}^{k-1}\gamma_{i+1}\bm{M}_{i+1}
\Big[\prod_{j=i+2}^k(\bm{I}-\gamma_j\bm{M}_j)\Big]H_i.
\end{align*}

The previous display, the Minkowski inequality and the sub-multiplicative property 
of the operator norm  ($\|\bm{A} \bm{B}\|\le \|\bm{A}\|\|\bm{B}\|$) imply that 
\begin{align}
\|\delta_{k+1}\|&\leq
\|\delta_{k_0}\|\prod_{i=k_0}^k  \|\bm{I}- \gamma_i \bm{M}_i\| + \|H_k\| \notag\\
& 
\label{eq:after_triangle_lp}
\quad + \sum_{i=k_0}^{k-1}\gamma_{i+1}\|\bm{M}_{i+1}\| \|H_i\|
\prod_{j=i+2}^k \|\bm{I}-\gamma_j\bm{M}_j\|.
\end{align}

In view of  (A1) and the condition $\gamma_i\lambda_2\le 1$ for $i=k_0,\ldots, k$, 
$\gamma_i\Lambda_{(d)}(\bm{M}_i)\le \gamma_i\lambda_2<1$, $i=k_0,\ldots, k$, 
almost surely. Hence $0\le \gamma_i \Lambda_{(1)}(\bm{M}_i)\le\gamma_i\Lambda_{(d)}(\bm{M}_i)\le1$, $i=k_0,\ldots, k$,
almost surely. This, Lemma~\ref{lemma:eig}  and the fact (see (\ref{eq:Lambda}) from (A1)) that 
$0<\lambda_1 \le \EE [\Lambda_{(1)}(\bm{M}_i)|\mathcal{F}_{i-2}]$, $i=k_0,\ldots, k$, 
almost surely, imply that
\begin{align}
\EE \prod_{i=k_0}^k  & \|\bm{I}- \gamma_i \bm{M}_i\|^2
=\EE  \prod_{i=k_0}^k  \big(1- \gamma_i \Lambda_{(1)}(\bm{M}_i)\big)^2 \le
\EE  \EE \bigg[\prod_{i=k_0}^k  \big(1- \gamma_i \Lambda_{(1)}(\bm{M}_i)\big)\Big|\mathcal{F}_{k-2}\bigg]\notag\\
&=\EE \Big[ \EE \big[ \big(1-\gamma_k \Lambda_{(1)}(\bm{M}_k)\big)\big|\mathcal{F}_{k-2}\big]
\prod_{i=k_0}^{k-1}  \big(1- \gamma_i \Lambda_{(1)}(\bm{M}_i)\big) \Big] \notag\\
& \label{eq:m_i}
\le ( 1-\gamma_k \lambda_1 )\EE  \prod_{i=k_0}^{k-1}\big(I-\gamma_i \Lambda_{(1)}(\bm{M}_i)\big)
\le\ldots \le  \prod_{i=k_0}^k (1- \gamma_i \lambda_1 ).
\end{align}
By (\ref{C_Theta}) and Lemma~\ref{lemma_bound}, we have
$\EE \|\delta_{k_0}\|^2 \le 2 \big(\EE \|\theta_{k_0}\|^2
+\EE \|\hat{\theta}_{k_0}\|^2\big)  
\le 2(C_\Theta+\bar{C}_\Theta)=C^2_1$.
Using this fact, the Cauchy-Schwartz inequality, (\ref{eq:m_i})
and the elementary inequality $1+x\le e^x$, $x\in\mathbb{R}$, leads to
\begin{align}
\EE \Big[\|\delta_{k_0}\|\prod_{i=k_0}^k  \|\bm{I}- \gamma_i \bm{M}_i\|  \Big] 
&\le \Big[\EE  \|\delta_{k_0}\|^2\, \EE \prod_{i=k_0}^k \|\bm{I}- \gamma_i \bm{M}_i\|^2\Big]^{1/2} \notag\\
&\label{eq:products}
\le 
C_1\prod_{i=k_0}^k (1- \gamma_i \lambda_1 )^{1/2} \le
C_1\exp\Big\{-\frac{\lambda_1}{2}\sum_{i=k_0}^k\gamma_i\Big\}.
\end{align}


Let  $D_{kl}$ denote the $l$-th coordinate of the vector $D_k$.
Clearly, for each $l=1,\ldots, d$, $\{D_{kl}, \, k\in\mathbb{N}_0\}$ is a martingale difference
with respect to the filtration $\{\mathcal{F}_k\}_{k \in\mathbb{N}_{-1}}$.
Using  (\ref{eq:g2}) from (A2) and the fact that martingale difference terms 
are uncorrelated, we derive that for all $i=k_0, \ldots, k$
\begin{align*}
\EE \|A_i\|^2
&=
\EE \sum_{l=1}^d \Big(\sum_{j=k_0}^i \gamma_j D_{jl}\Big)^2
= \sum_{l=1}^d \sum_{j=k_0}^i \gamma_j^2 \EE  D_{jl}^2
=\sum_{j=k_0}^i \gamma_j^2 \EE  \|D_j\|^2
\le C_g \sum_{j=k_0}^k \gamma_j^2.
\end{align*}
Denote for brevity 
$\Gamma^2_{k_0,k}=\sum_{j=k_0}^k \gamma_j^2$, 
so that 
$\EE \|A_i\|^2 \le C_g \Gamma^2_{k_0,k}$, $i=k_0, \ldots, k$.

The obtained relation for $\EE \|A_i\|^2$, together with the Minkowski and H\"older inequalities, 
imply that for all  $i=k_0, \ldots, k$
\begin{equation}
\label{eq:Eh_i}
\EE \|H_i\| \le \EE\|A_i\|+ \EE \|B_i\|  \le  \big(\EE\|A_i\|^2 \big)^{1/2} + \EE\|B_i\| \le
C_g^{1/2} \Gamma_{k_0,k} + \EE\|B_i\|.
\end{equation}


Notice that $\|\bm{M}_{i+1}\|\|H_i\|$ from (\ref{eq:after_triangle_lp}) 
is $\mathcal{F}_{j}$-measurable for all $j\ge i$. 
Therefore, by (\ref{eq:Lambda}) from (A1), (\ref{eq:Eh_i}) and Lemma \ref{lemma:eig},
\begin{align}
\EE \Big[\|\bm{M}_{i+1}\| \|&H_i\|\prod_{j=i+2}^k\|\bm{I} - \gamma_j \bm{M}_j\| \Big]
=\EE  \EE \Big[\|\bm{M}_{i+1}\|\|H_i\| \prod_{j=i+2}^k \|\bm{I}- \gamma_j \bm{M}_j\| \Big| \mathcal{F}_{k-2} \Big]\notag\\
& =\EE  \Big[ \EE \big[1-\gamma_k \Lambda_{(1)}(\bm{M}_k) \big| \mathcal{F}_{k-2} \big] \|\bm{M}_{i+1}\|\|H_i\| 
\prod_{j=i+2}^{k-1} \big(1- \gamma_j \Lambda_{(1)}(\bm{M}_j)\big) \Big] \notag\\
&\le(1-\gamma_k\lambda_1) 
\EE \Big[\|\bm{M}_{i+1}\|\|H_i\| \prod_{j=i+2}^{k-1} \big(1- \gamma_j \Lambda_{(1)}(\bm{M}_j)\big)\Big] \le \ldots \notag\\
& \le \EE \big(\|\bm{M}_{i+1}\| \|H_i\|\big) \prod_{j=i+2}^k(1- \gamma_j \lambda_1)
\le \lambda_2 \EE\|H_i\| \prod_{j=i+2}^k(1- \gamma_j \lambda_1) \notag\\
&
 \label{eq:c_i}
\le \lambda_2 \Big[C_g^{1/2} \Gamma_{k_0,k}
+\EE\|B_i\| \Big] \prod_{j=i+2}^k(1- \gamma_j \lambda_1).
\end{align}

Now we take the expectation of relation (\ref{eq:after_triangle_lp}) and use 
relations (\ref{eq:products}), (\ref{eq:Eh_i}), (\ref{eq:c_i})   
and (\ref{eq:bound_on_coefs}) to derive that $\EE \|\delta_{k+1}\| $ is bounded from above by 
\begin{align*}
&\EE \Big[\|\delta_{k_0}\| \prod_{i=k_0}^k \|\bm{I}- \gamma_i \bm{M}_i\| \Big]
+\EE  \|H_k\| + \sum_{i=k_0}^{k-1}
\gamma_{i+1} \EE \Big[ \|\bm{M}_{i+1}\| \|H_i\| \prod_{j=i+2}^k \|\bm{I}-\gamma_j\bm{M}_j\| \Big]\\
&\le 
C_1\exp\Big\{-\frac{\lambda_1}{2}\sum_{i=k_0}^k\gamma_i\Big\}  
+\EE\|H_k\| + \lambda_2 \sum_{i=k_0}^{k-1} \gamma_{i+1}  
\EE\|H_i\| \prod_{j=i+2}^k (1- \gamma_j \lambda_1) 
\\
&\le C_1\exp\Big\{-\frac{\lambda_1}{2}\sum_{i=k_0}^k\gamma_i\Big\}+
\EE\|H_i\| \Big[1+\sum_{i=k_0}^{k-1}\lambda_2 \gamma_{i+1}
\prod_{j=i+2}^k \big(1-\gamma_j \lambda_1\big)\Big] \\
&\le  
C_1\exp\Big\{-\frac{\lambda_1}{2}\sum_{i=k_0}^k\gamma_i\Big\}+
\Big[C_g^{1/2} \Gamma_{k_0,k}+\max_{k_0\le i\le k} \EE \|\theta_{i+1}-\theta_{k_0}\| \Big]
\Big(1+\frac{\lambda_2}{\lambda_1}\Big),
\end{align*}
where we also used in the last bound that
$B_i=\sum_{j=k_0}^i\Delta\theta_j=\theta_{i+1}-\theta_{k_0}$, $ i=k_0, \ldots, k$, 
is a telescopic sum. This completes the proof of the the theorem.
\end{proof}

\begin{remark}
At this stage,  it may not be clear how the non-asymptotic bound from Theorem \ref{theo:bound}
can be utilized. 
The obtained result is not useful unless we assume some sort of damping of the 
oscillations of the parameter process $\{\theta_k, k \in \mathbb{N}_0\}$.
Looking ahead, in Section \ref{sec:variational_setups} we impose certain 
settings   for damping of the parameter process oscillations  (either ``stabilizing'' in 
time or increasing the observation frequency) and derive results in various asymptotic regimes 
by using our main Theorem \ref{theo:bound}.
\end{remark}

\begin{remark}
If we assume a slightly stronger version of (\ref{eq:Lambda}) in (A1),
\[
0<\lambda_1\le \Lambda_{(1)}(\bm{M}_i) \le\Lambda_{(d)}(\bm{M}_i)\le \lambda_2, 
\quad k\in\mathbb{N}_0,
\] 
almost surely, then a slightly better version of bound (\ref{eq:products}) holds:
\[
\EE  \Big[ \|\delta_{k_0}\| \prod_{i=k_0}^k  \|\bm{I}- \gamma_i \bm{M}_i\| \Big] \le 
\bar{C}_1 \prod_{i=k_0+1}^k (1- \gamma_i \lambda_1 ) \le 
\bar{C}_1\exp\Big\{-\lambda_1\sum_{i=k_0}^k\gamma_i\Big\},
\]
since $\EE \|\delta_{k_0}\| \le \EE \|\hat{\theta}_{k_0}\| + \EE \|\theta_{k_0}\| 
\le \bar{C}_\Theta^{1/2}+C_\Theta^{1/2}=\bar{C}_1$, by (\ref{C_Theta}) 
and Lemma~\ref{lemma_bound}.

We can derive a bound alternative 
to (\ref{eq:products}), which leads to slightly better constants  
in the first term of the right hand side of  (\ref{eq:boundE}).  
Indeed, $\delta_k$ is $\mathcal{F}_{k-1}$ -measurable, and, instead of (\ref{eq:products}), 
we derive
\begin{align*}
\EE  & \Big[ \|\delta_{k_0}\| \prod_{i=k_0}^k  \|\bm{I}- \gamma_i \bm{M}_i\| \Big]
=\EE  \EE \Big[\|\delta_{k_0}\| \prod_{i=k_0}^k \big(1- \gamma_i \Lambda_{(1)}(\bm{M}_i)\big)\big|\mathcal{F}_{k-2}\Big] \\
& \le\EE \Big[ \|\delta_{k_0}\|   \prod_{i=k_0}^{k-1}\big(1-\gamma_i \Lambda_{(1)}(\bm{M}_i)\big)\Big] ( 1-\gamma_k \lambda_1 )
\le\ldots \\
&\le \EE \big[\|\delta_{k_0}\|\big(1-\gamma_{k_0} \Lambda_{(1)}(\bm{M}_{k_0})\big) \big]
\prod_{i=k_0+1}^k (1- \gamma_i \lambda_1 ) 
\le \EE \big[ \|\delta_{k_0}\|\big] \prod_{i=k_0+1}^k (1- \gamma_i \lambda_1 )\\
&\le (\bar{C}_\Theta^{1/2}+C_\Theta^{1/2}) \prod_{i=k_0+1}^k (1- \gamma_i \lambda_1 ).
\end{align*}
\end{remark}

\begin{remark}
If we assume that $\gamma_i \lambda_2 \le 1$ for all $i \in \mathbb{N}_0$ and 
$\sum_{i=1}^\infty \gamma_i^2 < \infty$, then we can prove
Lemma~\ref{lemma_bound} in another way: first  take the expectation of the second power 
of the relation (\ref{eq:after_triangle_lp}) with $k_0=0$ to establish that 
$\EE \|\delta_k\|^2<C$ is uniformly bounded in $k\in\mathbb{N}_0$, and then 
$\EE \|\hat{\theta}_k\|^2 \le 2 \EE \|\delta_k\|^2+2\EE \|\theta_k\|^2 \le 2(C+C_\Theta)$, 
by (\ref{C_Theta}). 
\end{remark}

\begin{remark}
\label{rem_version_theorem}
One can try to establish a version of Theorem \ref{theo:bound} where, instead of  
(\ref{eq:Lambda}), one assumes 
\begin{equation}
\label{different_bound}
\lambda_1 \le \Lambda_{(1)} \big(\mathbb{E} [\bm{M}_k |\mathcal{F}_{k-2}]\big) \le
 \Lambda_{(d)} \big(\mathbb{E} [\bm{M}_k |\mathcal{F}_{k-2}]\big) \le \lambda_2 \quad \mbox{almost surely}.
\end{equation}
The point is that there may be situations with certain gain functions 
when (\ref{eq:Lambda}) does not hold but (\ref{different_bound}) does; 
see Remark \ref{rem_mouilines} below.
The idea of the proof would be to first introduce $\bar{\bm{M}}_i = \mathbb{E} [\bm{M}_i |\mathcal{F}_{i-2}]$,
$i\in \mathbb{N}_0$, and then, beginning with the relation (\ref{eq:recursed}), work with
the representation $\bm{M}_i =\bar{\bm{M}}_i + \bm{M}_i -\bar{\bm{M}}_i$ instead of just $\bm{M}_i$, using 
the relation (\ref{different_bound}) for $\bar{\bm{M}}_i$ and the fact that  
$\{\bm{M}_k-\bar{\bm{M}}_k, k\in\mathbb{N}_0\}$, is a (matrix) martingale 
difference  sequence with respect to the filtration 
$\{\mathcal{F}_{k-1}\}_{k \in\mathbb{N}_{-1}}$.
We will not pursue this 
here.
\end{remark}

Imposing  somewhat stronger versions of conditions (A1) and (A2) enables us to 
derive a similar non-asymptotic bound for the expectation of   
$\|\delta_{k+1}\|_p^p$ for all $p\ge 1$. Of course, the bigger $p$, the bigger the 
constants involved  in the bound.
The next theorem is a strengthened version of the previous result.
\begin{theorem} 
\label{theo:bound2}
Suppose that the conditions of Theorem \ref{theo:bound} are fulfilled.
If, in addition (to assumption (A1)),  $\Lambda_{(1)}(\bm{M}_i) \ge\lambda_1$ and
$\|G_i(\hat{\theta}_i,\bm{X}_i)\| \le \bar{G}$ (instead of (A2)) 
almost surely for all $i=k_0\ldots, k$, then for any $p\ge 1$
\begin{align}
\EE \|\delta_{k+1}\|_p^p &
\le 
C'_1 \EE \| \delta_{k_0} \|_p^p\exp\Big\{-\!p\lambda_1\!\sum_{i=k_0}^k\gamma_i\Big\}  \notag\\
& \label{eq:boundAS}
\quad\quad 
+C'_2\Big[ \sum_{i=k_0}^k \gamma_i^2\Big]^{p/2}
+ C'_3\max_{k_0\le i\le k}\EE \|\theta_{i+1}-\theta_{k_0}\|_p^p,
\end{align}
where $C'_1 = 3^{p-1}K_p^p$, $C'_2 =3^{p-1}2^p d
B_p \bar{G}^p \big(1+K_p^2\lambda_2/\lambda_1\big)^p$, 
$C'_3 = 3^{p-1}\big(1+K_p^2\lambda_2/\lambda_1\big)^p$ 
and $K_p=K_p(d)$ is the constant from Lemma \ref{lemma:eig}.
\end{theorem}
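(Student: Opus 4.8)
The plan is to mimic the proof of Theorem~\ref{theo:bound} but to carry $p$-th powers through the recursion instead of first moments, replacing the Cauchy--Schwarz/Minkowski bookkeeping by the vector Burkholder--Davis--Gundy (or Rosenthal-type) inequality for the martingale part. First I would reuse the exact decomposition leading to \eqref{eq:recursed}, the Abel transformation \eqref{abel_transform}, and the resulting representation
\[
\delta_{k+1} = \Big[\prod_{i=k_0}^k(\bm{I}-\gamma_i\bm{M}_i)\Big]\delta_{k_0}
+ H_k - \sum_{i=k_0}^{k-1}\gamma_{i+1}\bm{M}_{i+1}\Big[\prod_{j=i+2}^k(\bm{I}-\gamma_j\bm{M}_j)\Big]H_i,
\]
with $H_i = A_i + B_i$, $A_i=\sum_{j=k_0}^i\gamma_j D_j$, $B_i=\sum_{j=k_0}^i\Delta\theta_j=\theta_{i+1}-\theta_{k_0}$. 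Taking $\|\cdot\|_p$-norms, applying the triangle inequality and the elementary bound $(a+b+c)^p\le 3^{p-1}(a^p+b^p+c^p)$ splits $\EE\|\delta_{k+1}\|_p^p$ into three pieces that I would bound separately, which produces precisely the three terms in \eqref{eq:boundAS} with the three constants $C_1',C_2',C_3'$.

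For the first piece, I would use that $\|\bm{I}-\gamma_i\bm{M}_i\|_p \le K_p(d)\,\|\bm{I}-\gamma_i\bm{M}_i\|_2 = K_p(d)\big(1-\gamma_i\Lambda_{(1)}(\bm{M}_i)\big)$ by Lemma~\ref{lemma:eig} and the assumption $\Lambda_{(1)}(\bm{M}_i)\ge\lambda_1$ (the strengthened version of \eqref{eq:Lambda}, now assumed pointwise a.s., so no conditional expectations are needed). Since $\delta_{k_0}$ is $\mathcal{F}_{k_0-1}$-measurable and all the factors are a.s.\ in $[0,1]$ after using $\gamma_i\lambda_2\le 1$, this term is bounded a.s.\ by $K_p(d)^p\,\|\delta_{k_0}\|_p^p\prod_{i=k_0}^k(1-\gamma_i\lambda_1)\le K_p(d)^p\|\delta_{k_0}\|_p^p\exp\{-p\lambda_1\sum_{i=k_0}^k\gamma_i\}$; taking expectations gives the first term with $C_1'=3^{p-1}K_p^p$. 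For the third piece (the $B_i$ contribution in $H_i$ and in $H_k$), the coefficient bound \eqref{eq:bound_on_coefs}, together with $\|\bm{M}_{i+1}\|_p\le K_p^2\lambda_2$ and $\|\bm{I}-\gamma_j\bm{M}_j\|_p\le K_p(1-\gamma_j\lambda_1)$, gives a total multiplicative factor of $\big(1+K_p^2\lambda_2/\lambda_1\big)$ in front of $\max_{k_0\le i\le k}\|B_i\|_p=\max_{k_0\le i\le k}\|\theta_{i+1}-\theta_{k_0}\|_p$; raising to the $p$-th power yields $C_3'=3^{p-1}(1+K_p^2\lambda_2/\lambda_1)^p$, and then take expectations.

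The second piece, the martingale term, is the main obstacle. Here I would control $\EE\|A_i\|_p^p$ uniformly for $i=k_0,\dots,k$. Since $\{D_j\}$ is a vector martingale difference sequence and, under the new hypothesis, $\|D_j\|=\|G_j-g_j\|\le\|G_j\|+\|g_j\|\le\bar G+\lambda_2\|\delta_j\|$ — but cleaner to use $\|D_j\|\le 2\bar G$ after noting $\|g_j\|=\|\EE[G_j|\mathcal{F}_{j-1}]\|\le\bar G$ — each increment $\gamma_j D_j$ is bounded by $2\gamma_j\bar G$. Applying the Burkholder--Davis--Gundy inequality coordinatewise (or a vector Rosenthal inequality; this is presumably Lemma~\ref{lemma:abel}'s companion or a standard cited result) gives $\EE\|A_i\|_p^p\le d^{p/2}B_p\EE\big(\sum_{j=k_0}^i\|\gamma_j D_j\|^2\big)^{p/2}\le d^{p/2}B_p(2\bar G)^p\big(\sum_{j=k_0}^k\gamma_j^2\big)^{p/2}$, where $B_p$ is the BDG constant. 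Combining this uniform bound on $\EE\|A_i\|_p^p$ with the same coefficient estimate \eqref{eq:bound_on_coefs} used for the $B_i$ part (the factor $1+K_p^2\lambda_2/\lambda_1$), and absorbing the $2^p$ and $d^{p/2}$ into the constant, produces the middle term with $C_2'=3^{p-1}2^p d\,B_p\bar G^p(1+K_p^2\lambda_2/\lambda_1)^p$ — matching the claimed form (note $d^{p/2}\le d^p$ is wasteful but harmless, or the factor $d$ in $C_2'$ reflects a cruder $\|x\|\le\sqrt d\|x\|_\infty$-type step rather than the sharp $d^{p/2}$). The subtle points to get right are: (i) the Abel-transformed sum couples $H_i$ with a product of random matrices, so I cannot simply pull $\EE\|A_i\|_p^p$ out — but since the matrix factors are a.s.\ bounded by $1$ in $\|\cdot\|_p$ up to the $K_p$ factor and are independent-of-sign deterministic bounds, Minkowski in $L^p$ lets me bound $\big\|\sum_i\gamma_{i+1}\bm{M}_{i+1}(\prod\cdots)H_i\big\|_{L^p}\le\sum_i\gamma_{i+1}K_p^2\lambda_2\prod_j K_p(1-\gamma_j\lambda_1)\,\|H_i\|_{L^p}$, and only then use $\|H_i\|_{L^p}\le\|A_i\|_{L^p}+\|B_i\|_{L^p}$; (ii) keeping the exponent arithmetic consistent so that the three exponentials/powers come out as $\exp\{-p\lambda_1\sum\gamma_i\}$, $(\sum\gamma_i^2)^{p/2}$, and the $p$-th power of the max-oscillation. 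Finally, Remark~\ref{rem:norms}-style norm comparisons are not needed since everything is done directly in $\|\cdot\|_p$.
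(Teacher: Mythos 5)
Your proposal is correct and, in skeleton, matches the paper's proof: the same representation \eqref{eq:recursed} after the Abel transformation \eqref{abel_transform}, the same use of Lemma~\ref{lemma:eig} and of \eqref{eq:bound_on_coefs} to produce the factor $1+K_p^2\lambda_2/\lambda_1$, the same $3^{p-1}$ split, and the same coordinatewise martingale inequality with $|D_{jl}|\le 2\bar G$ (via $\|g_j\|=\|\EE[G_j\mid\mathcal{F}_{j-1}]\|\le\bar G$). The one genuine difference is how the martingale part is aggregated. The paper first proves a \emph{pathwise} bound in which $\max_{k_0\le i\le k}\|A_i\|_p$ appears, raises it to the $p$-th power, and must therefore invoke the \emph{maximal} Burkholder inequality (Davis for $p=1$) to control $\EE\max_i\|A_i\|_p^p$. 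Your point (i) instead applies Minkowski's inequality in $L^p(\PP)$ to the Abel sum, so each $\|H_i\|_{L^p}$ carries its own coefficient, the coefficients add up to $1+K_p^2\lambda_2/\lambda_1$ by \eqref{eq:bound_on_coefs}, and a fixed-time Burkholder/BDG bound on $\EE\|A_i\|_p^p$, uniform in $i$, suffices; this route yields exactly the constants of \eqref{eq:boundAS} (with $B_p$ a non-maximal constant, which is immaterial) and is arguably a bit cleaner. Be aware, though, that your opening paragraph describes the \emph{other}, pathwise bookkeeping; if you follow that literally, a uniform-in-$i$ bound on $\EE\|A_i\|_p^p$ is not enough and you need the maximal inequality, exactly as in the paper — so commit to one of the two schemes. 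Two smaller points: apply the conversion $\|\cdot\|_p\le K_p\|\cdot\|_2$ of Lemma~\ref{lemma:eig} once to each whole matrix product (then use submultiplicativity of $\|\cdot\|_2$ and the almost sure bounds $\lambda_1\le\Lambda_{(1)}(\bm{M}_j)$, $\Lambda_{(d)}(\bm{M}_j)\le\lambda_2$), not factor by factor as your phrasing suggests, since otherwise you accumulate a factor $K_p^{k-k_0}$ and the claimed $C_1'$, $C_3'$ are lost; and in the Burkholder step the identity $\|A_i\|_p^p=\sum_{l=1}^d|A_{il}|^p$ gives precisely the factor $d$ of $C_2'$, so no $d^{p/2}$ detour is needed.
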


\begin{proof}
Now we have stronger versions of assumptions (A1) and (A2):
\begin{equation}
\label{eq:a1_stronger}
0<\lambda_1\le \Lambda_{(1)}(\bm{M}_i) \le\Lambda_{(d)}(\bm{M}_i)\le \lambda_2, \quad 
\|G_i(\hat{\theta}_i,\bm{X}_i)\| \le \bar{G}, \quad i=k_0\ldots, k, 
\end{equation}
hold almost surely.  Along the same lines as for (\ref{eq:after_triangle_lp}),
by using Lemma \ref{lemma:eig},  (\ref{eq:a1_stronger}),  (\ref{eq:bound_on_coefs})
and the elementary inequality $1-x\le e^{-x}$, we obtain that 
\begin{align*}
\|\delta_{k+1}\|_p &\leq
K_p \|\delta_{k_0}\|_p \prod_{i=k_0}^k  (1- \gamma_i \lambda_1)+\max_{k_0\le i\le k}\|C_i\|_p
\Big[1+K_p^2\sum_{i=k_0}^{k-1}\gamma_{i+1} \lambda_2\!\!\! \prod_{j=i+2}^k (1- \gamma_i \lambda_1) \Big]\\
& \le K_p \|\delta_{k_0}\|_p \exp\Big\{-\lambda_1\sum_{i=k_0}^k\gamma_i\Big\}
+ \Big[1+\frac{K_p^2\lambda_2}{\lambda_1}\Big] \Big(\max_{k_0\le i\le k} \|A_i\|_p+\max_{k_0\le i\le k} \|B_i\|_p\Big)
\end{align*}
almost surely, where constant $K_p=K_p(d)$ is from Lemma \ref{lemma:eig}. 
Take now the $p$-th power of both sides of the inequality and apply 
the H\"older inequality
$|\sum_{i=1}^m a_i|^p\le m^{p-1}\sum_{i=1}^m |a_i|^p$ for $m=3$ to get
\begin{align*}
\|\delta_{k+1}\|_p^p &\leq
3^{p-1} K_p^p \|\delta_{k_0}\|_p^p \exp\Big\{-p\lambda_1\sum_{i=k_0}^k\gamma_i\Big\} \\
& \quad\quad + 3^{p-1} \Big(1+\frac{K_p^2\lambda_2}{\lambda_1} \Big)^p 
\Big(\max_{k_0\le i\le k} \|A_i\|_p^p + \max_{k_0\le i\le k} \|B_i\|_p^p\Big).
\end{align*}

Recall that the sequence $\big\{ \sum_{j=k_0}^i \gamma_j D_j,\, i \ge k_0 \big\}$
 is a martingale\index{Martingale} with respect to the filtration $\{\mathcal{F}_i, i\ge k_0\}$ 
and that the coordinates of $D_j$ verify $|D_{jl}| \le 2 \| G_j\| \le 2 \bar{G}$ almost surely, 
$l=1,\ldots d$, $j=k_0, \ldots, k$.
Applying the maximal Burkholder inequlity for $p>1$ and the Davis inequality for $p=1$
 (cf.\ \citep{Shiryaev:1996}) yields 
\begin{align*}
\EE  \max_{k_0\le i\le k} \|A_i\|_p^p
&=
\EE  \max_{k_0\le i\le k} \sum_{l=1}^d \bigg|\sum_{j=k_0}^i \gamma_j D_{jl} \bigg|^p 
\le\sum_{l=1}^d\EE  \max_{k_0\le i\le k} \bigg|\sum_{j=k_0}^i \gamma_j D_{jl} \bigg|^p\\
&\le
B_p \sum_{l=1}^d\EE  \bigg[\sum_{j=k_0}^k \gamma_j^2 D_{jl}^2 \bigg]^{p/2}
\le d B_p 2^p \bar{G}^p \bigg[\sum_{j=k_0}^k \gamma_j^2 \bigg]^{p/2},
\end{align*}
for some constant $B_p$.
One can take $B_p = ((18p^{5/2})/(p-1)^{3/2})^p$ for $p>1$, cf.\ \citep{Shiryaev:1996}. 
The second inequality of the theorem now follows by taking expectations on both sides of the 
bound on $\|\delta_{k+1}\|_p^p$ above and by using the  last inequality.
\end{proof}

\begin{remark}
 One can derive a similar result for the $\EE\|\delta_{k+1}\|^p$, by simply taking the $p$-th 
power of the inequality (\ref{eq:after_triangle_lp}) and then proceeding in the same way as in the proof of
Theorem \ref{theo:bound2}, with minor modifications in the argument for the martingale $A_i$.
 
Once a bound on $\EE\|\delta_{k+1}\|^p$ is established, one can use it for proving 
Theorem \ref{theo:bound2}  in another way. Namely, since
$\|x\|_s \le \|x\|_r \le d^{1/r-1/s}\|x\|_s$ for any $x\in \mathbb{R}^d$ 
and $s\ge r\ge 1$,  $\|\delta_{k+1}\|_p \le R_2^p \|\delta_{k+1}\|$, with $R_2^p=1$ 
if $p\ge 2$ and $R_2^p=d^{(2-p)/(2p)}\le d^{1/2}$ if $1\le p< 2$.
Thus, a bound for $\EE\|\delta_{k+1}\|^p_p$ will immediately follow from the obtained bound for 
$\EE\|\delta_{k+1}\|^p$. The bound will be of the same form as in Theorem \ref{theo:bound2}, but
with different constants $C_1, C_2, C_3$.
\end{remark}

\begin{remark}
\label{rem:close_parameter}
Consider the following situation, which we will call Case I.
Suppose we are not interested in tracking the, say, \emph{natural} parameter 
$\theta_k$ of the model, but rather some other time-varying parameter 
$\theta^*_k$,  which is also assumed to be predicable with respect to the filtration 
$\{\mathcal{F}_k\}_{k\in\mathbb{N}_{-1}}$. 
Denote  $\Delta\theta^*_k = \theta^*_k-\theta^*_{k+1}$, $k\in\mathbb{N}_0$.
The difference $\varepsilon_k=\theta_k-\theta^*_k$, $k \in \mathbb{N}_0$, 
can be seen as an approximation error.
Similar to (\ref{eq:recursed}), the following expansion can be 
derived for the quantity $\delta_k^*=\hat{\theta}_k-\theta^*_k$:
\begin{align*}
\delta_{k+1}^*& 
= \delta_k^* + \Delta\theta^*_k + \gamma_kD_k - \gamma_k\bm{M}_k(\hat{\theta}_k-\theta_k)
= \Delta\theta^*_k + \gamma_k\bm{M}_k\varepsilon_k + \gamma_k D_k 
+ (\bm{I}-\gamma_k \bm{M}_k)\delta_k^*\\
&= \Big[\prod_{i=k_0}^k(\bm{I}- \gamma_i \bm{M}_i)\Big] \delta_{k_0}^* 
+ \sum_{i=k_0}^k \Big[\prod_{j=i+1}^k(\bm{I}- \gamma_j \bm{M}_j)\Big] 
(\Delta\theta^*_i + \gamma_i\bm{M}_i \varepsilon_i+\gamma_i D_i).
\end{align*}

Now consider Case II:  we want to track 
the natural parameter $\theta_k$ but the average gain makes an error $\eta_k$, i.e.,
$g_k = -\bm{M}_k(\hat{\theta}_k-\theta_k) + \eta_k$, $k\in \mathbb{N}_0$. 
The error term $\eta_k$ may be random but must be measurable with 
respect to $\mathcal{F}_{k-1}$. Again, similar to (\ref{eq:recursed}), we can derive
\[
\delta_{k+1}	= \Big[\prod_{i=k_0}^k(\bm{I}- \gamma_i \bm{M}_i)\Big] \delta_{k_0} 
+ \sum_{i=k_0}^k \Big[\prod_{j=i+1}^k(\bm{I}- \gamma_j \bm{M}_j)\Big] 
(\Delta\theta_i +\gamma_i\eta_i + \gamma_i D_i).
\]
Now notice that Case I can actually be reduced to Case II by putting 
in the last relation $\delta_i=\hat{\theta}_i-\theta^*_i$ and 
$\eta_i=\bm{M}_i \varepsilon_i$ (where $\varepsilon_i=\theta_i-\theta^*_i$), 
$i\in \mathbb{N}_0$. Therefore, consider  only Case II from now on.

Under the conditions of Theorem \ref{theo:bound},
in the same way as for (\ref{eq:boundE}),
we can derive the following bound:  
\begin{equation}
\label{eq:boundE2}
\begin{aligned}
\EE \|\delta_{k+1}\| \le
&C_1 \exp\Big\{-\lambda_1\sum_{i=k_0+1}^k\gamma_j\Big\} 
+ C_2 \Big[\sum_{i=k_0}^{k-1} \gamma_i^2\Big]^{1/2}\\
&+C_3\EE\max_{k_0\le i\le k} \|\theta_{i+1}-\theta_{k_0}\|
+ C_3 \EE  \sum_{i=k_0}^k \gamma_i\|\eta_i\|.
\end{aligned}
\end{equation}
Similarly, under the conditions of Theorem \ref{theo:bound2}, 
\begin{equation}
\label{eq:boundAS2}
\begin{aligned}
\EE \|\delta_{k+1}\|_p^p\le
&C'_1 \exp\Big\{ -p \lambda_1\sum_{i=k_0}^k\gamma_j\Big\} 
+ C'_2 \Big[\sum_{i=k_0}^{k-1} \gamma_i^2\Big]^{p/2}\\
&+C'_3 \EE \Big[\max_{k_0\le i\le k} \|\theta_{i+1}-\theta_{k_0}\|_p 
+\sum_{i=k_0}^k\gamma_i\|\eta_i\|_p\Big]^p.
\end{aligned}
\end{equation}
Clearly, (\ref{eq:boundE2}) and (\ref{eq:boundAS2}) 
generalize the bounds of Theorems \ref{theo:bound} and  \ref{theo:bound2},  
where we had   $\eta_i=0$, $i\in \mathbb{N}_0$.

In Case I, we have $\delta_i=\hat{\theta}_i-\theta^*_i$ and $\eta_i=\bm{M}_i\varepsilon_i$
with $\varepsilon_k=\theta_k-\theta^*_k$,
$i\in \mathbb{N}_0$, in relations (\ref{eq:boundE2}) and (\ref{eq:boundAS2}). 
Noting that $\|\eta_i\|_p =\|\bm{M}_i\varepsilon_i\|_p<\lambda_2 K_p\|\varepsilon_i\|_p$
for all $p\ge 1$ and $i\in \mathbb{N}_0$, 
we can rewrite bounds (\ref{eq:boundE2}) and (\ref{eq:boundAS2}) in terms of 
$\|\varepsilon_i\|_p$ instead of $\|\eta_i\|_p$ with appropriate 
adjustments of corresponding constants.
\end{remark}


\section{Construction of gain functions}
\label{sec:gains}

Any gain function for which 
conditions (A1) and (A2) hold may be used with our algorithm, and whether a particular 
gain function is suitable or not depends 
on the model under study and the quantity that we wish to track. 
For certain types of models and quantities to track, 
there are natural choices for the gain function. Many different settings are investigated
in the literature. 
In this section we consider the construction of appropriate gain functions 
to be used in the algorithm (\ref{eq:algorithm_main}) in several traditional settings.
In particular, we
relate our general approach to well known classical procedures such as Robbins-Monro and 
Kiefer-Wolfowitz algorithms and outline possible extensions. 

\subsection{Signal + noise setting}
The traditional `signal+noise' situation can be represented by the  following observation model:
\[
X_k = \theta_k + \xi_k, \quad k \in \mathbb{N}_0,
\]
where $l=d$, $\{\theta_k\}_{k \in \mathbb{N}_0}$ is a predictable process 
($\theta_k=\theta_k(\bm{X}_{k-1})$) we are interested in tracking, 
$\{\xi_k\}_{k\in \mathbb{N}_0}$ is a martingale difference noise, with respect to 
the filtration $\{\mathcal{F}_k\}_{k\in\mathbb{N}_{-1}}$. 
We use the algorithm (\ref{eq:algorithm_main}) for tracking $\theta_k$, and in this case 
we can simply take the following gain function
\begin{equation}
\label{eq:gain_regression}
G_k(\hat{\theta}_k,\bm{X}_k)	= - (\hat{\theta}_k-X_k), \quad k \in \mathbb{N}_0,
\end{equation}
since  
\[
g_k(\hat{\theta}_k,\theta_k|\bm{X}_{k-1}) = 
\EE[G_k(\hat{\theta}_k, \bm{X}_k)|\bm{X}_{k-1}] = - (\hat{\theta}_k - \theta_k), \quad k \in \mathbb{N}_0,
\]
i.e., $\bm{M}_k(\bm{X}_{k-1})=\bm{I}$.
Clearly, condition (A1) holds and condition (A2) follows as well if we assume 
$\EE \|\xi_k\|^2\le c$, $k\in\mathbb{N}_0$. Indeed, 
according to (\ref{G_bounded}), it is enough to show the boundedness of the second moment of 
$G_k$: 
\[
\EE \|G_k(\hat{\theta}_k,\bm{X}_k)\|^2 \le  
3 \big[ \EE \|\hat{\theta}_k\|^2 + \EE \|\theta_k\|^2 +  \EE \|\xi_k\|^2 \big] \le C,
\quad  k \in \mathbb{N}_0,
\]
by virtue of the H\"older inequality, Lemma \ref{lemma_bound} and (\ref{C_Theta}).
The classical  nonparametric regression model fits into this  framework so that our results 
can be applied. For example, the simplest nonparametric regression model with an equidistant design on 
$[0,1]$ is as follows: $X_k= \theta(k/n) +\xi_k$, $k=1,\ldots, n$, with 
independent noises $\xi_k$'s, $\EE \xi_k =0$, $\EE \xi^2_k=\sigma^2$;
we will return to this issue in subsection \ref{sec:variational_setups:lipschitz}.
\begin{remark}
\label{gain_autoregression}
Possibly, $\EE[X_k|\bm{X}_{k-1}] =\phi(\theta_k)$ for some smooth function $\phi$.
In this case, one should consider 
$G_k(\hat{\theta}_k,\bm{X}_k)= -(\phi(\hat{\theta}_k)-X_k)$, 
$\in \mathbb{N}_0$, so that $g_k(\hat{\theta}_k,\theta_k|\bm{X}_{k-1})=
- \big(\phi(\hat{\theta}_k) - \phi(\theta_k)\big)$, $k \in \mathbb{N}_0$,
which should be comparable to $- (\hat{\theta}_k - \theta_k)$.
Autoregressive models, for example, fall into this category (cf.\ Section~\ref{sec:examples:ARd}).
\end{remark}

\subsection{Robbins-Monro setting: tracking roots}
 
Let us turn to more dynamical situations where the observations themselves depend on our tracking sequence.
In their seminal paper, \citep{Robbins:1951} studied the problem of
finding the unique $\alpha$-root $\theta$ of a monotone function $f$, i.e., the equation
$f(x)=\alpha$ has a unique solution at $x=\theta$.
The function $f$ can be observed at any point $x$ but with noise $\xi$:
$X(x) = F(x, \xi)$ so that  $\EE F(x,\xi)=f(x)$. 
A stochastic approximation algorithm of design points converging to $\theta$
is known as classical Robbins-Monro procedure.
We now illustrate how this also fits into our general tracking algorithm scheme.

In fact, the following model essentially extends the original setup  of \citep{Robbins:1951}.
Suppose there is a time series $\{Y_k, k\in \mathbb{N}_0\}$ 
(with $\bm{Y}_k$ taking values in $\mathcal{Y}^k$) running at the background,
which is not (fully) observable. Instead, some other $d$-dimensional (related) time series 
$\{X_k, k\in \mathbb{N}_0\}$ is observed, 
which we introduce below. 
As usual, let $\mathcal{F}_k =\sigma(\bm{X}_k)$, $ k\in \mathbb{N}_0$.
Further, for a sequence of functions
$f_k: \mathbb{R}^d \times \mathcal{Y}^k\mapsto \mathbb{R}^d$, let 
a $d$-dimensional  measurable function  $\theta_k=\theta_k(\bm{X}_{k-1})$
be the unique solution of the equation $\alpha_k(\bm{X}_{k-1})=\bar{f}_k(\theta_k,\bm{X}_{k-1})$, 
where $\bar{f}_k(\theta_k,\bm{X}_{k-1})=\EE [f_k(\theta_k,\bm{Y}_k)| \mathcal{F}_{k-1}]$, 
for some measurable function $\alpha_k(\bm{X}_{k-1})$, $k\in \mathbb{N}_0$.
(Here $\bm{Y}_k$ may contain $\bm{X}_{k-1}$.)

The goal is to track the sequence $\{\theta_k\}_{k\in\mathbb{N}_0}$.
At a time moment $k\in \mathbb{N}_0$, we observe
the noise corrupted value of $f_k(\hat{\theta}_k, \bm{Y}_k)$ 
at some design point $\hat{\theta}_k$ (which can be picked on the basis of the 
previous observations $\bm{X}_{k-1}$, i.e., $\hat{\theta}_k=\hat{\theta}_k(\bm{X}_{k-1})$):
\begin{equation}
\label{robbins_monro}
X_k = f_k(\hat{\theta}_k,\bm{Y}_k)  =\bar{f}_k(\theta_k,\bm{X}_{k-1}) + \xi_k, \quad k \in \mathbb{N}_0,
\end{equation}
where $\{\xi_k\}_{k\in\mathbb{N}_0}$ is a martingale difference noise 
sequence with respect to the filtration $\{\mathcal{F}_k\}_{k \in\mathbb{N}_{-1}}$
(indeed, let simply $\xi_k = f_k(\hat{\theta}_k,\bm{Y}_k)-\bar{f}_k(\theta_k,\bm{X}_{k-1})$).
Of course, we could assume a more general model $X_k = f_k(\hat{\theta}_k,\bm{Y}_k) + \xi_k$,  $k \in \mathbb{N}_0$,
but this would not have made any principal difference, since variable $\xi_k$
 can be incorporated into the vector $Y_k$.  

Let the design points $\{\hat{\theta}_k, k \in \mathbb{N}_0\}$ in (\ref{robbins_monro}) be
determined by the algorithm  (\ref{eq:algorithm_main}) and
we want this algorithm to track $\theta_k$. 
Theorem \ref{theo:bound} is applicable if the gain $G_k$ in (\ref{eq:algorithm_main}) satisfies (A1) and (A2).
%
%
%
%
%
As in the Robbins-Monro algorithm, the gain is taken to be
\[
G_k(\hat{\theta}_k,\bm{X}_k)= -\big(X_k-\alpha_k(\bm{X}_{k-1})\big), \quad k \in \mathbb{N}_0.
\]
Then  $g_k(\hat{\theta}_k,\theta_k|\mathcal{F}_{k-1}) 
= -\big(\bar{f}_k(\hat{\theta}_k,\bm{X}_{k-1})-\alpha_k(\bm{X}_{k-1})\big)$, 
$k \in \mathbb{N}_0$, and (A2) is fulfilled if, for example, $\EE \|f_k(\hat{\theta}_k, \bm{Y}_k)\|^2 \le c$ 
and $\EE\|\xi_k\|^2 \le C$, $k \in \mathbb{N}_0$. 
Condition (A1), or equivalently (\~A1), is fulfilled if, for some $0<\lambda_1 \le \lambda_2$, 
\[
\lambda_1 \|\hat{\theta}_k-\theta_k\|^2
\le (\hat{\theta}_k-\theta_k)^T \big(\bar{f}_k(\hat{\theta}_k,\bm{X}_{k-1})
-\bar{f}_k(\theta_k,\bm{X}_{k-1})\big)  \le \lambda_2 \|\hat{\theta}_k-\theta_k\|^2,
\quad k \in \mathbb{N}_0,
\]
almost surely.
In the last display, one should recognizes the usual regularity requirements 
for the function $f$  in the classical Robbins-Monro setting: 
$d=1$, $\alpha_k(\bm{X}_{k-1})=\alpha_k$ and  $f_k(\vartheta, \bm{Y}_k)=f(\vartheta)$ 
(so that $\theta_k=\theta$ is the non-random solution of the equation
$\alpha= f(\theta)$):
$ \lambda_1 \le (f(\vartheta)-\alpha)/(\vartheta - \theta) \le \lambda_2$.
In multidimensional case, this can be seen as a generalized  identifiability  requirement 
for the sequence $\{\theta_k, k \in \mathbb{N}_0\}$. 
For example,  if $\bar{f}_k(\vartheta, \bm{x}_{k-1})$ is a differentiable mapping in $\vartheta\in\Theta$
for each $ \bm{x}_{k-1} \in \mathcal{X}^{k-1}$, then a sufficient condition for (A1) is
 positive definiteness of the Jacobian matrix of $\bar{f}_k(\vartheta, \bm{x}_{k-1})$ 
(with respect to $\vartheta$), uniformly
in $ \bm{x}_{k-1} \in \mathcal{X}^{k-1}$ and over the support of $\hat{\theta}_k$.
One can possibly relax this to a vicinity of the root $\theta_k$ under other appropriate conditions
which guarantee that $\hat{\theta}_k$ eventually gets into a neighborhood of $\theta_k$.

\begin{remark}
A particular example is 
$\alpha_k=\bar{f}_k(\theta_k) = \EE [f_k(\theta_k,Z_k)| \mathcal{F}_{k-1}]$, where $Z_k$ 
is a subvector of $Y_k$, independent of $\mathcal{F}_{k-1}$.
\end{remark}

\subsection{Kiefer-Wolfowitz setting: tracking maxima}

Another classical example is the algorithm of
\citep{Kiefer:1952} 
for successive estimating the maximum of a function $f$  
which can be observed at any point, but gets corrupted with a martingale difference noise
(similarly, one can formulate the problem of tracking minima of a sequence of functions).
The algorithm is based on a gradient-like method, 
the gradient of $f$  being approximated by using finite differences. There are many modifications  
of the procedure, including multivariate extensions, and they are all based on estimates of the gradient of $f$. 
The following scheme essentially contains many such procedures considered in the literature
and even extends them to a time-varying predictable maxima process $\{\theta_k, k \in \mathbb{N}_0\}$.

As in the previous subsection, suppose there is a time series 
$\{Y_k, k\in \mathbb{N}_0\}$, with $\bm{Y}_k$ taking values in $\mathcal{Y}^k$, 
running in the background, which is not (fully) observable. Instead, some other related time series 
$\{X_k, k\in \mathbb{N}_0\}$ is observed, 
which we introduce below. Let $\mathcal{F}_k =\sigma(\bm{X}_k)$, $ k\in \mathbb{N}_0$.
Suppose we are given a sequence of measurable functions 
$F_k:\Theta\times \mathcal{Y}^k\mapsto\mathbb{R}$, $\Theta\subset\mathbb{R}^d$,
$k\in\mathbb{N}_0$, 
such that the function $\bar{F}_k(\vartheta,\bm{X}_{k-1}) 
=\EE \big[F_k(\vartheta,\bm{Y}_k)|\mathcal{F}_{k-1}\big]$ has a unique maximum 
$\theta_k=\theta_k(\bm{X}_{k-1})$ on $\Theta$, i.e., 
\[
\max_{\vartheta \in \mathbb{R}^d} \bar{F}_k(\vartheta,\bm{X}_{k-1})=
\max_{\vartheta \in \Theta}\bar{F}_k(\vartheta,\bm{X}_{k-1}) =\bar{F}_k(\theta_k,\bm{X}_{k-1}), \quad
k \in \mathbb{N}_0. 
\]
Again, $\bm{Y}_k$ may contain $\bm{X}_{k-1}$.

We want to track the sequence $\{\theta_k\}_{k\in\mathbb{N}_0}$.
For that we use the sequence of design points $\{\hat{\theta}_k, k \in \mathbb{N}_0\}$ 
defined by the tracking algorithm  (\ref{eq:algorithm_main}), 
with $\mathcal{F}_k$-measurable gain functions $G_k$ to be specified later.
At a time moment $k\in \mathbb{N}_0$, we observe the so called ``noisy approximate gradients'' 
at those design points:
\begin{equation}
\label{kiefer_wolfowitz}
X_k = f_k(\hat{\theta}_k,\bm{Y}_k) + \xi_k, \quad k \in \mathbb{N}_0,
\end{equation}
where $\EE \|f_k(\hat{\theta}_k,\bm{Y}_k)\|^2 \le c$ and $\EE \|\xi_k\|^2 \le C$ for all
$k \in \mathbb{N}_0$, and $\{\xi_k\}_{k\in\mathbb{N}_0}$ is a martingale difference 
noise sequence with respect to the filtration $\{\mathcal{F}_k\}_{k\in\mathbb{N}_{-1}}$. 
The $d$-dimensional approximate gradient $f_k(\hat{\theta}_k,\bm{Y}_k)$ is not necessarily 
the exact pathwise $\vartheta$-derivative of
$F_k(\vartheta,\bm{Y}_k)$ at $\hat{\theta}_k$ (i.e., $f_k(\vartheta,\bm{Y}_k) =
\nabla_\vartheta F_k(\vartheta,\bm{Y}_k)$)
 but  such that  
\begin{equation}
\label{appr_gradient}
\EE \big[f_k(\hat{\theta}_k,\bm{Y}_k) | \mathcal{F}_{k-1}\big] =
\bar{f}_k(\hat{\theta}_k,\bm{X}_{k-1})=-\bm{M}_k(\hat{\theta}_k-\theta_k) +\eta_k, \quad k \in \mathbb{N}_0,
\end{equation}
almost surely, where a symmetric positive definite matrix 
$\bm{M}_k=\bm{M}_k(\bm{X}_{k-1})$ satisfies conditions (\ref{eq:Lambda})
and $\eta_k=\eta_k(\bm{X}_{k-1})$ is some predictable approximation error.
Of course, such a representation (\ref{appr_gradient}) is always possible: simply take
$\eta_k =\bar{f}_k(\hat{\theta}_k,\bm{X}_{k-1})+\bm{M}_k(\hat{\theta}_k-\theta_k)$ for some 
symmetric positive definite matrix $\bm{M}_k$ satisfying  (\ref{eq:Lambda}); useful ones are those 
for which the $\eta_k$'s are under control -- basically, the $\eta_k$'s should be small.  

As a choice for the gain, take now $G_k(\hat{\theta}_k,\bm{X}_k)= X_k$,
so that $g_k(\hat{\theta}_k,\theta_k |\bm{X}_{k-1}) =
\EE \big[f_k(\hat{\theta}_k,\bm{Y}_k) | \mathcal{F}_{k-1}\big] =
\bar{f}_k(\hat{\theta}_k,\bm{X}_{k-1})=-\bm{M}_k(\hat{\theta}_k-\theta_k) +\eta_k$, $k \in \mathbb{N}_0$.
Clearly, (A2) holds in view of moment conditions on the quantities in (\ref{kiefer_wolfowitz}), however
(A1) is not satisfied in general since there is an approximation (possibly nonzero) term $\eta_k$ involved.
Yet, we are in the position of Remark \ref{rem:close_parameter} and thus
the bound (\ref{eq:boundE2}) for the tracking error holds in this case.
This bound is however useful only if the approximation errors $\eta_k$'s get sufficiently 
small as $k$ gets bigger. 
The most desirable situation is when $\eta_k=0$, $k \in \mathbb{N}_0$.

For each particular model of form (\ref{kiefer_wolfowitz}), one needs to determine 
conditions that should be imposed on the approximate gradients $f_k$'s in order to be able to 
claim a reasonable quality of the tracking algorithm by using our general result.
Conditions on approximate gradients $f_k$'s from (\ref{kiefer_wolfowitz}) 
which provide control on the magnitude of the approximation errors $\eta_k$'s
are comparable to the ones proposed in many papers. 
Examples can be found in \citep{Kushner:2003,Bach&Moulines:2011};
see further references therein. 
Commonly, a finite difference form of the gradient estimate is used as noisy approximate gradient.  
Below we outline two settings.

First consider the following situation which is very close to the classical Kiefer-Wolfowitz setting:
$F_k(\vartheta,\bm{Y}_k)= F_k(\vartheta,Z_k)$ for some subvector $Z_k$ of $Y_k$,
independent of $\bm{X}_{k-1}$ defined below and we wish to maximize the function 
$\EE  F_k(\vartheta,Z_k) =\bar{F}_k(\vartheta)$. For simplicity, let  $F_k(\vartheta,Z_k)=F(\vartheta,Z_k)$
and all $Z_k$'s are identically distributed (although the generalization to the time-varying case is straightforward)
so that $\EE F(\vartheta,Z_k) =\bar{F}(\vartheta)$ is to be maximized: 
$\max_{\vartheta \in \Theta} \bar{F}(\vartheta) = \bar{F}(\theta)$. Let $\{c_k\}_{k \in \mathbb{N}_0}$ be a positive
sequence, $\{e_i, i=1,\ldots, d\}$ be the standard orthonormal basis vectors in $\mathbb{R}^d$, 
$Z_{k,i}^+$ en $Z_{k,i}^-$ have the same distribution as $Z_k$, $i=1,\ldots, d$. Denote
$\bm{Z}_k^+ = (Z_{k,1}^+,\ldots, Z_{k,d}^+)^T$, 
$\bm{F}(\hat{\theta}_k + c_k \bm{e}, \bm{Z}_k^+)=
(F(\hat{\theta}_k + c_k e_1, Z_{k,1}^+), \ldots, F(\hat{\theta}_k + c_k e_d, Z_{k,d}^+) )^T$, 
likewise for $\bm{F}(\hat{\theta}_k-c_k \bm{e}, \bm{Z}_k^-)$ and $\bar{\bm{F}}(\hat{\theta}_k\pm c_k \bm{e})$.
The observations are the noisy finite difference 
estimates of the gradient:
\[
X_k^{\pm}=\bm{F}(\hat{\theta}_k\pm c_k \bm{e}, \bm{Z}_k^\pm) + \xi_k^{\pm},
\quad k \in \mathbb{N}_0.
\]
Here $\{\xi_k^{\pm}\}_{k\in\mathbb{N}_0}$  is a martingale difference  noise sequence 
with respect to the filtration $\{\mathcal{F}_k\}_{k\in\mathbb{N}_{-1}}$, 
$\hat{\theta}_k$ denotes the $k$th estimate of the maximum point $\theta$ according to 
the algorithm (\ref{eq:algorithm_main}) with the gain $G_k(\hat{\theta}_k,\bm{X}_k)=\frac{X^+_k - X_k^-}{2c_k}$.
Then, under some regularity conditions,
\[
g_k(\hat{\theta}_k,\theta_k |\bm{X}_{k-1}) =
\frac{\bar{\bm{F}}(\hat{\theta}_k+c_k \bm{e})-\bar{\bm{F}}(\hat{\theta}_k-c_k \bm{e})}{2c_k}
=\nabla \bar{F}(\hat{\theta}_k) + \eta_k =
 -\bm{M}_k(\hat{\theta}_k-\theta) +\eta_k, 
\]
where the magnitude of $\eta_k$ is controlled by $c_k$. Usually $c_k\to 0$ as $k\to \infty$ in an appropriate way.
To ensure that $\nabla \bar{F}(\hat{\theta}_k)=-\bm{M}_k(\hat{\theta}_k-\theta)$ 
(possibly with a small approximation error)
for some positive definite matrix $\bm{M}_k$ satisfying (\ref{eq:Lambda}), 
concavity of $\bar{F}$ is typically required, either global or over a compact set 
which is known to include the maximum location $\theta$. 
For example, if function $\bar{F}$ is sufficiently smooth and strongly concave, then 
by Taylor's expansion  $-\bm{M}_k=H(\bar{F})(\theta^*_k)$, the Hessian matrix of $\bar{F}$
at some point $\theta^*_k$ between $\hat{\theta}_k$ and $\theta$, 
the relations (\ref{eq:Lambda}) are fulfilled and the approximation error $\eta_k$ is small if 
$c_k$ is small.

Another approach (due to \citep{Spall:1992}) is based on random direction 
instead of the unit basis vectors.
We use the same notations as in the previous setting with one simplification: assume now that 
there are no vectors $Z_k$'s involved in the model so that $\bar{F}(\vartheta) =F(\vartheta)$.
Let $\{D_k, k\in\mathbb{N}\}$ denote a sequence of independent ($D_k$ is also assumed to be independent 
of $\bm{X}_{k-1}$) random unit vectors in $\mathbb{R}^d$.
At time moment $k \in \mathbb{N}_0$ we observe
\[
X_k^\pm=F(\hat{\theta}_k \pm c_k D_k)+\xi_k^{\pm},
\quad k \in \mathbb{N}_0,
\]
where the tracking sequence $\hat{\theta}_k$  is defined by
the algorithm (\ref{eq:algorithm_main}) with the gain function 
$G_k(\hat{\theta}_k,\bm{X}_k^+, \bm{X}_k^-, D_k)=D_k \frac{X_k^+ - X_k^-}{2c_k}$. 
\begin{remark}
Notice that one step in the previous (classical Kiefer-Wolfowitz) observation scheme requires 
in essence $2d$ observations in design points $\hat{\theta}_k\pm c_ke_i$, $i=1,\ldots, d$,
whereas only two measurements must be  made in the case of the above random direction observation scheme.
This property was the main motivation for the random direction method introduced by 
\citep{Spall:1992, Kushner:2003}.
\end{remark}
Then, under some regularity conditions,
\begin{align*}
g_k(\hat{\theta}_k,\theta_k|\bm{X}_{k-1})
&=\EE \Big[ D_k\frac{F(\hat{\theta}_k+c_k D_k) -F(\hat{\theta}_k-c_k D_k) }{2c_k}\big| \mathcal{F}_{k-1} \Big] \\
&=\EE \big[ D_kD_k^T \big]\nabla F(\hat{\theta}_k)+ \eta_k
=-\bm{M}_k(\hat{\theta}_k-\theta_k) + \eta_k,
\end{align*}
where $\bm{M}_k =-\EE \big[ D_kD_k^T \big]H(F)(\theta_k^*)$ and again the magnitude of $\eta_k$ is controlled by $c_k$.
The relations (\ref{eq:Lambda}) hold if, for example, we assume that the random directions were chosen 
in such a way that $\EE \big[ D_kD_k^T \big]$ are positive definite matrices and the Hessian $H(F)(\theta_k^*)$
is negative definite.

\begin{remark}
A particular choice of function $F_k$ is $F_k(\vartheta,V_k) =l(\vartheta,V_k)$, $k\in\mathbb{N}_0$, 
where $V_k$'s is a sequence of observations with values on
a measurable space $\mathcal{V}_k$ and $l: \Theta\times \mathcal{V}_k  \mapsto \mathbb{R}_+$ is a loss function.
Then $\EE F_k(\vartheta,V_k)$ is the prediction risk of the predictor given by $\vartheta$. Classical examples are least squares
and logistic regression (cf.\ \citep{Bach&Moulines:2011}):
$F_k(\vartheta,V_k) = \frac{1}{2}\big(x_k^T \vartheta -y_k)^2$
or $F_k(\vartheta,V_k) = \log[1+\exp(-y_kx_k^T \vartheta]$, where 
$V_k=(x_k,y_k)$, $x_k\in \Theta$ and $y_k\in\mathbb{R}$, or $y_k \in \{-1, 1\}$ for logistic regression.
\end{remark}

\subsection{Tracking conditional quantiles}

Consider one more example.
Suppose $\mathcal{X}\subset\mathbb{R}$ and we would like to track the conditional 
quantile of the distribution of our observed time series $\{X_k,k\in\mathbb{N}_0\}$, i.e.,  
$\theta_k=\theta_k(\bm{X}_{k-1})$ such that 
$\theta_k=\inf\big\{x\in\mathcal{X}:F_k(x|\bm{X}_{k-1})\ge\alpha_k\big\}$, 
where the levels $\alpha_k \in (0,1)$ are of our choice and 
$F_k(x|\bm{X}_{k-1})$ is the conditional distribution function of $X_k$ 
given the past $\bm{X}_{k-1}$. Assume that this conditional distribution posses a density
$f_k(x|\bm{X}_{k-1})$. In this case it makes sense to use
$G_k(\hat{\theta}_k,\bm{X}_k)=\alpha_k - \mathbb{I}\{X_k-\hat{\theta}_k\le0\}$
in the algorithm (\ref{eq:algorithm_main}) for tracking $\theta_k$,
since  
\[
g_k(\hat{\theta}_k,\theta_k|\bm{X}_{k-1}) =
- (F_k(\hat{\theta}_k|\bm{X}_{k-1}) - \alpha_k) \approx -f_k(\theta_k^*|\bm{X}_{k-1})(\hat{\theta}_k-\theta_k),
\]
for some $\theta_k^*$ between $\hat{\theta}_k$ and $\theta_k$.
Under some mild conditions Theorem \ref{theo:bound} is applicable.
Note also that the algorithm based on this gain function only requires knowledge of the values of 
the indicators $\mathbb{I}\{X_k - \hat{\theta}_k \le 0\}$ which means that we may still track 
the required quantiles without explicitly observing $X_k$.
This problem is treated in detail for the case of independent observations in
\citep{Belitser&Serra:2013}.

\subsection{Gain function based on score}

For certain models it may not be obvious how gain functions can be constructed, 
especially when tracking multi-dimensional parameters.
It is therefore important to have a general procedure that can be used to construct candidate gain 
functions that can either be used directly or, if needed, modified to verify (A1) and (A2).

In this subsection we assume that we are in the framework of Remark \ref{rem:parametrized}, i.e.,
we are dealing with a parameterized model 
$\mathcal{P}_k = \mathcal{P}_k(\Theta)=\big\{\mathbb{P}_{\theta}(\cdot|\bm{x}_{k-1}): 
\,  \theta\in \Theta, \, \bm{x}_{k-1}\in \mathcal{X}^{k-1}\big\}$. Assume further
that for each $k\in\mathbb{N}$, each distribution from the family of conditional 
distributions $\mathcal{P}_k$ 
has a density with respect to some $\sigma$-finite dominating measure 
and denote this conditional density by $p_{\vartheta}(x|\bm{x}_{k-1})$, 
$\vartheta = (\vartheta_1, \dots, \vartheta_d) \in \Theta\subset \mathbb{R}^d$.
Assume also that there is a common support $\mathcal{X}$ for these densities, and that for any 
$x\in\mathcal{X}$ and $\vartheta\in\Theta\subset\mathbb{R}^d$, 
the partial derivatives $\partial p_{\vartheta}(x|\bm{X}_{k-1})/\partial\vartheta_i$, $i=1,\dots,d$, 
exist and are finite, almost surely. 
As before, the ``true'' value of the time-varying  parameter 
at time moment $k\in\mathbb{N}_0$ is denoted by $\theta_k=\theta_k(\bm{X}_{k-1})$. 
Under these assumptions, the \emph{conditional} gradient vector
$\nabla_{\vartheta}\log p_{\vartheta}(x|\bm{X}_{k-1})$ 
and the random matrices $I_k(\vartheta|\bm{X}_{k-1})$, $k \in \mathbb{N}_0$, with entries
\[
I_{k,{i,j}}(\vartheta|\bm{X}_{k-1})=\EE_{\theta}\Big[\frac{\partial p_{\vartheta}(x|\bm{X}_{k-1})}{\partial\vartheta_i}
\cdot \frac{\partial p_{\vartheta}(x|\bm{X}_{k-1})}{\partial\vartheta_j}\Big], \quad i,j=1,\dots,d,
\]
can be defined, almost surely.
A possible gain function for the algorithm (\ref{eq:algorithm_main}) is simply the conditional score 
of the model, i.e., the gradient vector
\begin{align}
\label{eq:gain_canonical_1}
G_k(\vartheta,\bm{X}_k)=\nabla_{\vartheta}\log p_{\vartheta}(X_k|\bm{X}_{k-1}).
\end{align}
If $I_k(\vartheta|\bm{X}_{k-1})$ is almost surely non-singular in point $\hat{\theta}_k$, then one might also consider
\begin{align}
\label{eq:gain_canonical_2}
G_k(\vartheta,\bm{X}_k)=I_k^{-1}(\vartheta|\bm{X}_{k-1}) \nabla_{\vartheta}\log p_{\vartheta}(X_k|\bm{X}_{k-1}).
\end{align}

We now outline some heuristic arguments 
why these choices are reasonable.
Take $\vartheta,\theta \in \mathbb{}R^d$. It is not uncommon for the Kullback-Leibler divergence 
$K\big(P_{\theta}(\cdot|\bm{X}_{k-1}), P_{\vartheta}(\cdot|\bm{X}_{k-1})\big)$ to be a quadratic form 
in the distance between the parameters $\theta$ and $\vartheta$, i.e., equal to a multiple of 
$(\vartheta-\theta)^T \bm{M} (\vartheta-\theta)$ for some (eventually random) positive semi-definite matrix $\bm{M}$.
Actually, this is also in general true under some regularity conditions, at least locally, in a vicinity of the ``true'' $\theta$. 
For example, suppose that we can interchange integration 
and differentiation and that $\bm{M}$ does not depend on $\vartheta$, then
\begin{align}
g_k(\vartheta,\theta|\bm{X}_{k-1}) =
& \int\! \nabla_{\vartheta}\log p_{\vartheta}(x|\bm{X}_{k-1}) d P_{\theta}(x|\bm{X}_{k-1}) 
= \nabla_{\vartheta} \!\int \log p_{\vartheta}(x|\bm{X}_{k-1}) d P_{\theta}(x|\bm{X}_{k-1}\notag)\\
=&  \nabla_{\vartheta}\Big(\int \log \frac{p_{\vartheta}(x|\bm{X}_{k-1})}{p_{\theta}(x|\bm{X}_{k-1})} 
d P_{\theta}(x|\bm{X}_{k-1}) + \int \log p_{\theta}(x|\bm{X}_{k-1}) d P_{\theta}(x|\bm{X}_{k-1}) \Big)\notag\\
=& \nabla_{\vartheta} \int \log \frac{p_{\vartheta}(x|\bm{X}_{k-1})}{p_{\theta}(x|\bm{X}_{k-1})} 
d P_{\theta}(x|\bm{X}_{k-1})  =  - \nabla_{\vartheta} K\big(P_{\theta}(\cdot|\bm{X}_{k-1}), P_{\vartheta}(\cdot|\bm{X}_{k-1})\big)\notag\\
\label{KL-gain}
=& -\nabla_{\vartheta} (\vartheta-\theta)^T \bm{M} (\vartheta-\theta) = - 2 \bm{M} (\vartheta-\theta).
\end{align}

The score  in principle depends on the past of the time series $\bm{X}_{k-1}$ and the previous 
argument might only be valid for a certain subset of values $\bm{X}_{k-1}$ in $\mathcal{X}^{k-1}$.
This dependence could prevent (A1) from holding.
In such cases, using a gain of the form (\ref{eq:gain_canonical_2}) might be a good alternative since 
the matrix $I_k^{-1}(\vartheta|\bm{X}_{k-1})$ acts as an appropriate scaling factor.

The dependence of the gain function on the past of the time series is in fact one of the main issues one has 
to deal with when checking (A1) and (A2). On one hand, to ensure that the gain function has, on average, 
the right direction, as required by (\ref{eq:g}), the gain will often need to depend on previous observations.
This might, however, affect either the range or the variance of the gain.
Gain functions, such as  (\ref{eq:gain_canonical_1}) and (\ref{eq:gain_canonical_2}), can be modified, 
or rescaled, to ensure that the respective conditional expectation $g_k(\vartheta,\theta|\bm{X}_{k-1})$ 
verifies the assumptions of Theorem \ref{theo:bound}.
One can for example truncate\index{Truncation} certain entries or factors in both $G_k(\vartheta,\bm{X}_k)$ 
and $I_k(\vartheta|\bm{X}_{k-1})$ to ensure that the resulting $g_k(\vartheta,\theta|\bm{X}_{k-1})$ meets the required assumptions.
Another possibility is to rescale, or directly truncate, the length of a given gain vector and consider, for example, one of the following gains
\begin{align*}
\label{eq:scaled_gains}
{\tilde G}_k(\vartheta,\bm{X}_k) &= \frac{G_k(\vartheta,\bm{X}_k)}{1+\|G_k(\vartheta,\bm{X}_k)\|},\\
{\mathring G}_k(\vartheta,\bm{X}_k) &= G_k(\vartheta,\bm{X}_k) 
\Big[1+\frac{\kappa - \|G_k(\vartheta,\bm{X}_k) \|}{\|G_k(\vartheta,\bm{X}_k)\|} 
\mathbb{I}\{\|G_k(\vartheta,\bm{X}_k)\|\ge\kappa\} \Big],\\
{\bar G}_k(\vartheta,\bm{X}_k) &= G_k(\vartheta,\bm{X}_k) \frac{\min\{s_k(\bm{X}_{k-1}), \kappa \} }{s_k(\bm{X}_{k-1})},
\end{align*}
for $G_k$ an arbitrary gain function, $\kappa>0$ and some functions $s_k:\mathcal{X}^{k-1}\mapsto\mathbb{R}_+$.
Note that ${\tilde G}_k$, ${\mathring G}_k$ and ${\bar G}_k$ all preserve the direction of $G_k$ 
and have norm bounded by respectively $1$, $\kappa$ and the norm of $G_k$, almost surely.

The gain ${\bar G}_k$ is a rescaling of $G_k$ for situations when the corresponding conditional gain $g_k$ is
of the form $g_k(\vartheta,\theta|\bm{X}_{k-1})= -s(\bm{X}_{k-1}) \bm{M}_k(\vartheta-\theta)$, where $\bm{M}_k$ 
has eigenvalues as prescribed by (A1).
Consequently the conditional rescaled  gain is 
\[
{\bar g}_k = -\min\{s(\bm{X}_{k-1}),\kappa\} \bm{M}_k(\vartheta-\theta),
\]
so that the largest eigenvalue of the matrix $\min\big(s(\bm{X}_{k-1}), \kappa\big) \bm{M}_k$ 
is almost surely upper bounded. As to the lower bound, 
in certain situations it will be possible to show that $\EE[\min\big(s(\bm{X}_{k-1}), 
\kappa\big)\Lambda_{(1)}(\bm{M}_k)|\bm{X}_{k-2}]\ge c\lambda_1$ almost surely,
for some $0<c\le1$ and sufficiently large $\kappa$,
by using the fact that $\EE[\Lambda_{(1)}(\bm{M}_k)|\bm{X}_{k-2}]\ge\lambda_1$  almost surely.
This would establish condition (A1)  for the rescaled gain ${\bar G}_k$. 
Since $\min(x,\kappa)/x\le 1$ for all $x\in \mathbb{R}_+$, then
\begin{align}
& \EE \EE \big[\|{\bar G}_k- {\bar g}_k\|^2\big|\bm{X}_{k-1} \big]\notag\\
&\quad\quad\quad 
\label{eq:scaled_gain}
=\EE\Big[ \Big(\frac{\min\big( s(\bm{X}_{k-1}), \kappa \big)}{s(\bm{X}_{k-1})}\Big)^2 
\EE \big[\|G_k - g_k\|^2\big|\bm{X}_{k-1} \big] \Big] \le
\EE \|G_k - g_k\|^2.
\end{align}
Thus, if $G_k$ verifies (A2), then so does ${\bar G}_k$.

Another possible modification one might consider is to truncate 
the iterates of the our algorithm (\ref{eq:algorithm_main}). This might be motivated by 
practical considerations in the case where the parameter being tracked has some 
physical meaning and is bounded for that reason. 
The algorithm should be restricted as well. We  can then consider an algorithm of the form
\begin{equation}
\label{eq:algorithm_truncated}
\hat{\theta}_{k+1} = \Pi_{\bar\Theta}\big(\hat{\theta}_k + \gamma_k G_k(\hat{\theta}_k,\bm{X}_k)\big), 
\quad k \in \mathbb{N}_0,
\end{equation}
where $\Pi_{\bar\Theta}(\cdot)$ acts as a projection on a convex compact set 
$\bar\Theta\supset\Theta$: $\Pi_{\bar\Theta}(\cdot)$ is an identity on 
$\bar\Theta$ and maps any point from $\bar\Theta^c$ to the closest point in $\bar\Theta$.
%

We provide concrete examples of gain functions later in Section~\ref{sec:examples}.
In Section~\ref{sec:variational_setups}, we present  some examples 
of different types of parameter variation such that our algorithm is capable of adequately tracking 
the time-varying parameter.

\section{Variational setups for the drifting parameter}
\label{sec:variational_setups}

It is clear -- and in fact explicit in (\ref{eq:boundE}) and (\ref{eq:boundAS}) -- 
that the changes in the parameter\index{Parameter!time-changing} have 
a non-negligible contribution to the accuracy of our tracking algorithm.
This is reasonable since, if the parameter changes arbitrarily in-between observations, 
we should not expect it to the ``trackable". We should then specify how the parameter 
is allowed to vary and, based on that assumption, pick an appropriate sequence 
$\gamma_k$ which minimizes the general bounds in (\ref{eq:boundE}) or (\ref{eq:boundAS}).
In this section, we specify different settings  
for the variation of the parameter to be tracked. These settings 
refer only to how the 
parameter is assumed to change and are unrelated to the actual model in question;
examples of specific models can be found in Section~\ref{sec:examples}.

To avoid overloaded notations, we use letters $C$ and $c$ for constants 
whose values are not important to us and
which can be different in different expressions.

\subsection{Static parameter}
\label{sec:variational_setups:constant}

We assume in this section that $\theta_i(\bm{X}_{i-1}) = \theta_0$, $i\in\mathbb{N}_0$, almost surely, 
for some unknown $\theta_0\in\Theta$ so that $\Delta\theta_i=0$ (zero vector) for all $i\in\mathbb{N}_0$, 
almost surely, and we are actually in a parametric setup. In this case the second terms in both 
(\ref{eq:boundE}) and (\ref{eq:boundAS}) obviously vanish.

Take then $\gamma_i=C_\gamma i^{-1}\log i$ and for $q\in(0,1)$, 
$n_0 = [q n]$, where $[a]$ is the whole part of $a\in\mathbb{R}$.
Let $n\ge2/q=N_q$ such that $n_0\ge2$.
For any $c>0$ there is a large enough $C_\gamma$  such that, for all $n\ge N_q$,  
\[
\sum_{i=n_0}^n \gamma_i \ge C_\gamma \log n_0\sum_{i=n_0}^n \frac{1}{i}\ge 
C_\gamma \log n_0\sum_{i=n_0}^n \log\Big[1+\frac{1}{i}\Big]
= C_\gamma \log n_0 \log\Big[\frac{n+1}{n_0}\Big]
\ge c \log n.
\]
Moreover, it is easy to see that,  under the conditions of  Theorem \ref{theo:bound2}, 
$\EE \|\delta_{n_0}\|_p^p \le C_0 n_0^p$.
Thus, in both (\ref{eq:boundE}) and (\ref{eq:boundAS}) the first term can be upper bounded by
$Cn^{-c}$ for any $c>0$ by taking sufficiently large $C_\gamma$.
Next note that  
$\Big[\sum_{i=n_0}^n\gamma_i^2 \Big]^{p/2}\le C (n^{-1/2} \log n)^p$, 
We conclude that, for a sufficiently large $C_\gamma$, we can rewrite (\ref{eq:boundE}) 
and (\ref{eq:boundAS}) as respectively,
\begin{equation}
\label{constant_parameter}
\max_{n\ge N_q} \EE  \frac{\sqrt n}{\log n}\|\delta_n\| \le C\quad\mbox{and}\quad
\max_{n\ge N_q} \EE  \Big[\frac{\sqrt n}{\log n}\|\delta_n\|_p\Big]^p \le C, \quad p\ge1.
\end{equation}
If we let $n\to \infty$, this is almost (up to a log factor) parametric convergence rate, the $\log$-factor 
in the rate cannot be avoided and is in some sense a price for the recursiveness of the algorithm.

If we are in the situation of Theorem \ref{theo:bound2}, then 
by taking $p>\epsilon^{-1}$ (where $\epsilon>0$ is some small fixed number) 
and by using Markov's inequality  
and the second bound in the previous display, we derive that
\begin{align}
\sum_{n=1}^\infty P \big( n^{1/2-\epsilon} \|\hat{\theta}_n & - \theta_0\|_1 > c \big)
\le \sum_{n=1}^\infty P \big( d^{\frac{p-1}p}n^{1/2-\epsilon} \|\hat{\theta}_n - \theta_0\|_p > c \big) \notag\\
\label{eq:almost_sure}
&\le \sum_{n=1}^\infty \frac{d^{p-1}n^{p/2-p\epsilon} 
\EE  \|\delta_n\|_p^p}{c^p} \le C \sum_{n=1}^\infty \frac{(\log n)^p}{n^{p\epsilon}} < \infty.
\end{align}
In view of the Borel-Cantelli Lemma\index{Borel-Cantelli Lemma}, it follows 
that $\|\hat{\theta}_n - \theta_0\|_1\to0$ as $n\to0$ with 
probability 1 at a rate $n^{1/2-\epsilon}$. 

\begin{remark}
The particular setup presented in this section, where the parameter is fixed, might seem out 
of place since we are mainly concerned with tracking time-changing parameters.
We would like to point out that recursive algorithms in parametric situation can also be useful;
for example, the classical Robbins-Monro and Kiefer-Wolfowitz algorithms deal with the parametric case.
Recursive procedures often produce estimates in a fast, straightforward fashion.
This is an advantage especially over ``offline" estimators obtained, say, as 
solutions to a certain system, which require iterative likelihood or least squares optimization 
or are obtained via other indirect methods, a situation 
which is common when dealing with Markov models (cf. Section~\ref{sec:examples:ARd}.)
\end{remark}

\subsection{Stabilizing parameter}
\label{sec:variational_setups:stabilizing}

Suppose now that the parameter we want to track is stabilizing.
This situation might arise if the expectation of the sequence of values that the parameter 
takes is converging to some limiting value. It could also be the case that the data is being sampled
with increasing frequency from an underlying, continuous time process which depends 
on a parameter varying continuously; in this case, the parameter varies less because it has less time to change.
Regardless, we assume that $\Delta\theta_i = \theta_{i}(\bm{X}_{i-1}) - \theta_{i+1}(\bm{X}_i)$ verifies
\[
\EE  \|\Delta\theta_i \|_p^p \le \rho_i^p, \quad i\in\mathbb{N}_0,
\]
for $p\ge1$ and some positive sequence $\rho_i$.
Assume that $\rho_i = c_\rho i^{-\beta}$ for some $c_\rho>0$ and $\beta\ge0$.

Consider first the case $\beta\ge3/2$.
In this case, the variation of the parameter vanishes so quickly that we are essentially 
in the setup of the previous section, i.e., as if the parameter is constant.
Indeed, take $\gamma_i$ and $n_0$ as in the previous section.
The first and second terms in both (\ref{eq:boundE}) and (\ref{eq:boundAS}) 
can be bounded in the same way as in the previous section.
Using the relations between norms from Remark \ref{rem:norms}, 
we upper bound the third term in (\ref{eq:boundE}) by a multiple of
\begin{equation}
\label{eq:bound00}
\EE \sum_{i=n_0}^n \|\Delta\theta_i\| \le c (n-n_0) \rho_{n_0}\le C (n-n_0) n^{-\beta} \le C n^{-1/2}.
\end{equation}
Using the H\"older inequality, we upper bound the third term in  (\ref{eq:boundAS}) by a multiple of 
\begin{align}
\EE  \Big( \sum_{i=n_0}^n \|\Delta\theta_i\|_p \Big)^p&
\le (n-n_0)^{p-1} \sum_{i=n_0}^n \EE \|\Delta\theta_i\|_p^p
\le C(n-n_0)^p \rho_{n_0}^p \notag\\
\label{eq:bound01}
&\le c\big[(n-n_0)n_0^{-\beta}\big]^p \le C n^{-(\beta-1)p} \le C n^{-p/2}.
\end{align}
Clearly, in both (\ref{eq:boundE}) and (\ref{eq:boundAS}) the third term is of a smaller 
order than the second term. Thus, the relations 
(\ref{constant_parameter}) remain valid for the case $\beta\ge3/2$.

Consider now the case $0 < \beta < \frac{3}{2}$.
Let\index{Step size} $\gamma_i = C_\gamma(\log i)^{1/3} i^{-2\beta/3}$, $n_0 = n-n^{2\beta/3}(\log n)^{2/3}$.
By using the elementary inequality $(1+x)^\alpha \le 1+\alpha x$ for 
$0<\alpha <1$ and $x\ge -1$, we obtain that for any $c>0$ there is a sufficiently large constant $C_\gamma>0$ 
such that 
\begin{align*}
\sum_{i=n_0}^n \gamma_i & \ge C_\gamma (\log n_0)^{1/3} 
\sum_{i=n_0}^n \frac{1}{i^{2\beta/3}} \ge C_\gamma (\log n_0)^{1/3} \int_{n_0}^n \frac{dx}{x^{2\beta/3}}\\ 
&=\frac{C_\gamma (\log n_0)^{1/3}}{1-2\beta/3}
\Big[n^{1-2\beta/3} -n^{1-2\beta/3} \big(1-n^{2\beta/3-1} (\log n)^{2/3}\big)^{1-2\beta/3}\Big]\\ 
&\ge\frac{C_\gamma (\log n_0)^{1/3}}{1-2\beta/3}
\Big[n^{1-2\beta/3} -n^{1-2\beta/3} \big(1-n^{2\beta/3-1} (\log n)^{2/3}(1-2\beta/3)\big)\Big]\\
& = C_\gamma (\log n_0)^{1/3} (\log n)^{2/3} \ge c \log n 
\end{align*}
for sufficiently large $n$, i.e., $n\ge N_1=N_1(\beta)$.
This yields the same upper bound for the first term in (\ref{eq:boundE}) and (\ref{eq:boundAS})
as for the static parameter, namely, $Cn^{-c}$ for any $c>0$ by taking sufficiently large $C_\gamma$.
Let us bound now the second term in (\ref{eq:boundE}) and (\ref{eq:boundAS}):
\[
\Big(\sum_{i=n_0}^n \gamma_i^2 \Big)^{1/2} \le 
C\big((\log n)^{2/3}n_0^{-4\beta/3}(n-n_0)\big)^{1/2}\le c (\log n)^{2/3} n^{-\beta/3}
\]
for  $n\ge N_2=N_2(\beta)$.
For sufficiently large  $n$ (i.e., $n\ge N_3=N_3(\beta)$) the third terms in 
(\ref{eq:boundE}) and (\ref{eq:boundAS}) are bounded similarly to (\ref{eq:bound00}) and 
(\ref{eq:bound01}) by, respectively,
\[
\EE \sum_{i=n_0}^n \|\Delta\theta_i\| \le c (n-n_0) \rho_{n_0}\le C (n-n_0) n^{-\beta} 
\le C (\log n)^{2/3} n^{-\beta/3}
\]
and
\[
\EE  \Big(\sum_{i=n_0}^n \|\Delta\theta_i\|_p \Big)^p\le c \big((n-n_0)n_0^{-\beta}\big)^p 
\le  C\big((\log n)^{2/3} n^{-\beta/3}\big)^p.
\] 
Finally we obtain that for $0<\beta <3/2$ and sufficiently large constant 
$C_\gamma$ in the algorithm step $\gamma_i = C_\gamma(\log i)^{1/3} i^{-2\beta/3}$, 
(\ref{eq:boundE}) and (\ref{eq:boundAS}) can be rewritten as respectively
\[
\max_{n\ge N_\beta} \EE \frac{n^{\beta/3}}{(\log n)^{2/3}} \|\delta_n\|_2 \le C
\quad \mbox{and} \quad \max_{n\ge N_\beta} \EE \Big[\frac{n^{\beta/3}}{(\log n)^{2/3}} \|\delta_n\|_p\Big]^p
\le c,
\]
where $N_\beta=\max(N_1,N_2,N_3)$ is the burn-in\index{Burn-in} period of the algorithm.

\begin{remark} 
If we choose $\gamma_i = C_\gamma(\log i)^{\alpha_1} i^{-\alpha}$ and $n_0 
= n - n^{\alpha}(\log n)^{\alpha_2}$, $0<\alpha<1$, $\alpha_1, \alpha_2\ge 0$, 
$\alpha_1+\alpha_2 \ge 1$ in case $0<\beta <3/2$, then we get the following 
bound of the convergence rate: for sufficiently large $n$ and sufficiently large  constant $C_\gamma$ 
\[
\EE \|\delta_n\|_p^p \le C\Big(n^{-\min\{\beta-\alpha,\alpha/2\}}
(\log n)^{\max\{\alpha_2, \alpha_1+\alpha_2/2\}}\Big)^p.
\]
Thus, the choices $\alpha=2\beta/3$, $\alpha_1=1/3$, $\alpha_2=2/3$ are 
optimal in the sense of the minimum of the right-hand side of the above inequality.
\end{remark}

\begin{remark}
Much in the same way as for  (\ref{eq:almost_sure}), we can establish that  for any 
$\epsilon>0$, $\lim_{n\to \infty} n^{\beta/3-\epsilon}\|\delta_n\|_1 = 0$ with probability 1.
\end{remark}

Finally, consider the case  $\beta=0$, i.e., we assume the following weak requirement:
$\EE \|\Delta\theta_i\|_p^p \le c$, $i\in \mathbb{N}_0$, for some uniform constant $c$.
Take  $n-n_0 = N$, $\gamma_i=\gamma$ for some $N\in \mathbb{N}$, $\gamma>0$.
Then Theorem \ref{theo:bound} implies  that 
\[
\max_{n\ge N}  \EE \|\delta_n\|_p^p \le C_1 e^{-p h N\gamma} + C_2 N^{p/2} \gamma^p+C_3 N^p c = D.
\]
We thus have that the algorithm will track down the parameter in the proximity of size $D$, 
which we can try to minimize by choosing appropriate constants $N$ and $\gamma$.

\subsection{Lipschitz signal with asymptotics in the sampling frequency}
\label{sec:variational_setups:lipschitz}

We consider now a different setup where we assume that the parameter is changing, 
on average, like a Lipschitz\index{Function!Lipschitz} function.
In this setup we let the time series (\ref{model}) be sampled from a continuous 
time process $X_t$, $t\in[0,1]$, which we observe with frequency $n$.
This means that we deal with a triangular sequence of models, i.e., 
for each $n\in\mathbb{N}$ we have a different model, namely,
\begin{equation}
\label{model_continuous}
 X_0^n\sim P_{\theta_0^n}, \qquad X_k^n|\bm{X}_{k-1}^n 
\sim P_{\theta_k^n}(\cdot|\bm{X}_{k-1}^n),\quad k\le n \in \mathbb{N},
\end{equation}
where the parameter $\theta_k^n=\theta_k^n(\bm{X}_{k-1}^n)$ verifies, 
for some $p\ge 1$, $\kappa_{d,p}<\infty$,  $0<\beta\le1$, 
\[
\EE \|\theta_k^n(\bm{X}_{k-1}^n) -\theta_{k_0}^n(\bm{X}_{k_0-1}^n)\|_p^p 
\le \kappa_{d,p}^p\Big(\frac{k-k_0}n\Big)^{\beta p}.
\]
Assume for example that $\theta_k^n(\bm{X}_{k-1}^n) = \vartheta(k/n)$, where $\vartheta(\cdot)\in
\mathcal{L}(L, \beta)=\{g(\cdot) : \|g(t_1)-g(t_2)\|_1 \le L|t_1-t_2|^\beta, t_1,t_2\in[0,1]\}$ 
for some $0<\beta\le1$ and $L>0$, a space of vector valued Lipschitz functions.

Let $\gamma_k = C_\gamma (\log n)^{(2\beta-1)/(2\beta+1)}n^{-2\beta/(2\beta+1)}$  
(constant in $k$) for $k=1, \dots, n$, and
\[
k_0=k_0(n) = k-(\log n)^{2/(2\beta+1)}n^{2\beta/(2\beta+1)},
\]
for $k\ge K_n=(\log n)^{2/(2\beta+1)}n^{2\beta/(2\beta+1)}$.
Note that for $K_n/n\to0$ as $n\to\infty$ for any $0<\beta\le1$.
We have
\[
\sum_{i=k_0}^k \gamma_i=C_\gamma (\log n)^{(2\beta-1)/(2\beta+1)}n^{2\beta/(2\beta+1)} (k-k_0) 
\ge C_\gamma \log n, 
\]
so that once again the first term in (\ref{eq:boundE}) and (\ref{eq:boundAS})
can be upper bounded by $Cn^{-c}$ for any $c>0$ by taking sufficiently large $C_\gamma$.
As to the second term, we evaluate
\[
\Big(\sum_{i=k_0}^k \gamma_i^2\Big)^{1/2}	\le
C(\log n)^{\frac{2\beta-1}{2\beta+1}}n^{-\frac{2\beta}{2\beta+1}}(k-k_0)^{1/2} 
=C(\log n)^{\frac{2\beta}{2\beta+1}}n^{-\frac{\beta}{2\beta+1}}.
\]
From our assumption on the variation of the parameter, we have
\begin{align*}
\max_{k_0\le i \le k}\EE \|\theta_{i+1}^n-\theta_{k_0}^n\|_p^p  \le 
c \Big(\frac{k-k_0}n\Big)^{\beta p} \le 
C\Big[(\log n)^{\frac{2\beta}{2\beta+1}} n^{-\frac\beta{2\beta+1}} \Big]^p.
\end{align*}
Combining these three bounds, we get that (\ref{eq:boundE}) (we also need  
the relations between norms from Remark \ref{rem:norms}) and (\ref{eq:boundAS}) imply
\begin{align*}
\sup_{\vartheta\in\mathcal{L}(L,\beta)} \max_{i\ge K_n} \EE  \|\delta_i\| 
&\le C (\log n)^{\frac{2\beta}{2\beta+1}}n^{-\frac{\beta}{2\beta+1}}, \\
\sup_{\vartheta\in\mathcal{L}(L,\beta)} \max_{i\ge K_n} \EE  \|\delta_i\|_p^p 
&\le C \Big[(\log n)^{\frac{2\beta}{2\beta+1}}n^{-\frac{\beta}{2\beta+1}} \Big]^p.
\end{align*}

\begin{remark}
If we consider step sizes of the form $\gamma_k= C_\gamma (\log n)^{\alpha_1}n^{-\alpha_2}$,
the above proposed choices of $\alpha_1$ and $\alpha_2$ are optimal in the sense of tracking error minimum.
\end{remark}
\begin{remark}
Note that the obtained convergence rate (the asymptotic regime: the observation frequency $n\to \infty$) 
coincides, up to a log factor, with the minimax rate of convergence in the problem 
of estimating nonparametric regression function over Lipschitz functional class 
$\mathcal{L}(L, \beta)$.
\end{remark}

\section{Some applications of the main result}
\label{sec:examples}

%
%
%
%
%
%

In this section we present some examples of particular models to which our algorithm may be applied.
We start with two toy examples and present thereafter some more involved examples.
The toy examples illustrate the type of results that can be obtained from our main result and its extensions, 
how a gain function can be picked and modified, and how conditions (A1) and (A2) are checked.

\subsection{Tracking the intensity function of a Poisson process}
\label{sec:examples:poisson}

Suppose we are monitoring $n\in\mathbb{N}$ independent Poisson processes on $[0,1]$ with unknown 
intensity function $\lambda(\cdot)$. 
This is equivalent to observing $N(t)=N(t,n)$, a Poisson process with intensity $n\lambda(t)$, $0\le t \le1$.
We would like to track the intensity function $\lambda(\cdot)$ which is assumed to be 
upper bounded by $L$.

Assume that we observe the process with frequency $n$, in that our observations are 
$X_k^n = N(k/n)$, so that for each $n\in\mathbb{N}$ we have a Markov model
\[
X_0^n = 0,	\quad	X_{k+1}^n|X_k^n \sim P_{\theta_k^n}(\cdot | X_k^n) = P_{\theta_k^n}(\cdot - X_k^n), \quad k=1, \dots, n,
\]
where $P_\theta(\cdot)$ represents a Poisson law with parameter $\theta\in\mathbb{R}^+$.
From now on, we will skip the dependence on $n$ for notational simplicity: 
write $X_k$ instead of $X_k^n$, $\theta_k$ instead of $\theta_k^n$ etc.
Introduce the conditional, shifted Poisson mass function given by
\[
p_\theta(x|y) =\frac{e^{-\theta}\theta^{x-y}}{(x-y)!}, \quad x,y\in\mathbb{N}, \; x\ge y.
\]
The moving parameter is given by
$\theta_k=\theta_k^n=\int_{(k-1)/n}^{k/n} n \lambda(t) dt$, $k=1, \dots,n$,
which is the average of function $\lambda(t)$ over the interval $[(k-1)/n, k/n]$. 
Assume that $\lambda(t)$ is continuous, then 
$\theta_k^n \approx \lambda(k/n)$ for $n$ large enough.

Consider now the gain function $G_k$ of the type (\ref{eq:gain_canonical_2})  
for the algorithm (\ref{eq:algorithm_main}) so that
\begin{align}
\label{eq:gain_poisson}
G_k(\hat{\theta}_k,\bm{X}_k) & =X_k - X_{k-1} - \hat{\theta}_k,\\
g_k(\hat{\theta}_k,\theta_k|\bm{X}_{k-1})& 
=\EE [X_k - X_{k-1}- \hat{\theta}_k |\bm{X}_{k-1}] =- (\hat{\theta}_k-\theta_k), \notag
\end{align}
It follows that
\[
\EE |G_k(\hat{\theta}_k, \bm{X}_k) - g_k(\hat{\theta}_k,\theta_k|\bm{X}_{k-1})|^2 \le 
2\EE |X_k- X_{k-1}|^2 +2 |\theta_k|^2 \le C,
\]
since $\sup_{t\in[0,1]}\lambda(t)\le L$. We thus conclude 
that the gain function (\ref{eq:gain_poisson}) satisfies both (A1) and (A2).

This gain function can now be used for the three setups outlined in Section~\ref{sec:variational_setups} 
and we can attain the rates indicated there.
For a constant intensity function $\lambda(\cdot)=\theta$, $0<\theta\le L$, 
the algorithm will simply estimate the parameter of the underlying homogeneous Poisson process $\theta$,
because we matched the sampling frequency $1/n$ with the sample size $n$.
If we had sampled the process with frequency, say, $2/n$, then $\theta_k=2\theta$ and
the algorithm would track $2\theta$ and not $\theta$.
The tracking sequence would then have to be rescaled 
by a factor $1/2$ to obtain a tracking sequence for $\theta$ itself.

In the setup where we assume that the parameter is stabilizing, take $n=1$ so that
$\theta_k=\int_{k-1}^k \lambda(t)\, dt$ is the 
mean number of events per time unit $[(k-1)/n, k/n]$. Note that
\[
|\Delta\theta_k|=\Big|\int_{k-1}^k \lambda(t)dt - \int_k^{k+1} \lambda(t) dt\Big| 
=|\theta_k- \theta_{k+1}|,
\]
and the average number of events per time unit will stabilize in time if, for example, $\lambda(t) \to \lambda$ as $t\to \infty$.
The algorithm will then track the mean number of events per time unit.

We can also assume that the intensity function $\lambda(\cdot)$ belongs 
to $\mathcal{L}(L, \beta)=\{g(\cdot) : |g(t_1)-g(t_2)| \le L|t_1-t_2|^\beta, t_1,t_2\ge0\}$ 
for some $0<\beta\le1$ and $L>0$. Let $\vartheta_k=\vartheta_k^n = \lambda(k/n)$, $k,n\in\mathbb{N}$.
It follows that
\begin{align*}
\big|\Delta\vartheta_k\big|& =\big|\lambda\big(k/n\big) - \lambda\big((k+1)/n\big)\big| \le L\, n^{-\beta},\\
\big|\theta_k-\vartheta_k\big|& =\Big| \int_{(k-1)/n}^{k/n} n \lambda(t)\, dt - \lambda(k/n) \Big| 
\le n \int_{(k-1)/n}^{k/n} \big|\lambda(t)-\lambda(k/n)\big|\, dt \le L\, n^{-\beta}.
\end{align*}
The tracking sequence based on the gain (\ref{eq:gain_poisson}) will then track the sequence 
$\vartheta_k=\lambda(k/n)$, $k,n\in\mathbb{N}$ (as well as $\theta_k$) with 
the asymptotics seen in Section \ref{sec:variational_setups} (cf.\ Remark~\ref{rem:close_parameter}).

\subsection{Tracking the mean function of a conditionally Gaussian process}
\label{sec:examples:gaussian}

Assume that we observe, with fixed frequency $n\in\mathbb{N}$, a process $X(t)$, $t\ge 0$, 
taking values on $\mathcal{X}\subset\mathbb{R}^d$, $d\in\mathbb{N}$.
The observations available  up to time moment $k/n$ is a random vector 
$\bm{X}_k=\bm{X}_k^{n} = \big(X_0, X_1, \dots, X_1\big)$, with $X_k =X_k^n= X(k/n)$.
We again skip the dependence on $n$, although all the quantities below do depend on $n$.
The increments $X_k- X_{k-1}$ are assumed to be conditionally Gaussian 
in the sense that given the past of the process, each increment has a multivariate normal distribution:
\[
X_0 \sim N\big(\theta_0,\bm{\Sigma}_0\big),\quad	
X_{k+1}| \bm{X}_k \sim N\big(\theta_k(\bm{X}_{k-1}),\bm{\Sigma}_k(\bm{X}_{k-1})\big),
\quad k=1,\dots,n.
\]
The dependence on the past in the model comes from the fact that both the mean and the 
covariance processes of the above conditional distributions are predictable,
i.e., $\theta_k=\theta_k(\bm{X}_{k-1})$ and $\bm{\Sigma}_k=\bm{\Sigma}_k(\bm{X}_{k-1})$, $k\in\mathbb{N}_0$,
with respect to the filtration $\{\mathcal{F}_k\}_{k \in\mathbb{N}_{-1}}$.

If the covariance structure of the process is known, 
we can use the gain\index{Gain function} (\ref{eq:gain_canonical_1}) which verifies 
 \begin{align}
\label{eq:gain_gaussian1}
G_k(\vartheta,\bm{X}_k)
=\bm{\Sigma}_k^{-1}(X_k - \vartheta) \quad \mbox{and} \quad
g_k(\vartheta,\theta_k|\bm{X}_{k-1})
=-\bm{\Sigma}_k^{-1} (\vartheta - \theta_k).
\end{align}
For this gain, we assume that almost surely
\[
0<\lambda_1 \le \Lambda_{(1)}(\bm{\Sigma}_k) \le \Lambda_{(d)}(\bm{\Sigma}_k) \le \lambda_2<\infty, \quad k \in \mathbb{N}_0,
\]
for some positive $\lambda_1 <\lambda_2$.
We then obtain that 
\[
\EE \|G_k(\hat{\theta}_k, \bm{X}_k) - g_k(\hat{\theta}_k,\theta_k|\bm{X}_{k-1})\|^2= 
\EE \|\bm{\Sigma}_k^{-1}(X_k - \theta_k) \|^2 \le \big(\lambda_2/\lambda_1\big)^2 = C, 
\]
and assumptions (A1) and (A2) are thus met for the gain from (\ref{eq:gain_gaussian1}).

Now suppose that the covariance matrix of the process is unknown or difficult to invert.
Then we can use the gain (\ref{eq:gain_canonical_2}), so that 
\begin{equation}
\label{eq:gain_gaussian2}
G_k(\vartheta,\bm{X}_k)=\bm{X}_k - \vartheta, \quad
g_k(\vartheta,\theta_k|\bm{X}_{k-1})=-(\vartheta - \theta_k).
\end{equation}
Clearly, assumptions (A1) and (A2) are again met for the gain from (\ref{eq:gain_gaussian2}) 
if $\Lambda_{(d)}(\bm{\Sigma}_k) \le C$ for some $C>0$, $k \in \mathbb{N}_0$, almost surely.

The results of Section~\ref{sec:variational_setups} can be applied to the algorithm based 
on the gain functions presented above for all three considered asymptotic regimes:
constant parameter process, stabilizing (on average) process and Lipschitz on average.

\begin{remark}
Although designed for different frameworks, it is interesting to compare the above 
resulting tracking algorithm with the famous {\em  Kalman filter}.
For simplicity, consider the one dimensional situation. Suppose we observe
\begin{equation}
\label{model_kalman}
 X_k= \theta_k + \xi_k, \quad \xi_k \sim N(0, \sigma^2_\xi),  \quad \quad k \in \mathbb{N},
\end{equation}
where the parameter of interest $\theta_k$, evolves according to 
\[
\theta_k = \theta_{k-1} + \delta_k \varepsilon_k \quad
\varepsilon_k \sim N(0,1),  \quad \quad k \in \mathbb{N},
\] 
with $\theta_0\sim N(m_0,\sigma^2_0)$.
At each step, the initial state and the noises
$\theta_0, \xi_1, \ldots, \xi_k, \varepsilon_1, \ldots, \varepsilon_k $ are assumed to be mutually independent.
One can show (by combining both prediction and update steps) that the Kalman filter in this case reduces to
\begin{align}
\label{kalman_algorithm}
\hat{\theta}_k &= \hat{\theta}_{k-1} + \gamma_k(X_k- \hat{\theta}_{k-1}), \quad
\hat{\theta}_0 = 
m_0, \qquad k \in \mathbb{N},\\
\label{kalman_step}
\gamma_k& = \frac{\gamma_{k-1} + \delta_k^2/\sigma^2_\xi}{\gamma_{k-1} + \delta_k^2/\sigma^2_\xi+1},
\quad  \gamma_0 = 
\frac{\sigma^2_0}{\sigma^2_\xi}, \qquad k\in\mathbb{N}.
\end{align}
We also derive the exact expression for the mean squared error of the algorithm:
\[
\mathbb{E}(\hat{\theta}_k - \theta_k)^2 = \sigma^2_\xi \gamma_k, \quad k\in\mathbb{N}.
\] 

Coming back to our framework, suppose we have observations  (\ref{model_kalman}) 
with predictable process $\{\theta_k\}_{k\in\mathbb{N}_0}$ such that 
$\mathbb{E}(\theta_k-\theta_{k-1})^2 \le \delta^2_k$, $k\in\mathbb{N}$; 
cf.\ Section \ref{sec:variational_setups:stabilizing}. Then the Kalman filter 
(\ref{kalman_algorithm}) coincides with our tracking algorithm with the
gain (\ref{eq:gain_gaussian1}) and a particular choice of the step sequence $\gamma_k$ 
given by (\ref{kalman_step}).   One should keep in mind that 
the two frameworks are different, but it would still be interesting to compare 
the convergence rates for some particular settings for
stabilizing the parameter $\theta_k$. For example, one can consider 
$\delta_k = c k^{-\beta}$, $0<\beta< 3/2$, as in  Section \ref{sec:variational_setups:stabilizing}.
The above Kalman filter setting has more structure and we expect therefore that the rate in this case  
(which is of order $\sqrt{\gamma_n}$, with
$\gamma_n$ defined by (\ref{kalman_step}))
should be faster than 
the rate $(\log n)^{2/3}/n^{\beta/3}$ obtained in Section \ref{sec:variational_setups:stabilizing}
for our general framework.
We were however unable to solve the recursive rational difference equation (\ref{kalman_step}) 
for $\delta_k = c k^{-\beta}$. Note that the trivial case $\delta_k =0$ leads to the situation 
of a constant parameter $\theta_k= \theta$ and the sample mean $\hat{\theta}_k = \bar{X}_k$
as an estimator for that parameter.
\end{remark}

\subsection{Tracking an ARCH(1) parameter}
\label{sec:examples:ARCH1}

Consider the following ARCH($1$) model with drifting parameter 
\begin{equation}
\label{eq:arch_d_model}
X_k=(1 + \theta_k X_{k-1}^2)^{1/2} \epsilon_k, \quad k \in \mathbb{N},
\end{equation}
where $|X_0| \le 1$ almost surely, $\{\theta_k\}_{k\in\mathbb{N}}$ is predictable and 
$\{\epsilon_k\}_{k\in\mathbb{N}}$ is a martingale difference noise 
with respect to the filtration $\{\mathcal{F}_k\}_{k \in\mathbb{N}_0}$, 
$\mathbb{E}[\epsilon^2_k| \mathcal{F}_{k-1}]=\sigma_k^2$ 
for some known $\sigma_k^2$, $k\in \mathbb{N}$.
Without loss of generality assume $\sigma_k^2=1$.
Assume further that  $0\le \theta_k \le C_\Theta^{1/2}$ and
$\EE[\epsilon_k^4| \mathcal{F}_{k-1} ] \le \rho$, $k\in\mathbb{N}$, for some $\rho>0$.  

Consider the gain function
\begin{equation}
\label{eq:gain_arch_1}
G_k(\vartheta,\bm{X}_k) = \frac{\min(X_{k-1}^2, T)}{X_{k-1}^2}(X_k^2-1-\vartheta X_{k-1}^2),
\end{equation}
for some truncating constant $T>0$.
Since $\EE X_k = 0$ and $\EE[ X_k^2|\bm{X}_{k-1}] = 1+\theta_k X_{k-1}^2$,  
\[
g_k(\vartheta, \theta_k|\bm{X}_{k-1}) = 
\EE \Big[ \frac{\min(X_{k-1}^2, T)}{X_{k-1}^2}(X_k^2-1-\vartheta X_{k-1}^2) \big|\mathcal{F}_{k-1}\Big] 
= - \min(X_{k-1}^2, T)(\vartheta-\theta_k).
\]
We have that  $\min(X_{k-1}^2, T)\le T$ almost surely.
Besides,
\begin{align*}
\EE \big[\min\{X_{k-1}^2, T\}\big|\bm{X}_{k-2}\big]
&=
\EE \big[\min\{(1+\theta_{k-1}X_{k-2}^2)\epsilon_{k-1}^2, T\} \big|\bm{X}_{k-2}\big] \\
&\ge\EE \big[\min(\epsilon_{k-1}^2, T)|\bm{X}_{k-2}\big].
\end{align*}
Using the H\"older inequality and the facts that $\min(a,b)=(a+b)/2-|a-b|/2$ 
and $|a+b|^{1/2} \le |a|^{1/2}+|b|^{1/2}$, it is straightforward to check that 
\begin{align*}
2 \EE \big[\min (\epsilon_{k-1}^2, T)|\bm{X}_{k-2}\big] 
&=\EE\big[ T+\epsilon_{k-1}^2 - |\epsilon_{k-1}^2-T| \big|\bm{X}_{k-2}\big]  \\
&\ge  T+1- \big(\EE[(\epsilon_{k-1}^2-T)^2|\bm{X}_{k-2}]\big)^{1/2} \\
&\ge T+1+2T - \big(\EE[\epsilon_{k-1}^4 |\bm{X}_{k-2}]+T^2\big)^{1/2} \\
&\ge1+2T - \big(\EE[\epsilon_{k-1}^4 |\bm{X}_{k-2}]\big)^{1/2} 
\ge 1,
\end{align*}
as long as $T^2\ge  \EE[\epsilon_k^4|\bm{X}_{k-2}]  /4$, $k\in\mathbb{N}$. 
For example, we can take $T\ge \sqrt{\rho}/2$.
We conclude that (A1) holds for the gain (\ref{eq:gain_arch_1}).

To ensure (A2), we evaluate
\begin{align*}
\EE |G_k - g_k|^2 &= 
\EE \Big[\Big(\frac{\min(X_{k-1}^2, T)}{X_{k-1}^2}\Big)^2 \big(X_k^2-1 -\theta_k X_{k-1}^2\big)^2 \Big] \\
& \le
3\EE \Big[\Big(\frac{\min(X_{k-1}^2, T)}{X_{k-1}^2}\Big)^2 \big(
(2+2\theta_k^2X_{k-1}^4 )\epsilon_k^4 +1+\theta_k^2 X_{k-1}^4\big) \Big] \\
&\le  9 +  3 \EE \Big[\Big(\frac{\min(X_{k-1}^2, T)}{X_{k-1}^2}\Big)^2 
(2C_\Theta \rho +C_\Theta) X_{k-1}^4\Big] \\
& \le 9+3C_\Theta T^2(2\rho+1).
\end{align*}

\subsection{Tracking an AR(d) parameter}
\label{sec:examples:ARd}

In this section we use the notation $X_{k,d}=\big(X_{k}, X_{k-1}, \dots, X_{k-(d-1)}\big)$ 
for the vector of the $d$ consecutive observations ending with $X_k$.

Consider an autoregressive\index{Autoregressive!model} model with $d$ time varying  parameters:
\begin{align}
\label{eq:ar_p_model}
X_k& = \sum_{i=1}^d\theta_{k,i} X_{k-i} + \xi_k = 
\theta_k^T X_{k-1,d} + \xi_k, \quad k\in\mathbb{N},
\end{align}
where $\theta_k=(\theta_{k,1},\dots, \theta_{k,d})$ is  $\mathcal{F}_{k-d-1}$-measurable, 
$\{\xi_k\}_{k\in\mathbb{N}}$ is a martingale difference noise 
with respect to the filtration $\{\mathcal{F}_k\}_{k \in\mathbb{N}_0}$ such that 
$\mathbb{E}|\xi_k|^2 \le C$, $k\in\mathbb{N}$,  starting random vector 
$X_{0,d}$ is given and such that $\mathbb{E} \|X_{0,d}\|^2 \le c$, 
for some $C,c>0$.


For $\vartheta=(\vartheta_1, \ldots, \vartheta_d)$, associate with the AR(d) model its 
polynomial   
\begin{equation}\label{eq:ar_p_polynomial}
t(z, \vartheta) =  1 - \sum_{i=1}^d \vartheta_i z^i, \quad z \in \mathbb{C}.
\end{equation}
It is well know that an AR(d) model with autoregressive parameters $\vartheta$ is stationary 
if, and only if, the (complex) zeros of the polynomial $t(z,\vartheta)$ are outside the unit circle.
This motivates  the definition of the parameter 
sets $\Theta(\rho)$ for some $0<\rho<1$: 
\begin{equation}\label{eq:ar_p_stable_set}
\Theta(\rho)=\big\{\vartheta\in\mathbb{R}^d : \text{for all } |z|<\rho^{-1}, t(z, \vartheta)\neq 0 \big\},
\end{equation}
cf.\ \citep{Moulines:2005}
who also showed that  the following embeddings hold:
\[
B_\infty\big( (\rho^{-2}+\dots+\rho^{-2d})^{-1/2} \big)\subseteq\Theta(\rho)\subseteq	
B_\infty\big( (1+\rho)^d -1 \big),
\]
where $B_\infty(r) = \{ \vartheta\in\mathbb{R}^d: \max_{1\le i\le d}|\vartheta_i| \le r\}$ 
is a uniform ball around zero in $\mathbb{R}^d$ with radius $r>0$.
This gives some feeling about the size of the parameter set $\Theta(\rho)$ and
implies in particular that the set $\Theta(\rho)$ is non-empty and bounded for all $\rho\in(0,1)$.

The AR(d) model \eqref{eq:ar_p_model} can also be described by the following inhomogeneous 
difference equation
\begin{equation}\label{eq:ar_d_system_one_step}
X_{k,d} =\bm{C}(\theta_k)X_{k-1,d} + \bm{I}\bm{e}_1 \xi_k,
\end{equation}
where $\bm{e}_1=(1, 0, \dots, 0)\in\mathbb{R}^d$ and, for any $\vartheta\in\mathbb{R}^d$, 
$\bm{C}(\vartheta)$ is the square matrix of order $d$
\begin{equation}\label{eq:companion_matrix}
\bm{C}(\vartheta) =
\begin{bmatrix}
\vartheta_1	&	\vartheta_2	&	\cdots	&\vartheta_{d-1}	&	\vartheta_d\\
1		&	0		&	\cdots	&	0		&	0\\
0		&	1		&	\cdots	&	0		&	0\\
\vdots	&	\vdots	&	\ddots	&	\vdots	&	\vdots\\
0		&	0		&	\cdots	&	1		&	0
\end{bmatrix}.
\end{equation}
This matrix is usually called the \emph{companion matrix}  to the autoregressive polynomial 
$t(z, \vartheta)$; it is also sometimes called the \emph{state transition matrix}.
One can show that the eigenvalues of $\bm{C}(\vartheta)$ are exactly the reciprocals of the zeros of $t(z,\vartheta)$.
This means that the absolute values of the eigenvalues of $\bm{C}(\vartheta)$ for $\vartheta\in\Theta(\rho)$ 
are all at most $\rho<1$. This in turn implies that for any sequence of vectors 
$\theta_d, \theta_{d+1}, \dots\in\Theta(\rho)$, the pair of sequences
\[
\big((\bm{C}(\theta_d),\bm{C}(\theta_{d+1}), \ldots),(\bm{I}, \bm{I},\ldots)\big)
\]
forms a so called \emph{exponentially stable} pair (cf.~\citep{Antsaklis:2007}).
Among other things, this gives us that so long as the $p$-th moments of both the initial 
$X_{0,d}$ and the noise terms $\xi_k$ are bounded, then the $p$-th moments of 
all $X_k$, $k\in \mathbb{N}$, will be bounded as well (cf.\ Proposition 10 of \citep{Moulines:2005}).

In \citep{Moulines:2005} the model (\ref{eq:ar_p_model}) is considered with nonrandom 
but time varying $\theta_k=\theta(k/n)=(\theta_1(k/n), \ldots, 
\theta_d(k/n))$ for some smooth function $\theta(t) \in \mathbb{R}^d$, $t\in [0,1]$.
One has a triangular array of models and the studied asymptotics  is in sampling
 frequency $n\to \infty$; cf.\ Section \ref{sec:variational_setups:lipschitz}. 
The considered gain function  is an appropriately rescaled version of the gain 
from Remark \ref{gain_autoregression}, namely,
\begin{equation}
\label{gain_moulines}
G_k(\vartheta,\bm{X}_k) = \big(X_k - \vartheta^TX_{k-1,d}\big) \frac{X_{k-1,d}}
{1+\mu \|X_{k-1,d}\|^2},
\end{equation}
for an appropriately chosen $\mu>0$ depending on the observation frequency $n$.
\begin{remark}
One should mind the difference in indexing in our algorithm (\ref{eq:algorithm_main}) and 
algorithm (3) from \citep{Moulines:2005}. This is not an issue since we can  
make the correspondence between the algorithms exact by treating $\hat{\theta}_{k+1}$ as an estimate
of $\theta_k$ rather than of $\theta_{k+1}$, the error can be absorbed into the third term of 
the right hand side of (\ref{eq:boundE}).
\end{remark}
Although assumption (A2) is trivially satisfied, our general Theorem \ref{theo:bound}
cannot be applied for $d\ge 2$ because assumption (A1) does not hold.
Indeed,  
\[
g_k(\hat{\theta}_k, \theta_k| \bm{X}_{k-1})=-\frac{X_{k-1,d}X_{k-1,d}^T}
{1+\mu \|X_{k-1,d}\|^2}(\hat{\theta}_k-\theta_k) =-\bm{M}_k(\hat{\theta}_k-\theta_k),
\]
and the matrix $\bm{M}_k$ is of the form $\alpha xy^T$ for some $\alpha>0$ and 
column vectors $x,y \in \mathbb{R}^d$.
But the matrix $xy^T$ has $d-1$ zero eigenvalues and one eigenvalue $y^Tx$, so that 
always $\Lambda_{(1)}(\bm{M}_k)=0$ and thus (\ref{eq:Lambda}) does not hold.
\begin{remark}
\label{rem_mouilines}
On the other hand, the authors of \citep{Moulines:2005} do manage to establish a convergence results 
for the gain function (\ref{gain_moulines}). It is instructive to understand where the difference in the two approaches is.
Careful inspection of the proofs in \citep{Moulines:2005} reveals that the analogue 
of the lower bound (\ref{eq:Lambda}), the persistence of excitation condition, is established in
Lemma 17  (p.\ 2627 of \citep{Moulines:2005}).  
The basic difference is that the quantity to bound from below in our case is
the conditional expectation of the smallest eigenvalue of 
the matrix $\bm{M}_k$, whereas in \citep{Moulines:2005}) it is the smallest eigenvalue 
of the conditional expectation of the  matrix $\bm{M}_k$. 
For the gain function (\ref{gain_moulines}),
the lower bound for the former is zero (as is demonstrated above)
and it is positive for the latter (cf.\ Lemma 17 of \citep{Moulines:2005}). 
A way to fix this would be to establish a version of the general theorem,
where (\ref{different_bound}) is assumed instead of (\ref{eq:Lambda}),
see also  Remark \ref{rem_version_theorem}. We do not consider this here.
\end{remark}

Consider the case $d=1$ and gain (\ref{gain_moulines}) for which Theorem \ref{theo:bound} can be applied.
Assume that  $\mathbb{E}X_0^2$ is bounded, 
$\mathbb{E}(\xi_k|\mathcal{F}_{k-1})=0$
 and $\mathbb{E}(\xi_k^2|\mathcal{F}_{k-1})=\sigma^2>0$, $k\in\mathbb{N}$. 
We have 
\begin{align*}
G_k(\hat{\theta}_k,\bm{X_k})&=\frac{X_{k-1} (X_k-\hat{\theta}_k X_{k-1})}{1+\mu X_{k-1}^2}, \quad
g_k(\hat{\theta}_k, \theta_k | \bm{X}_{k-1})  
=-M_k(\hat{\theta}_k - \theta_k),
\end{align*}
where $M_k=\frac{X_{k-1}^2}{1+\mu X_{k-1}^2} \le \frac{1}{\mu}$. Besides, 
 if $|X_{k-1}|^2 \ge c$, then $M_k \ge \frac{1}{c^{-1} +\mu}$, and  if $|X_{k-1}|^2 < c$, then 
\[
\mathbb{E}(M_k|\mathcal{F}_{k-2}) \ge \frac{ \mathbb{E}(|X_{k-1}|^2|\mathcal{F}_{k-2})}{1+\mu c}
=\frac{X_{k-2}^2\theta_{k-2}^2 + \sigma^2}{1+\mu c}
\ge
\frac{\sigma^2}{1+\mu c}.
\] 
Condition (A1) is fulfilled. As to condition (A2), 
\begin{align*}
\mathbb{E} |G_k - g_k |^2 &=
\mathbb{E}\Big[\frac{X_{k-1}^2 \xi_k^2}{(1+\mu X_{k-1}^2)^2}\Big]
\le  \sigma^2 \max_{u>0} \Big\{\frac{u}{(1+\mu u)^2}\Big\} = \frac{\sigma^2}{4\mu}.
\end{align*}
Both (A1) and (A2) are thus satisfied. Interestingly, there is no issue of stability in this case:
we do not have to assume that $|\theta_k| \le \rho <1$, $k\in \mathbb{N}_0$, almost surely.
Just almost sure boundedness $|\theta_k|^2 \le C_\Theta$, $k\in \mathbb{N}_0$, 
for a constant $C_\Theta$, is sufficient.

Consider now another gain function for the case $d=1$. This time we assume that   
$\mathbb{E} [\xi_k|\bm{X}_{k-1}]=\mathbb{E} [\xi_k^3|\bm{X}_{k-1}]=0$, 
$\mathbb{E}[\xi_k^2|\bm{X}_{k-1}]=\sigma^2>0$, and $\mathbb{E}[\xi_k^4|\bm{X}_{k-1}]
=c \sigma^4$, $k\in\mathbb{N}$, for some constant $0<c<5$. The proposed gain and the corresponding average 
gain are as follows:
\begin{align}
G_k(\vartheta, \bm{X}_k)	&=
\frac{\min\{X_{k-1}^2,T\}}{X_{k-1}^2}
(X_k X_{k-1}- \vartheta X_{k-1}^2),\notag\\
\label{eq:gain_ar_1b}
g_k(\hat{\theta}_k, \theta_k |\bm{X}_{k-1})&=
- \min\{X_{k-1}^2,T\} (\hat{\theta}_k - \theta_k) = -M_k(\hat{\theta}_k - \theta_k),
\end{align}
with some $T \ge  (9-c)\sigma^2/4$.
Note that this is a rescaled gain function of type $\bar{G}$ from Section~\ref{sec:gains}.
Clearly, $M_k \le T$ and, according to Lemma~\ref{lemma:truncated_conditional}, 
\begin{equation}
\label{M_bound_1}
\mathbb{E}[M_k|\bm{X}_{k-2}] =
\mathbb{E}\big[\min\{X_{k-1}^2,T\}|\bm{X}_{k-2}\big] \ge \frac{(5-c)\sigma^2}{4},
\end{equation}
so that (A1) holds.
Assumption (A2) also holds since
\begin{align*}
\mathbb{E}\big|G_k(\hat{\theta}_k, \bm{X}_k) - g_k(\hat{\theta}_k, \theta_k | \bm{X}_{k-1})\big|^2
&=
\mathbb{E}\Big[\frac{\min\{X_{k-1}^2,T\}^2 \xi^2_k}{X_{k-1}^2}\Big] 
\le \max\{T^2,1\} \sigma^2.
\end{align*}



Finally consider a version of general AR(d) model. We will only outline the main steps, leaving out the details.
Assume that the noise terms $\xi_k$ in (\ref{eq:ar_p_model}) form a Gaussian white noise 
sequence with mean zero and variance $\sigma^2>0$ and that the  parameter process 
$\{\theta_i\}_{ i\in\mathbb{N}}$ is constant within the batch of $d$ consecutive observations.
For a $(2d-1)$-dimensional vector $m= (m_{-(d-1)}, \dots, m_{-1}, m_0, m_1, \dots, m_{d-1})$, 
introduce the Toeplitz matrix $\bm{T}(m) =(m_{ij})$ associated with that vector whose entries are  
$m_{ij}=m_{i-j}$, $i,j=1,\ldots d$, 
so that this matrix has constant (from left to right) diagonals.
Thus, $m$ is the column vector formed by starting at the top right element of $\bm{T}(m)$, 
going backwards along the top row of $\bm{T}(m)$ and then down the left column of $\bm{T}(m)$.
Denote $\vartheta=(\vartheta_1,\ldots,\vartheta_d)$ and introduce 
$\bm{A}(\vartheta)=\bm{T}(a(\vartheta))$ and $\bm{B}(\vartheta)=\bm{T}(b(\vartheta))$,
the Toeplitz matrices created from the vectors 
$a(\vartheta) = (-\vartheta_{d-1}, \dots, -\vartheta_1, 1, 0, \dots, 0)$ and 
$b(\vartheta)=(0, \dots, 0, \vartheta_d, \vartheta_{d-1}, \dots, \vartheta_1)$ respectively.
Under the imposed assumptions, we can rewrite the model 
(\ref{eq:ar_p_model}) as follows:
\begin{equation}
\label{eq:ar_d_system}
\bm{A}(\theta_k)X_{k,d} = \bm{B}(\theta_k)X_{k-d,d}+\xi_{k,d}.
\end{equation}
The matrix $\bm{A}(\theta_k)$ is upper triangular with a diagonal consisting of ones, whence invertible.
From this point on, we regard vector $X_{dk,d}$, $k\in\mathbb{N}$, as an observation 
at time moment $k$  so that we can specify our observation model in terms of conditional 
distribution of $X_{k,d}$ given $\bm{X}_{k-d}$:
\begin{equation}
\label{eq:ar_d_kernel}
X_{k,d}|\bm{X}_{k-d} \sim N\big(\bm{A}^{-1}(\theta_k)\bm{B}(\theta_k)X_{k-d,d},
\sigma^2\bm{A}^{-1}(\theta_k)(\bm{A}^{-1}(\theta_k))^T\big),
\end{equation}
where $\{\theta_k\}_{k\in\mathbb{N}}$ is a predictable process 
with respect to the filtration $\{\mathcal{F}_{kd}\}_{ k\in\mathbb{N}_0}$.
Notice that the observation process is of a Markov structure.
\begin{remark}
Even if the normality of the noise is assumed in the model (\ref{eq:ar_p_model}),
the models  (\ref{eq:ar_p_model}) and (\ref{eq:ar_d_kernel}) still differ
since in general the parameter process $\{\theta_k\}_{k\in\mathbb{N}}$ varies also 
within the batches of $d$ observations in the model (\ref{eq:ar_p_model}). 
However, this is not an issue. Indeed, even though 
the parameter is allowed to vary within each batch of $d$ observation, we still can 
use the gain function (which we derive below) as if the parameter process is constant within 
the batches and establish an upper bound of type (\ref{theo:bound}) 
for the quality of such a procedure.
The error that is made by pretending that the parameter is constant within the batches can 
be absorbed into the third term of the right hand side of (\ref{theo:bound}).
\end{remark}

In this case we propose a gain of the type (\ref{eq:gain_canonical_1}):
\begin{equation}
\label{eq:gain_ar_d}
G_{dk}(\vartheta,\bm{X}_{dk})=\nabla_{\vartheta}\log p_{\vartheta}\big(X_{k,d}|\bm{X}_{d(k-1)}\big),
\end{equation}
where $p_{\vartheta}(\cdot|\bm{X}_{d(k-1)})=p_{\vartheta}(\cdot|X_{d(k-1),d})$ is the conditional density of (\ref{eq:ar_d_kernel}).
Thus, the tracking sequence is updated with batches of $d$ observations from the autoregressive process.
Below, to ease the notation, we will often write $X$ and $Y$ instead of $X_{dk,d}$ 
and $X_{d(k-1),d}$, respectively.
As explained in Section~\ref{sec:gains}, the corresponding average gain $g_{dk}$ can be found as minus the gradient 
of the Kullback-Leibler divergence between the two conditional distributions  
with two different parameters.
This observation is particularly useful if we are able to write this Kullback-Leibler divergence as an appropriate quadratic form.
The Kullback-Leibler divergence between two $d$-dimensional multivariate normal distributions 
$\mathbb{P}_0=N(\mu_0, \bm{\Sigma}_0)$ and $\mathbb{P}_1=N(\mu_1,\bm{\Sigma}_1)$ is given by 
\begin{equation}
\label{eq:KL_normals}
K(\mathbb{P}_0, \mathbb{P}_1) = 
\frac{1}{2}\Big(\log \frac{\det(\bm{\Sigma}_1)}{\det(\bm{\Sigma}_0)} + \tr(\bm{\Sigma}_1^{-1}\bm{\Sigma}_0) - d 
+(\mu_1-\mu_0)^T\bm{\Sigma}_1^{-1}(\mu_1-\mu_0) \Big).
\end{equation}

Let $\theta, \vartheta \in \mathbb{R}^d$, i.e., 
$\theta=(\theta_1,\ldots, \theta_d)$ (not to be confused with the
vectors $\theta_k$, $k\in\mathbb{N}_0$) and $\vartheta=(\vartheta_1,\ldots, \vartheta_d)$. 
According to (\ref{eq:ar_d_kernel}), 
$\mu(\theta,Y)=\bm{A}^{-1}(\theta)\bm{B}(\theta)Y$ and $\bm{\Sigma}(\theta) 
=\sigma^2\bm{A}^{-1}(\theta)\bm{A}^{-T}(\theta)$.
Now we compute $K\big(N(\mu(\theta,Y),\bm{\Sigma}(\theta)), N(\mu(\vartheta,Y),\bm{\Sigma}(\vartheta))\big)$.
Let $\bm{S}=\bm{T}(s)$ be the Toeplitz matrix associated with the vector 
$s=(0, \dots, 0,1,0, \dots, 0)\in\mathbb{R}^{2d-1}$ where $1$  is in the $(d-1)$-th position. 
Matrix $\bm{S}$ has ones above the main diagonal and zeros elsewhere  and it is
sometimes called \emph{upper shift matrix}.
For $i=2, \dots,d-1$,  the powers $\bm{S}^i$ are the Toeplitz matrices associated with the vectors 
$(0, \dots, 0,1,0, \dots, 0)\in\mathbb{R}^{2d-1}$ where $1$ occupies the $(d-i)$-th position,
$\bm{S}^d=\bm{O}$, the zero matrix of order $d$, and $\bm{S}^0$ should be read as $\bm{I}$, 
the identity matrix of order $d$. 
It follows that $\bm{A}(\vartheta) = \bm{I}-\bm{S}\vartheta_1 -\ldots - \bm{S}^d \vartheta_d$,
so that
\[
\bm{A}(\vartheta) -\bm{A}(\theta) =\bm{S} (\theta_1-\vartheta_1) +\bm{S}^2(\theta_2-\vartheta_2) 
+ \ldots +\bm{S}^d (\theta_d-\vartheta_d),
\]
and
\[
\bm{A}(\vartheta)\bm{A}^{-1}(\theta) =\bm{I} + \bm{S} \bm{A}^{-1}(\theta) (\theta_1-\vartheta_1) 
+ \bm{S}^2 \bm{A}^{-1}(\theta) (\theta_2-\vartheta_2) + \ldots + \bm{S}^d \bm{A}^{-1}(\theta) (\theta_d-\vartheta_d).
\]

For all $\vartheta\in\mathbb{R}^d$, the matrices $ \bm{A}(\vartheta)$ have all eigenvalues 
equal to one (so do their inverses), hence $\det(\bm{\Sigma}(\vartheta))=\sigma^{2d}$ and
we conclude that the logarithm in (\ref{eq:KL_normals}) is zero.
Also, using basic properties properties of the trace and the representation for 
$ \bm{A}(\theta) \bm{A}^{-1}(\vartheta)$ derived above,
\begin{align*}
&\tr\big[\bm{\Sigma}^{-1}(\vartheta) \bm{\Sigma}(\theta)\big]- d 
= \tr\Big[\big(\bm{A}^{-1}(\vartheta)\bm{A}^{-T}(\vartheta)\big)^{-1}\big(\bm{A}^{-1}(\theta)\bm{A}^{-T}(\theta)\big) \Big]- d\\
&= \tr\big[\bm{A}^T(\vartheta)\bm{A}(\vartheta)\bm{A}^{-1}(\theta)\bm{A}^{-T}(\theta)\big]- d 
= \tr\big[\big(\bm{A}(\vartheta)\bm{A}^{-1}(\theta)\big)^T\bm{A}(\vartheta)\bm{A}^{-1}(\theta)\big]-d\\
&= 2\sum_{i=1}^d \tr\big[\bm{S}^i\bm{A}^{-1}(\theta)\big](\theta_i-\vartheta_i)+ \sum_{i=1}^d\sum_{j=1}^d 
\tr\big[\bm{A}^{-T}(\theta)(\bm{S}^i)^T\bm{S}^j\bm{A}^{-1}(\theta) \big] (\theta_i-\vartheta_i)(\theta_j-\vartheta_j).
\end{align*}
Since the inverse of an upper-triangular matrix is upper-triangular, 
$\tr\big[\bm{S}^i\bm{A}^{-1}(\theta)\big]=0$, for all $i=1,\dots,d$ and $\vartheta \in \mathbb{R}^d$.
For any $(n\times m)$-matrix $\bm{M}$, denote by $\vect(\bm{M})$ the column vector containing the $n m$ 
entries of $\bm{M}$ in any (fixed) order. Let $v_i(\theta)=
\vect\big(\bm{S}^i\bm{A}^{-1}(\theta)\big)$, $i=1, \dots,d$, and note that 
$v_d(\theta)$ is always a zero vector.
Note that $\tr\big[\bm{A}^{-T}(\theta)(\bm{S}^i)^T\bm{S}^j\bm{A}^{-1}(\theta) \big]
=v_i^T(\theta)v_j(\theta)$, $i,j=1, \ldots, d$.
We conclude that the previous display can be written as
\[
\tr\big[\bm{\Sigma}^{-1}(\vartheta) \bm{\Sigma}(\theta)\big]- d =
(\vartheta-\theta)^T\big[v_1(\theta) v_2(\theta) \ldots v_d(\theta)\big]^T
\big[v_1(\theta) v_2(\theta) \ldots v_d(\theta)\big] (\vartheta-\theta),
\]
where the matrices on the right are comprised by columns.

Consider now the quadratic form in the Kullback-Leibler 
divergence (\ref{eq:KL_normals}). 
For any $\theta, \vartheta,Y\in\mathbb{R}^d$, 
\begin{align*}
&\big(\mu(\vartheta,Y)-\mu(\theta,Y)\big)^T \bm{\Sigma}^{-1}(\vartheta) \big(\mu(\vartheta,Y)-\mu(\theta,Y)\big) = \\
& \quad\quad =\sigma^{-2} Y^T \big(\bm{B}(\vartheta)-\bm{A}(\vartheta)\bm{A}^{-1}(\theta)\bm{B}(\theta)\big)^T
\big(\bm{B}(\vartheta)-\bm{A}(\vartheta)\bm{A}^{-1}(\theta)\bm{B}(\theta)\big)Y.
\end{align*}
Since $\bm{B}(\vartheta)=(\bm{S}^{d-1})^T \vartheta_1+\ldots +
\bm{S}^T \vartheta_{d-1}+\bm{I} \vartheta_d$, we have
\[
\bm{B}(\vartheta) = \bm{B}(\theta)+\big(\bm{S}^{d-1}\big)^T(\vartheta_1-\theta_1) + \ldots 
+\bm{S}^T(\vartheta_{d-1}-\theta_{d-1}) + \bm{I}(\vartheta_d-\theta_d),
\]
which, together with the representation for $\bm{A}(\vartheta)\bm{A}^{-1}(\theta)$ derived above, imply
\begin{align*}
&\bm{B}(\vartheta)-\bm{A}(\vartheta)\bm{A}^{-1}(\theta)\bm{B}(\theta)=\bm{C}_1(\theta) (\vartheta_1-\theta_1) 
+\bm{C}_2(\theta) (\vartheta_2-\theta_2) + \ldots + \bm{C}_d(\theta) (\vartheta_d-\theta_d),
\end{align*}
where $\bm{C}_i(\theta) = \big(\bm{S}^{d-i}\big)^T+\bm{S}^i\bm{A}^{-1}(\theta)\bm{B}(\theta)$, 
$i=1,\ldots, d$; notice also that $\bm{C}_d(\theta) =\bm{I}$.  Then
\[
\big(\bm{B}(\vartheta)-\bm{A}(\vartheta)\bm{A}^{-1}(\theta)\bm{B}(\theta)\big)Y= 
\big[\bm{C}_1(\theta)Y \ldots \bm{C}_d(\theta)Y \big](\vartheta-\theta).
\]

Summarizing, we obtained that
\[
K\big(N(\mu(\theta,Y),\bm{\Sigma}(\theta)), N(\mu(\vartheta,Y),\bm{\Sigma}(\vartheta))\big) = 
\frac{1}{2} (\vartheta-\theta)^T \bm{M}(\vartheta-\theta),
\]
with 
\begin{align}
\bm{M}=\bm{M}(\theta,Y)&= \big[v_1(\theta) v_2(\theta) \ldots v_d(\theta)\big]^T
\big[v_1(\theta) v_2(\theta) \ldots v_d(\theta)\big] \notag\\
\label{matrix_M}
&\quad + \sigma^{-2}
\big[\bm{C}_1(\theta)Y \ldots \bm{C}_d(\theta)Y \big]^T
\big[\bm{C}_1(\theta)Y \ldots \bm{C}_d(\theta)Y\big].
\end{align}

According to (\ref{KL-gain}), we can derive the expression for the average gain:
\begin{align*}
g_{dk}\big(\vartheta, \theta|\bm{X}_{d(k-1)}\big)=-\bm{M}(\theta,X_{d(k-1),d}) (\vartheta-\theta).
\end{align*}
where $\bm{M}$ is given by (\ref{matrix_M}).
Note that the matrix  $\bm{M}$ does not depend on $\vartheta$ and is clearly positive semidefinite.
We evaluate now its eigenvalues.
In the representation (\ref{matrix_M}), the first matrix in the sum 
is positive semi-definite but has at least one zero eigenvalue.
It is also clear that the entries of this matrix are polynomials of the coordinates of $\theta$, so that, if 
$\theta \in \Theta$ for a bounded set $\Theta$, then the largest eigenvalue of this matrix is upper bounded, 
uniformly over $\Theta$, by some constant, say, $K_1$.
As to the second (also positive semidefinite) matrix in the sum of matrices  from (\ref{matrix_M}), note that
\begin{align*}
\tr\big(&\begin{bmatrix}\bm{C}_1(\theta)Y& \cdots& \bm{C}_d(\theta)Y\end{bmatrix}^T
\begin{bmatrix}\bm{C}_1(\theta)Y&\cdots& \bm{C}_d(\theta)Y\end{bmatrix} \big)\\
\quad &=Y^T\bm{C}_1^T(\theta)\bm{C}_1(\theta)Y+ \ldots + Y^T\bm{C}_d^T(\theta)\bm{C}_d(\theta)Y.
\end{align*}
The entries of the matrices $\bm{C}_i^T(\theta)\bm{C}_i(\theta)$, $i=1,\dots,d$, 
are polynomials in $\theta_1, \dots, \theta_d$ which are bounded uniformly over 
a bounded set $\Theta$.  Recall also that the trace of a matrix is equal to 
the sum of its eigenvalues. 
We conclude that  $\Lambda_{(d)}\big(\bm{M}(\theta, Y)\big) \le K_1 + K_2 \|Y\|^2$ uniformly in 
$\theta\in\Theta$ for any bounded $\Theta \subset \mathbb{R}^d$.

To derive a lower bound on the smallest eigenvalue of the matrix $\bm{M}(\theta, Y)$,
note that this matrix can be rewritten in the form
\[
\left[
\begin{array}{ccc|c}
v_{1,1}(\theta)	&	\cdots	&	v_{1,d-1}(\theta)	&	0\\
\vdots			&	\ddots	&	\vdots			&	\vdots\\
v_{d-1,1}(\theta)	&	\cdots	&	v_{d-1,d-1}(\theta)	&	0\\
\hline
0			&	\cdots	&	0	& Y^TY
\end{array}
\right]
+
\left[
\begin{array}{ccc|c}
c_{1,1}(\theta)	&	\cdots	&	c_{1,d-1}(\theta)	&	c_{1,d}(\theta)\\
\vdots			&	\ddots	&	\vdots			&	\vdots\\
c_{d-1,1}(\theta)	&	\cdots	&	c_{d-1,d-1}(\theta)	&	c_{d-1,d}(\theta)\\
\hline
c_{d,1}(\theta)	&	\cdots	&	c_{d,d-1}(\theta)
&	0
\end{array}
\right]
\]
for $v_{i,j}(\theta) =v_i^T(\theta)v_j(\theta)$ and 
$c_{i,j}(\theta) = \sigma^{-2}Y^T\bm{C}_i^T(\theta)\bm{C}_j(\theta)Y$, 
where we swapped the $(d,d)$-th entries of the matrices in the sum from (\ref{matrix_M}).
We used also that $v_d(\theta)=0$ and $\bm{C}_d(\theta)=\bm{I}$.

Note that the top left matrices in the block matrices above are Gram matrices and therefore 
positive semidefinite. 
The matrix $[v_{i,j}(\theta)]_{i,j=1,\dots,d-1}$ is the Gram matrix associated with 
the vectors $v_1(\theta), \dots, v_{d-1}(\theta)$. Since $\bm{A}^{-1}(\theta)$ 
is a triangular matrix with $1$'s in its main diagonal, it follows that these vectors 
are linearly independent. Hence the associated Gramian is actually positive definite for each $\theta$.
The determinant of this Gramian is a polynomial in the entries of the matrix which in turn are polynomials 
in $\theta_1, \dots, \theta_d$. If $\theta$ lies in a compact set $\Theta$,   
the infimum of the determinant of this matrix over $\theta\in\Theta$ must be lower bounded by some positive constant, say, $K_3$.
Using the same reasoning, we conclude that its determinant is upper bounded by some constant $K_4$.
A lower bound on the smallest eigenvalue can then be obtained by noting that for any positive definite matrix $\bm{M}$ 
of order $d$,
\[
\lambda_{(1)}(\bm{M})\ge\frac{\det(\bm{M})}{\lambda_{(d)}^{d-1}(\bm{M})}\ge\frac{K_3}{K_4^{d-1}} =K_5>0.
\]

We conclude that the smallest eigenvalue of the block matrix on the left is at least $\min(K_5,\|Y\|^2)$.
The block matrix on the right is clearly positive semidefinite. We conclude that the smallest eigenvalue of the matrix 
$\Lambda_{(1)}\big(\bm{M}(\theta, Y)\big) \ge\min\{K_5, \|Y\|^2\}$ 
by using Weyl's Monotonicity Theorem, see for example \citep{Bathia:1997}.
This means that $\Lambda_{(1)}\big(\bm{M}(\theta, Y)\big) \ge \|Y\|^2$ for all $\|Y\| \le K_5$. 
Swapping back the $(d,d)$-th entries of the matrices in the sum from (\ref{matrix_M}),
we see that there must exist an $Y$ such that $\|Y\| \le K_5$ and for which 
the smallest eigenvalue of the second matrix in the sum from (\ref{matrix_M}) is bounded from below by 
$c_1\|Y\|^2$ for some $c_1>0$.
By using renormalization arguments, we conclude that 
$\Lambda_{(1)}\big(\bm{M}(\theta, Y)\big) \ge c_2\|Y\|^2$ for some $c_2>0$.

Condition (A2) is not difficult to check.
The gain (\ref{eq:gain_ar_d}) can be written in the following form
\begin{align*}
G_{dk}(\vartheta,\bm{X}_{dk})
&=-\sigma^{-2}(\bm{A}(\theta)X_{dk,d}-\bm{B}(\theta)X_{d(k-1),d})^T
\frac{\bm{\partial}\big(\bm{A}(\theta)X_{dk,d}-\bm{B}(\theta)X_{d(k-1),d}\big)}{\bm{\partial}\theta},
\end{align*}
where $\bm{\partial}/\bm{\partial}\theta$ represents the Jacobian\index{Jacobian matrix} operator.
To verify (A2), it suffices to check that the expectation of the norm of $G_{dk}$ is bounded.
We omit the details but it is clear from the expression derived above that the norm of the gain 
function squared is a polynomial of degree four in the coordinates of $X_{dk,2d}$.
We have already mentioned that if the initial values for the autoregressive process and 
the noise terms have uniformly bounded second moments, then this transfers to the each 
observation $X_k$, provided that the sequence of parameters of the model, $\theta_k$, 
lives in the parameter set $\Theta(\rho)$ for some $\rho<1$.

For the matrix  $\bm{M}$ defined by (\ref{matrix_M}), 
we established above that almost surely
\[
c\|Y\|^2 \le \Lambda_{(1)}\big(\bm{M}(\theta, Y)\big) \le \Lambda_{(d)}\big(\bm{M}(\theta, Y)\big)  \le 
K_1 + K_2 \|Y\|^2
\]
uniformly in $\theta\in\Theta$ for any bounded $\Theta \subset \mathbb{R}^d$.
We can get rid of the dependence on $\|Y\|^2$ (recall that $Y=X_{d(k-1),d}$) 
by using the rescaled gain $\bar{G}_{dk}$ 
defined in Section~\ref{sec:gains}:
\[
\bar{G}_{dk}(\vartheta,\bm{X}_{dk}) = \frac{\min\{ \kappa, \|X_{d(k-1),d}\|^2\}}{\|X_{d(k-1),d}\|^2} 
G_{dk}(\vartheta,\bm{X}_{dk}), \quad \kappa>0.
\]
Then 
$\bar{g}_{dk}\big(\vartheta, \theta|\bm{X}_{d(k-1)}\big)=
- \bar{\bm{M}}(\theta,X_{d(k-1),d}) (\vartheta-\theta)$, 
where 
\[
\bar{\bm{M}}(\theta,X_{d(k-1),d})=\frac{\min\{ \kappa, \|X_{d(k-1),d}\|^2\}}{\|X_{d(k-1),d}\|^2}\bm{M}(\theta,X_{d(k-1),d})
\]
and $\bm{M}$ is defined by (\ref{matrix_M}).
The argument in \eqref{eq:scaled_gain} shows that (A2) still holds for this rescaled gain.
Next, $\Lambda_{(d)}\big(\bar{\bm{M}}(\theta,X_{d(k-1),d})\big) \le C$ almost surely by construction.
Thus to establish (A1), we need to verify that 
$\mathbb{E}\big[\Lambda_{(1)}\big(\bar{\bm{M}}(\theta,X_{d(k-1),d})\big) \big| \bm{X}_{d(k-2)}\big] \ge c$.
One can then proceed as in (\ref{M_bound_1}) (and  Lemma~\ref{lemma:truncated_conditional}) to show that 
for an appropriately large $\kappa$, $\mathbb{E}\big[\min\{\kappa, \|X_{d(k-1),d}\|^2\}| \bm{X}_{d(k-2)}\big] >c$, 
we omit this derivation. 
\begin{remark}
One could drop the requirement for the errors to be Gaussian and still use the same gain 
$G_{dk}$. We expect the same results to hold, under appropriate moment assumptions.
Instead of using the Kullback-Leibler representation \eqref{eq:KL_normals}, one has to 
work with quantities $G_{dk}$  and $g_{dk}$ directly and assure the validity of (A1) and (A2) 
based on moment assumptions on the error terms (and possibly the initial conditions) 
as we did in the one dimensional case.
\end{remark}

\section{Proofs of the lemmas}
\label{sec:proofs}

\begin{proof}[Proof of Lemma \ref{lemma1}]
First suppose that $y=\bm{M}x$ for some symmetric positive definite matrix 
$\bm{M}$ such that  $0<\lambda_1 \le \lambda_{(1)}(\bm{M}) \le \lambda_{(d)}(\bm{M}) 
\le \lambda_2 <\infty$. 
Then $\left\langle x,y\right\rangle=x^T\bm{M}x$ and therefore 
\[
0<\lambda_1\|x\|^2 \le \lambda_{(1)}(\bm{M})\|x\|^2  
\le \left\langle x,y\right\rangle \le  \lambda_{(d)}(\bm{M})\|x\|^2  \le \lambda_2\|x\|^2
\]
and
\[
\|y\|^2 = \left\langle y,y\right\rangle = x^T\bm{M}^T\bm{M}x = x^T\bm{M}^2x \le \lambda_2^2 \|x\|^2.
\]

Now we prove the converse assertion. 
Suppose $x,y \in \mathbb{R}^d$ and $0<  \lambda_1' \|x\|^2 \le \left\langle x,y\right\rangle \le \lambda_2' \|x\|^2 < \infty$
for some $\lambda_1',\lambda_2' \in \mathbb{R}$ such that $0<\lambda_1' \le \lambda_2' <\infty$ and that $\|y\|\le C\|x\|$.
Let $V=\{v=a x + by: \, a,b \in \mathbb{R}\}$ be the linear space spanned by $x$ and $y$.
First consider the case $\mbox{dim}(V) =1$, i.e., $y=\alpha x$ for some $\alpha \in \mathbb{R}$. 
Then $\left\langle y,x\right\rangle=\alpha \|x\|^2$ so that $0<\lambda_1'\le \alpha \le \lambda_2'<\infty$.
Thus $y= \alpha x  = \bm{M} x$ with symmetric and positive $\bm{M}=\alpha I$ 
so that $0<\lambda_1'\le \alpha=\lambda_{(1)}(\bm{M})=\lambda_{(d)}(\bm{M})\le\lambda_2'<\infty$.

Now consider the case $\mbox{dim}(V) =2$.
Let $e_1=x/\|x\|$ and $\{e_1,e_2\}$ be an orthonormal basis of $V$.
Then 
\begin{align*}
x&=\|x\| e_1 \\
y &= \alpha e_1 +\beta e_2.
\end{align*}
The conditions $\lambda_1'\|x\|^2 \le \left\langle x,y\right\rangle =\alpha \|x\| \le \lambda_2' \|x\|^2$ 
and $\|y\|=\sqrt{\alpha^2+\beta^2} \le C \|x\|$ imply that
\[
\lambda_1'\|x\|\le \alpha\le \min\{\lambda_2',C\} \|x\|, \quad |\beta| \le C \|x\|.
\]

Let $e_2$ be chosen in such a way that  $\beta>0$ (which is always possible.)
Now, we change the basis of $V$ as follows:
\begin{align*}
e'_1 &= \cos(\theta) e_1 - \sin(\theta) e_2, \\ 
e'_2 &=\sin(\theta) e_1 + \cos(\theta) e_2. 
\end{align*}
We thus rotate the basis $\{e_1,e_2\}$ by the angle $\theta$.
In these new basis we have 
\begin{align*}
x& = \|x\|\cos(\theta)e'_1 + \|x\|\sin(\theta) e'_2=\alpha_x e'_1+ \beta_x e'_2, \\ 
y& = (\alpha\cos(\theta)-\beta\sin(\theta))e'_1 +(\alpha\sin(\theta)+\beta\cos(\theta))e'_2 =\alpha_y e'_1 + \beta_y e'_2. 
\end{align*}


Recall that $\alpha,\beta>0$.
Take $\theta \in (0,\pi/2)$ such that $\alpha\cos(\theta)-\beta\sin(\theta)=\frac{1}{2} \alpha\cos(\theta)$,
i.e.,\ $\tan(\theta)=\frac{\alpha}{2\beta}$.
Then we have that 
\[
\frac{\lambda_1'}{2}\le \frac{\alpha}{2\|x\|}= \frac{\alpha_y}{\alpha_x} 
\le\frac{\min\{\lambda_2',C\}}{2}, 
\;\; 
\lambda_1'\le \frac{\alpha}{\|x\|}\le \frac{\beta_y}{\beta_x} \le \frac{\alpha}{\|x\|} + 
\frac{2\beta^2}{\alpha\|x\|}\le \min\{\lambda_2',C\}+\frac{2C^2}{\lambda_1'}.
\]
Take then $\lambda_1=\lambda_1'/2$ and $\lambda_2 = \min\{\lambda_2',C\}+2C^2/\lambda_1'$.

Let $\{e'_3,\ldots, e'_d\}$ be the orthonormal basis of $V^\bot$, so that $\bm{b}=\{e'_1,e'_2,e'_3, \ldots ,e'_d\}$
is an orthonormal basis of $\mathbb{R}^d$. 
Take
\[
\tilde{\bm{M}}=\left[\begin{array}{c|c}
\bm{D} & \bm{O}\\
\hline
\bm{O} & \bm{I}_{d-2}
\end{array}\right]
\quad \hbox{ with }\quad
\bm{D}=\left[\begin{array}{cc}
\alpha_y/\alpha_x & 0\\
0 & \beta_y/\beta_x
\end{array}\right]
\]
where the $\bm{O}$'s indicate null matrices of the appropriate dimensions.
We then have $y=\tilde{\bm{M}} x$ in the basis $\bm{b}$ and 
$\lambda_1 \le \lambda_{(1)}(\tilde{\bm{M}}) \le \lambda_{(d)}(\tilde{\bm{M}}) \le \lambda_2$.
We can finally obtain $\bm{M}$ by using the orthogonal matrix 
$\bm{T}$ to change the basis $b$ to the canonical basis of $\mathbb{R}^d$ as 
$\bm{M}=\bm{T}^{-1}\tilde{\bm{M}}\bm{T}=\bm{T}^T\tilde{\bm{M}}\bm{T}$. 
Clearly, $\bm{M}$ has the same eigenvalues 
as $\tilde{\bm{M}}$ and is symmetric.
\end{proof}

\begin{proof}[Proof of Lemma \ref{lemma_bound}]
For the sake of  brevity, we use the notations $\theta_k=\theta_k(\bm{X}_{k-1})$, 
$G_k=G(\hat{\theta}_k, X_k|\bm{X}_{k-1})$ and 
$g_k = g(\hat{\theta}_k,\theta_k|\bm{X}_{k-1})$, $k\in\mathbb{N}_0$.

Recall that $\Theta$ is compact and $\sup_{\theta \in \Theta} \|\theta\|_2^2 \le C_\Theta$.
By iterating (\ref{eq:algorithm_main}), 
it is easy to see that $\EE \|\hat{\theta}_k\|^2 <\infty$ for each $k \in \mathbb{N}_0$.
First assume $\EE \|\hat{\theta}_k\|^2 \le K C_\Theta$, for some $K>0$ to be chosen later. 
By  (\ref{bound_G_k}), we obtain 
$\EE\|G_k\|^2 \le 2 C_g + 4 \lambda_2^2 C_\Theta  + 4 \lambda_2^2K C_\Theta=\bar{C}_g$, 
which implies, in view of (\ref{eq:algorithm_main})  and the fact that $\gamma_k\le\Gamma$,
\[
\EE  \|\hat{\theta}_{k+1}\|^2 \le 2 \EE \|\hat{\theta}_k\|^2 + 2\gamma_k^2 
\EE  \|G_k\|^2  \le 2 K C_\Theta+ 2 \Gamma^2\bar{C}_g=C_2.  
\]

Next, consider the case $\EE \|\hat{\theta}_k\|^2 > K C_\Theta$, which, together with 
(\ref{C_Theta}), implies that $\EE \|\hat{\theta}_k\|^2 > K\EE \|\theta_k\|^2$.
Recall that, in view of (A1), $\bm{M}_k$ is a symmetric positive definite matrix  such that 
$0<\lambda_1 \le \Lambda_{(d)}(\bm{M}_k) \le \lambda_2<\infty$
almost surely. Therefore we obtain that, almost surely, 
$\hat{\theta}_k^T \bm{M}_k\hat{\theta}_k =\Lambda_{(d)}(\bm{M}_k)\|\hat{\theta}_k\|^2 
\ge \lambda_1 \|\hat{\theta}_k\|^2$ and, by the Cauchy-Schwarz inequality,   
\[
\hat{\theta}_k^T\bm{M}_k \theta_k 
\le | \hat{\theta}_k^T\bm{M}_k \theta_k|
\le 
\big(\hat{\theta}_k^T \bm{M}_k \hat{\theta}_k\big)^{1/2} 
\big(\theta_k^T \bm{M}_k \theta_k\big)^{1/2} 
 \le \lambda_2 \|\hat{\theta}_k\| \|\theta_k\|.
\]
By using the last two relation, (\ref{C_Theta}), (\ref{eq:D}), (\ref{eq:g2}) and 
(\ref{eq:algorithm_main}),  
we evaluate $\EE\|\hat\theta_{k+1}\|^2$:
\begin{align*}
\EE\|\hat\theta_{k+1}\|^2 \le & \EE \|\hat{\theta}_k\|^2 + 
2 \gamma_k \EE \EE \big[\hat{\theta}_k^T \EE(G_k|\bm{X}_{k-1})\big] + \gamma^2_k \EE  \|G_k\|^2\\
\le &
\EE \|\hat{\theta}_k\|^2 - 2 \gamma_k \EE  \big(\hat{\theta}_k^T \bm{M}_k (\hat{\theta}_k - \theta_k)\big) +
\gamma_k^2 \bar{C}_g\\
\le &
\EE \|\hat{\theta}_k\|^2 - 2 \gamma_k \big[\lambda_1 \EE  \|\hat{\theta}_k\|^2 
- \EE \big(\hat{\theta}_k^T\bm{M}_k \theta_k) \big]+\gamma_k^2 \bar{C}_g(KC_\Theta)^{-1} \EE \|\hat{\theta}_k\|^2\\
\le &
\EE \|\hat{\theta}_k\|^2 - 2 \gamma_k \big[\lambda_1 \EE \|\hat{\theta}_k\|^2 - 
\lambda_2 \EE \big(\|\hat{\theta}_k\| \|\theta_k\|\big)\big] +
\gamma_k^2 C_3\EE \|\hat{\theta}_k\|^2,
\end{align*}
with $C_3 = \bar{C}_g/(KC_\Theta)$.

Since $\EE \|\hat{\theta}_k\|^2 > K \EE \|\theta_k\|^2$, 
it follows that 
$\EE \big(\|\hat{\theta}_k\| \|\theta_k\|\big) \le \big(\EE \|\hat{\theta}_k\|^2\EE \|\theta_k\|^2\big)^{1/2} 
\le \frac{\EE \|\hat{\theta}_k\|^2}{\sqrt{K}}$.
Using this, we proceed by bounding the previous display as follows:
\begin{align*}
\le &
\EE \|\hat{\theta}_k\|^2 -2\gamma_k  \EE \|\hat{\theta}_k\|^2
\Big(\lambda_1-\frac{\lambda_2}{\sqrt{K}}\Big) + \gamma_k^2 C_3 \EE \|\hat{\theta}_k\|^2\\
= &
\EE  \|\hat{\theta}_k\|^2 - 2\gamma_k \EE  \|\hat{\theta}_k\|^2 
\Big(\lambda_1-\frac{\lambda_2}{\sqrt{K}}-\frac{C_3\gamma_k}{2}\Big)
\le \EE  \|\hat{\theta}_k\|^2, 
\end{align*}
for sufficiently large $K$ and sufficiently small $\gamma_k$.
Thus,  for sufficiently large $K$ and sufficiently small $\gamma_k$, 
$\EE\|\hat\theta_{k+1}\|^2\le C_2$ with $C_2$ as defined above. 
\end{proof}

\begin{lemma}
\label{lemma:eig}
Let  $\bm{M}$ be a symmetric positive definite matrix of order $d$, $p\ge1$ and a constant 
$\gamma>0$ be such that $\gamma\Lambda_{(d)}(\bm{M})<1$. 
Then $\|\bm{I}-\gamma \bm{M}\| = 1-\gamma \Lambda_{(1)}(\bm{M})$ and
\[
0<1-\gamma \Lambda_{(d)}(\bm{M}) = \Lambda_{(1)}(\bm{I}-\gamma \bm{M}) 
\le \Lambda_{(d)}(\bm{I} - \gamma \bm{M})=1-\gamma \Lambda_{(1)}(\bm{M})<1.
\]
Besides,  $\|\bm{M}\|_p \le K_p(d) \|\bm{M}\|=K_p(d) \Lambda_{(d)}(\bm{M})$ 
for some constant $K_p(d)>0$.
\end{lemma}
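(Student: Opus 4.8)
The plan is a direct computation via the spectral theorem. First I would diagonalize the symmetric positive definite matrix as $\bm{M}=\bm{Q}\,\diag(\mu_1,\dots,\mu_d)\,\bm{Q}^T$ with $\bm{Q}$ orthogonal and $0<\mu_1\le\cdots\le\mu_d$, so that $\mu_1=\Lambda_{(1)}(\bm{M})$ and $\mu_d=\Lambda_{(d)}(\bm{M})$. Then $\bm{I}-\gamma\bm{M}=\bm{Q}\,\diag(1-\gamma\mu_1,\dots,1-\gamma\mu_d)\,\bm{Q}^T$ is symmetric with eigenvalues $1-\gamma\mu_i$, $i=1,\dots,d$. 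Since the affine map $t\mapsto 1-\gamma t$ is strictly decreasing for $\gamma>0$, the eigenvalue ordering reverses: the smallest eigenvalue of $\bm{I}-\gamma\bm{M}$ is $1-\gamma\mu_d=1-\gamma\Lambda_{(d)}(\bm{M})$ and the largest is $1-\gamma\mu_1=1-\gamma\Lambda_{(1)}(\bm{M})$. This yields the displayed chain of (in)equalities, once one records that $1-\gamma\Lambda_{(d)}(\bm{M})>0$ is precisely the hypothesis $\gamma\Lambda_{(d)}(\bm{M})<1$, and that $1-\gamma\Lambda_{(1)}(\bm{M})<1$ because $\gamma\Lambda_{(1)}(\bm{M})>0$.

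Next, because $\gamma\Lambda_{(d)}(\bm{M})<1$ forces every eigenvalue $1-\gamma\mu_i$ to be strictly positive, the matrix $\bm{I}-\gamma\bm{M}$ is itself symmetric positive definite. For such a matrix the $\ell_2$-induced operator norm equals its spectral radius, hence its largest eigenvalue; so I would conclude $\|\bm{I}-\gamma\bm{M}\|=\Lambda_{(d)}(\bm{I}-\gamma\bm{M})=1-\gamma\Lambda_{(1)}(\bm{M})$. Applying the same identity to $\bm{M}$ itself gives $\|\bm{M}\|=\Lambda_{(d)}(\bm{M})$, which I will need for the last assertion.

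For the bound on $\|\bm{M}\|_p$, I would invoke the equivalence of norms on $\mathbb{R}^d$: there is a constant $K_p(d)>0$ with $\|y\|_p\le K_p(d)\|y\|_2$ for all $y\in\mathbb{R}^d$ (one may take $K_p(d)=1$ for $p\ge 2$ and $K_p(d)=d^{1/p-1/2}$ for $1\le p<2$, or the uniform value $K_p(d)=\sqrt{d}$). Then, with the matrix norm as defined in Section~\ref{sec:preliminaries},
\[
\|\bm{M}\|_p=\max_{\|x\|=1}\|\bm{M}x\|_p\le K_p(d)\max_{\|x\|=1}\|\bm{M}x\|=K_p(d)\|\bm{M}\|=K_p(d)\,\Lambda_{(d)}(\bm{M}).
\]
Everything here is routine linear algebra and there is no genuine obstacle; the only points that call for mild care are keeping track of the reversal of the eigenvalue ordering under $t\mapsto 1-\gamma t$, and using the hypothesis $\gamma\Lambda_{(d)}(\bm{M})<1$ exactly where it is needed, namely to guarantee $\bm{I}-\gamma\bm{M}$ is positive definite so that its operator norm is $\max_i(1-\gamma\mu_i)$ rather than $\max_i|1-\gamma\mu_i|$.
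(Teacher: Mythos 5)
Your proof is correct, and for the two eigenvalue assertions it follows essentially the same computation as the paper: both arguments note that the eigenvalues of $\bm{I}-\gamma\bm{M}$ are $1-\gamma\lambda_i$ and that the hypothesis $\gamma\Lambda_{(d)}(\bm{M})<1$ keeps all of them in $(0,1)$, so that $\|\bm{I}-\gamma\bm{M}\|=1-\gamma\Lambda_{(1)}(\bm{M})$ (the paper phrases this via $\max_i|1-\gamma\lambda_i|$ without writing out the spectral decomposition, but the content is identical). Where you genuinely diverge is the last assertion $\|\bm{M}\|_p\le K_p(d)\|\bm{M}\|$: the paper invokes Theorem 5.6.18 of Horn and Johnson, which identifies $\max_{\bm{M}\neq\bm{O}}\|\bm{M}\|_p/\|\bm{M}\|_2$ with the product $R_2^p(d)R_p^2(d)$ of two vector-norm ratios and then bounds each ratio, whereas you argue directly from the equivalence of vector norms, which is more elementary and self-contained. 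One caveat: your one-line chain uses the preliminaries' literal definition $\|\bm{M}\|_p=\max_{\|x\|=1}\|\bm{M}x\|_p$ with $\|x\|=\|x\|_2$, i.e.\ a mixed $(2\to p)$ norm; the paper's own proof (and the way the lemma is used in the proof of Theorem~\ref{theo:bound2}, which needs submultiplicativity and consistency with the vector $\ell_p$ norm) treats $\|\cdot\|_p$ as the genuine induced $\ell_p$ operator norm $\max_{\|x\|_p=1}\|\bm{M}x\|_p$. For that norm your chain needs one extra comparison, $\|x\|_2\le\|x\|_p$ when $1\le p\le 2$ and $\|x\|_2\le d^{1/2-1/p}\|x\|_p$ when $p\ge 2$, which changes the constant to the product of the two ratios; since one of the two ratios is always at most $1$, this still yields the paper's bounds $K_p(d)\le d^{|1/2-1/p|}\le d^{1/2}$. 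So the stated conclusion (existence of some $K_p(d)>0$) holds under either reading; your route buys an elementary proof without the Horn--Johnson citation, at the price of being explicit about which operator norm is meant.
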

\begin{proof}
Let $\lambda_i$'s be the eigenvalues of $\bm{M}$, so that the matrix $\bm{I}-\gamma \bm{M}$ 
has eigenvalues $1-\gamma \lambda_i$, $i=1, \dots, d$. 
Since $\gamma\Lambda_{(d)}(\bm{M})<1$, then, for all $i=1, \dots, d$, 
$0<\gamma\Lambda_{(1)}(\bm{M}) \leq \gamma\lambda_i \leq \gamma\Lambda_{(d)}(\bm{M})<1$, implying 
$1> 1-\gamma\Lambda_{(1)}(\bm{M})\geq 1-\gamma\lambda_i\geq1-\gamma\Lambda_{(d)}(\bm{M})>0$, so that
$\|\bm{I}-\gamma \bm{M}\| =\max_ i|1-\gamma\lambda_i| = 1-\gamma\Lambda_{(1)}(\bm{M})<1$.
The first two assertions follow.

It remains to prove the last assertion.
For $x\in\mathbb{R}^d$, let $R_2^p(d)= \max_{x\neq 0}\|x\|_p/\|x\|_2$ and
$R_p^2(d)= \max_{x\neq 0}\|x\|_2/\|x\|_p$.
According to Theorem 5.6.18 from \citep{Horn&Johnson:1991}, 
\[
\max_{M\neq O}\frac{\|\bm{M}\|_p}{\|\bm{M}\|_2} = R_2^p(d)R_p^2(d)=K_p(d).
\]
Recall that $\|\bm{M}\|_2=\|\bm{M}\|=\lambda_{(d)}(\bm{M})$ and 
$\|x\|_s \le \|x\|_r \le d^{1/r-1/s}\|x\|_s$ for any $x\in \mathbb{R}^d$ and $s\ge r\ge 1$.
From the last relation it is easy to get the following bounds:
$R_2^p(d) \le 1$ if $p\ge 2$, $R_2^p(d) \le d^{(2-p)/(2p)}$ if $1\le p<2$;
$R_p^2(d) \le d^{(p-2)/(2p)}$ if $p\ge 2$, $R_p^2(d) \le 1$ if $1\le p<2$. 
These bounds imply that $K_p(d) \le d^{(p-2)/(2p)}$ if $p\ge 2$ and 
$K_p(d) \le d^{(2-p)/(2p)} \le d^{1/2}$ if $1\le p<2$.
This completes the proof of the lemma.
\end{proof}

\begin{lemma}[Abel tranformation]
\label{lemma:abel}
Suppose $d_1,d, k_0, k \in \mathbb{N}$ and $k_0 \le k$. Let $\bm{B}_i$ be 
$(d_1\times d)$-matrices, $a_i \in \mathbb{R}^d$ and $A_i=\sum_{j=k_0}^i a_j$, $i=k_0,\ldots, k$.
Then
\[
\sum_{i=k_0}^k \bm{B}_i a_i = \sum_{i=k_0}^{k-1}(\bm{B}_i-\bm{B}_{i+1})A_i + \bm{B}_k A_k.
\]
\end{lemma}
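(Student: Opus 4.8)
The plan is to prove this by a direct summation-by-parts manipulation. First I would adopt the bookkeeping convention $A_{k_0-1} = 0$ (the zero vector in $\mathbb{R}^d$, consistent with the paper's convention that a sum over the empty index set is zero), so that the telescoping identity $a_i = A_i - A_{i-1}$ holds for every $i = k_0, \ldots, k$, including $i = k_0$.

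Substituting $a_i = A_i - A_{i-1}$ into the left-hand side and splitting gives
\[
\sum_{i=k_0}^k \bm{B}_i a_i = \sum_{i=k_0}^k \bm{B}_i A_i - \sum_{i=k_0}^k \bm{B}_i A_{i-1}.
\]
Next I would reindex the second sum by $j = i-1$, obtaining $\sum_{j=k_0-1}^{k-1}\bm{B}_{j+1}A_j$; the $j = k_0-1$ term vanishes because $A_{k_0-1}=0$, so the second sum equals $\sum_{i=k_0}^{k-1}\bm{B}_{i+1}A_i$. Peeling off the $i = k$ term from the first sum then yields
\[
\sum_{i=k_0}^k \bm{B}_i a_i = \bm{B}_k A_k + \sum_{i=k_0}^{k-1}\big(\bm{B}_i - \bm{B}_{i+1}\big)A_i,
\]
which is exactly the asserted identity. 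As an alternative I could argue by induction on $k$: the base case $k = k_0$ is immediate since both sides equal $\bm{B}_{k_0}A_{k_0}$ (the RHS sum being empty), and the inductive step follows by adding $\bm{B}_{k+1}a_{k+1}$ to both sides of the identity for $k$ and using $a_{k+1} = A_{k+1} - A_k$ together with $\bm{B}_k A_k + (\bm{B}_k - \bm{B}_{k+1})$ bookkeeping.

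Since this is an elementary algebraic identity about finite sums of matrix--vector products, there is no real obstacle here. The only points requiring mild care are the bookkeeping at the lower endpoint (the vanishing of the $j = k_0-1$ term via the empty-sum convention) and the observation that throughout we use only distributivity and associativity of matrix--vector multiplication, never commutativity, so the argument is valid for the rectangular $(d_1\times d)$-matrices $\bm{B}_i$ acting on vectors $a_i, A_i \in \mathbb{R}^d$.
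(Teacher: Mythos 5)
Your proof is correct, but it takes a different (if equally elementary) route from the paper. You argue by direct summation by parts: you set $A_{k_0-1}=0$, substitute $a_i = A_i - A_{i-1}$, split the sum, reindex the second piece, and peel off the $i=k$ term; the lower-endpoint bookkeeping via the empty-sum convention is handled correctly, and your remark that only distributivity of the matrix--vector action is used (no commutativity) is apt. The paper instead proves the identity by induction on $k$: the base case $k=k_0$ gives $\bm{B}_{k_0}A_{k_0}=\bm{B}_{k_0}a_{k_0}$, and the inductive step adds $\bm{B}_{n+1}a_{n+1}$, inserts and removes the term $(\bm{B}_n-\bm{B}_{n+1})A_n$, and uses $\bm{B}_nA_n+\bm{B}_{n+1}a_{n+1}=(\bm{B}_n-\bm{B}_{n+1})A_n+\bm{B}_{n+1}A_{n+1}$ after the correction. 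The two arguments are essentially reorganizations of the same telescoping cancellation; your direct version makes the structural reason for the identity (Abel summation with $a_i=A_i-A_{i-1}$) more transparent in one pass, while the paper's induction avoids any reindexing or boundary convention at the cost of a slightly more mechanical verification. Your closing sketch of the induction alternative is a little garbled in its final phrase (``$\bm{B}_k A_k + (\bm{B}_k - \bm{B}_{k+1})$ bookkeeping''), but since it is only an aside and your main argument is complete, this does not affect correctness.
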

\begin{proof}
We prove this by induction in $k$.
For $k=k_0$ we simply have $\bm{B}_{k_0} a_{k_0} = \bm{B}_{k_0} A_{k_0} = \bm{B}_{k_0} a_{k_0}$ 
and the assertion holds true.
Assume that the equality holds for $k=n$ and let us prove the result for $k=n+1$.
We have
\begin{align*}
\sum_{i=k_0}^{n+1} \bm{B}_i a_i&
=\sum_{i=k_0}^n \bm{B}_i a_i + \bm{B}_{n+1}a_{n+1} 
= \sum_{i=k_0}^{n-1}(\bm{B}_i-\bm{B}_{i+1})A_i + \bm{B}_nA_n + \bm{B}_{n+1}a_{n+1} \\
&=\sum_{i=k_0}^{n}(\bm{B}_i-\bm{B}_{i+1}) A_i - (\bm{B}_n-\bm{B}_{n+1})A_n 
+ \bm{B}_n A_n + \bm{B}_{n+1}a_{n+1} \\
&=\sum_{i=k_0}^{n}(\bm{B}_i-\bm{B}_{i+1})A_i +\bm{B}_{n+1}A_{n+1}.
\end{align*}
\end{proof}

\begin{lemma}
\label{lemma:truncated_conditional}
Consider an AR(1)-model with a measurable 
$\theta_k=\theta_k(\bm{X}_{k-1})$:
\[
X_k = X_{k-1} \theta_k + \xi_k, \quad k\in\mathbb{N},
\]
where 
$\mathbb{E} [\xi_k|\bm{X}_{k-1}]=\mathbb{E} [\xi_k^3|\bm{X}_{k-1}]=0$, $\mathbb{E}[\xi_k^2|\bm{X}_{k-1}]=\sigma^2>0$, 
and $\mathbb{E}[\xi_k^4|\bm{X}_{k-1}]=c \sigma^4$, $k\in\mathbb{N}$, for some constant $0<c<5$.
Then, for any $T$ such that $T\ge(9-c)\sigma^2/4$,
\[
\mathbb{E}\big[\min(X_k^2,T)|\bm{X}_{k-1}\big] \ge \frac{(5-c)\sigma^2}{4},\quad k \in \mathbb{N}.
\]
\end{lemma}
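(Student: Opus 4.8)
The plan is to work conditionally on $\bm{X}_{k-1}$ throughout. Since $\theta_k=\theta_k(\bm{X}_{k-1})$ and $X_{k-1}$ are measurable with respect to $\bm{X}_{k-1}$, I would write $X_k=a_k+\xi_k$ with $a_k=X_{k-1}\theta_k$ playing the role of a constant, and use the elementary identity $\min(u,v)=\tfrac12(u+v)-\tfrac12|u-v|$ with $u=X_k^2$, $v=T$, to obtain
\[
\EE\big[\min(X_k^2,T)\mid\bm{X}_{k-1}\big]=\tfrac12\big(\EE[X_k^2\mid\bm{X}_{k-1}]+T\big)-\tfrac12\,\EE\big[|X_k^2-T|\mid\bm{X}_{k-1}\big].
\]
The first term will be computed exactly; everything then reduces to an upper bound on $\EE[|X_k^2-T|\mid\bm{X}_{k-1}]$.

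First I would evaluate the conditional moments from the hypotheses. Expanding $(a_k+\xi_k)^2$ and $(a_k+\xi_k)^4$ and using $\EE[\xi_k\mid\bm{X}_{k-1}]=\EE[\xi_k^3\mid\bm{X}_{k-1}]=0$, $\EE[\xi_k^2\mid\bm{X}_{k-1}]=\sigma^2$, $\EE[\xi_k^4\mid\bm{X}_{k-1}]=c\sigma^4$, gives $\EE[X_k^2\mid\bm{X}_{k-1}]=a_k^2+\sigma^2$ and $\EE[X_k^4\mid\bm{X}_{k-1}]=a_k^4+6a_k^2\sigma^2+c\sigma^4$. By the conditional Cauchy--Schwarz (Jensen) inequality, $\EE[|X_k^2-T|\mid\bm{X}_{k-1}]\le\big(\EE[(X_k^2-T)^2\mid\bm{X}_{k-1}]\big)^{1/2}$, and $\EE[(X_k^2-T)^2\mid\bm{X}_{k-1}]=a_k^4+6a_k^2\sigma^2+c\sigma^4-2Ta_k^2-2T\sigma^2+T^2$.

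Substituting into the identity above, the claimed bound $\EE[\min(X_k^2,T)\mid\bm{X}_{k-1}]\ge\tfrac14(5-c)\sigma^2$ is equivalent to $\big(\EE[(X_k^2-T)^2\mid\bm{X}_{k-1}]\big)^{1/2}\le a_k^2+T+\tfrac12(c-3)\sigma^2$. The right-hand side is nonnegative, since $a_k^2+T+\tfrac12(c-3)\sigma^2\ge\tfrac14(9-c)\sigma^2+\tfrac12(c-3)\sigma^2=\tfrac14(c+3)\sigma^2>0$ by the hypothesis $T\ge\tfrac14(9-c)\sigma^2$, so the inequality may be squared. A direct expansion, using $(c-3)^2-4c=(c-1)(c-9)$, yields
\[
\Big(a_k^2+T+\tfrac12(c-3)\sigma^2\Big)^2-\EE[(X_k^2-T)^2\mid\bm{X}_{k-1}]=a_k^2\big(4T-(9-c)\sigma^2\big)+(c-1)\sigma^2\Big(T-\tfrac14(9-c)\sigma^2\Big).
\]
Both terms on the right are nonnegative: the first because $a_k^2\ge0$ and $T\ge\tfrac14(9-c)\sigma^2$, the second because $T\ge\tfrac14(9-c)\sigma^2$ and $c\ge1$; here $c\ge1$ holds automatically, since $c\sigma^4=\EE[\xi_k^4\mid\bm{X}_{k-1}]\ge\big(\EE[\xi_k^2\mid\bm{X}_{k-1}]\big)^2=\sigma^4$ by Jensen. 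This proves the lemma.

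The computation is essentially routine; the two points to be careful about are that the crude bound $\EE[|X_k^2-T|\mid\bm{X}_{k-1}]\le(\EE[(X_k^2-T)^2\mid\bm{X}_{k-1}])^{1/2}$ is in fact tight enough provided one retains the full quadratic in $T$ (discarding the $-2T\sigma^2$ term would ruin it), and that $c\ge1$ always holds, which is what makes the stated hypothesis range $0<c<5$ (rather than $1\le c<5$) harmless and is exactly what is needed for the second summand above to be nonnegative. The main --- if modest --- obstacle I anticipate is carrying out the expansion and spotting the factorization $(c-3)^2-4c=(c-1)(c-9)$ that reduces the difference to the manifestly nonnegative form displayed above.
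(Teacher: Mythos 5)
Your proof is correct and follows essentially the same route as the paper: the identity $\min(u,v)=\tfrac12(u+v)-\tfrac12|u-v|$, the exact conditional second and fourth moments of $X_k$, and the conditional Cauchy--Schwarz bound on $\EE[|X_k^2-T|\mid\bm{X}_{k-1}]$. The only difference is the endgame: the paper evaluates at the extreme value $T=(9-c)\sigma^2/4$, where the quadratic under the square root is a perfect square (monotonicity of $\min(\cdot,T)$ in $T$ giving the general case), while you keep general $T$ and absorb the extra terms using $T\ge(9-c)\sigma^2/4$ together with the observation $c\ge1$ from Jensen --- a valid and slightly more self-contained variant.
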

\begin{proof}
We  compute  
\begin{align*}
\mathbb{E}[X_k^2|\bm{X}_{k-1}]& = X_{k-1}^2 \theta_k^2 + 2X_{k-1}\theta_k 
\mathbb{E}[\xi_k|\bm{X}_{k-1}] + \mathbb{E}[\xi_k^2|\bm{X}_{k-1}] = X_{k-1}^2 \theta_k^2+\sigma^2,\\
\mathbb{E}[X_k^4 |\bm{X}_{k-1}]	
& = X_{k-1}^4\theta_k^4 - 4X_{k-1}^3\theta_k^3 \mathbb{E}[\xi_k|\bm{X}_{k-1}] 
+ 6 X_{k-1}^2\theta_k^2 \mathbb{E}[\xi_k^2|\bm{X}_{k-1}] \\
& \;\;\; - 
4X_{k-1}\theta_k \mathbb{E}[\xi_k^3|\bm{X}_{k-1}]+\mathbb{E}[\xi_k^4|\bm{X}_{k-1}]
 = X_{k-1}^4\theta_k^4+6X_{k-1}^2\theta_k^2\sigma^2+c\,\sigma^4.
\end{align*}
For $a,b\in\mathbb{R}$ we have $\min(a,b) = (a+b)/2 - |a-b|/2$. Using this relation, 
conditional version of Jensen's inequality and the last display, we derive:
\begin{align*}
\mathbb{E} \big[\min\big(X_k^2, \rho\,\sigma^2\big)|\bm{X}_{k-1}\big] &= 
\frac{1}{2}\mathbb{E}\big[X_k^2 +\rho\sigma^2-|X_k^2-\rho\sigma^2| \big|\bm{X}_{k-1}\big]\\
& \ge \frac{1}{2} \Big[X_{k-1}^2\theta_k^2 +(\rho+1)\sigma^2
-\Big(\mathbb{E}\big[\big(X_k^2-\rho\sigma^2\big)^2|\bm{X}_{k-1}\big]\Big)^{1/2}\Big],
\end{align*}
for $\rho>0$.
We now have, by plugging in the expressions derived above and simplifying,
\begin{align*}
\mathbb{E}\big[\big(X_k^2 & -\rho\sigma^2\big)^2|\bm{X}_{k-1}\big]
= \mathbb{E}\big[X_k^4|\bm{X}_{k-1}\big] - 2\rho\sigma^2\mathbb{E}\big[X_k^2|\bm{X}_{k-1}\big]
+\rho^2\sigma^4\\
&=X_{k-1}^4\theta_k^4 + 2(3-\rho)X_{k-1}^2\theta_k^2\sigma^2
+(c-2\rho+\rho^2)\sigma^4 = \Big(X_{k-1}^2\theta_k^2+\frac{c+3}4\sigma^2\Big)^2,
\end{align*}
if we pick $\rho=(9-c)/4>1$.
Combining the previous two displays, we conclude that
\[
\mathbb{E}\big[\min\big(X_k^2, (9-c)\sigma^2/4\big)\big|\bm{X}_{k-1}\big] \ge 
\frac{(5-c)\sigma^2}{4}, \quad k \in \mathbb{N},
\]
and the statement of the lemma follows.
\end{proof}

\bibliographystyle{apa}

\begin{thebibliography}{}

\bibitem[\protect\astroncite{Antsaklis and Michel}{2007}]{Antsaklis:2007}
Antsaklis, P. and Michel, A. (2007).
\newblock {\em A Linear Systems Primer}.
\newblock Birkh\"auser.

\bibitem[\protect\astroncite{Bach and Moulines}{2011}]{Bach:2011}
Bach, F. and Moulines, E. (2011).
\newblock Non-asymptotic analysis of stochastic approximation algorithms for
  machine learning.
\newblock {\em Advances in Neural Information Processing Systems (NIPS)}.

\bibitem[\protect\astroncite{Bathia}{1997}]{Bathia:1997}
Bathia, R. (1997).
\newblock {\em Matrix Analysis}, volume 169 of {\em Graduate texts in
  Mathematics}.
\newblock Springer-Verlag, New York.

\bibitem[\protect\astroncite{Belitser}{2000}]{Belitser:2000}
Belitser, E. (2000).
\newblock Recursive estimation of a drifting autoregressive parameter.
\newblock {\em Ann. Statist.}, 28:860--870.

\bibitem[\protect\astroncite{Belitser}{2004}]{Belitser:2004}
Belitser, E. (2004).
\newblock On asymptotic expansion of pseudovalues in nonparametric median
  regression.
\newblock {\em Statistics \& Decisions}, 22:1--16.

\bibitem[\protect\astroncite{Belitser and Serra}{2013}]{Belitser:2013}
Belitser, E. and Serra, P. (2013).
\newblock On properties of the algorithm for pursuing a drifting quantile.
\newblock {\em Automation and Remote Control}, 74:613--627.

\bibitem[\protect\astroncite{Belitser and Korostelev}{1992}]{Belitser:1992}
Belitser, E.~N. and Korostelev, A.~P. (1992).
\newblock Pseudovalues and minimax filtering algorithms pseudovalues and
  minimax filtering algorithms for the nonparametric median.
\newblock {\em Adv. in Sov. Math.}, 12:115--124.

\bibitem[\protect\astroncite{Benveniste et~al.}{1990}]{Benveniste:1990}
Benveniste, A., Metivier, M., and Priouret, P. (1990).
\newblock {\em Adaptive Algorithms and Stochastic Approximation.}
\newblock New York, Berlin: Springer-Verlag.

\bibitem[\protect\astroncite{Brossier}{1992}]{Brossier:1992}
Brossier, J.-M. (1992).
\newblock {\em Egalization Adaptive et Estimation de Phase: Application aux
  Communications Sous-Marines.}
\newblock PhD thesis, Institut National Polytechnique de Grenoble.

\bibitem[\protect\astroncite{Delyon and Juditsky}{1995}]{Delyon:1995}
Delyon, B. and Juditsky, A. (1995).
\newblock Asymptotical study of parameter tracking algorithms.
\newblock {\em SIAM Journal on Control and Optimization}, 33:323--345.

\bibitem[\protect\astroncite{Horn and Johnson}{1988}]{Horn:1988}
Horn, R.~A. and Johnson, C.~R. (1988).
\newblock {\em Matrix Analysis}.
\newblock Cambridge University Press.

\bibitem[\protect\astroncite{Kiefer and Wolfowitz}{1952}]{Kiefer:1952}
Kiefer, J. and Wolfowitz, J. (1952).
\newblock Stochastic estimation of the maximum of a regression function.
\newblock {\em The Annals of Mathematical Statistics}, 23:462--466.

\bibitem[\protect\astroncite{Kushner and Clark}{1978}]{Kushner:1978}
Kushner, H.~J. and Clark, D.~S. (1978).
\newblock {\em Stochastic Approximation for Constrained and Unconstrained
  Systems.}
\newblock Springer Verlag, New York.

\bibitem[\protect\astroncite{Kushner and Yang}{1995}]{Kushner:1995}
Kushner, H.~J. and Yang, J. (1995).
\newblock Analysis of adaptive step-size sa algorithms for parameter tracking.
\newblock {\em IEEE Trans. Autom. Control}, 40:1403--1410.

\bibitem[\protect\astroncite{Kushner and Yin}{2003}]{Kushner:2003}
Kushner, H.~J. and Yin, G. (2003).
\newblock {\em Stochastic Approximation and Recursive Algorithms and
  Applications.}
\newblock Berlin and New York: Springer-Verlag.

\bibitem[\protect\astroncite{Ljung and S\"oderstr\"om}{1983}]{Ljung:1983}
Ljung, L. and S\"oderstr\"om, T. (1983).
\newblock {\em Theory and Practice of Recursive Identification.}
\newblock MIT Press, Cambridge, MA.

\bibitem[\protect\astroncite{Moulines et~al.}{2005}]{Moulines:2005}
Moulines, E., Priouret, P., and Roueff, F. (2005).
\newblock On recursive estimation for time varying autoregressive processes.
\newblock {\em Ann. Statist.}, 33:2610--2654.

\bibitem[\protect\astroncite{Nevelson and Khasminskii}{1976}]{Nevelson:1976}
Nevelson, M.~B. and Khasminskii, R.~Z. (1976).
\newblock {\em Stochastic Approximation and Recursive Estimation.}, volume~47
  of {\em Translation of Mathematical Monographs}.
\newblock American American Mathematical Society.

\bibitem[\protect\astroncite{Robbins and Monro}{1951}]{Robbins:1951}
Robbins, H. and Monro, S. (1951).
\newblock A stochastic approximation method.
\newblock {\em The Annals of Mathematical Statistics}, 22:400--407.

\bibitem[\protect\astroncite{Shiryaev}{1996}]{Shiryaev:1996}
Shiryaev, A.~N. (1996).
\newblock {\em Probability}.
\newblock Springer-Verlag, New York, second edition.

\bibitem[\protect\astroncite{Spall}{1992}]{Spall:1992}
Spall, J.~C. (1992).
\newblock Multivariate stochastic approximation using a simultaneous
  perturbation gradient approximation.
\newblock {\em Automatic Control, IEEE Transactions on}, 37:332--341.

\bibitem[\protect\astroncite{Tsypkin}{1971}]{Tsypkin:1971}
Tsypkin, Y. (1971).
\newblock {\em Adaptation and Learning in Automatic Systems.}
\newblock Academic Press, New York.

\bibitem[\protect\astroncite{Wasan}{1969}]{Wasan:1969}
Wasan, M.~T. (1969).
\newblock {\em Stochastic Approximation.}
\newblock Cambridge University Press.

\end{thebibliography}


\begin{thebibliography}{18}

\bibitem{Antsaklis:2007}
Panos J.~A. and  Michel, A.~N. (2007).
\newblock \emph{A Linear Systems Primer}.
\newblock Basel: Birkh\"auser.

\bibitem{Bach&Moulines:2011}
Bach F. and Moulines, E. (2011).  Non-asymptotic analysis of stochastic 
approximation algorithms for machine learning. NIPS, Spain. 

\bibitem{Bathia:1997}
Bathia, R. (1997).
\newblock \emph{Matrix Analysis}
\newblock  New York: Springer.

\bibitem{Belitser:2000}
Belitser, E. (2000).
\newblock Recursive estimation of a drifting autoregressive parameter.
\newblock \emph{Ann. Statist.} 28(3): 860--870.

\bibitem{Belitser&Korostelev:1992}
Belitser, E.~N. and  Korostelev, A.~P. (1992).
Pseudovalues and minimax filtering algorithms 
for the nonparametric median. Adv. in Sov. Math. 12, 115--124. 

\bibitem{Belitser&vandeGeer:2000}
Belitser, E. and van de Geer, S. (2000).
On robust recursive nonparametric curve estimation.
High dimensional probability II. Birkh\"auser, Progr. Probab.
47, 391--404.

\bibitem{Belitser&Serra:2013}
Belitser, E. and Serra, P. (2013).
\newblock On properties of the algorithm for pursuing a drifting quantile.
\newblock \emph{Automation and Rem. Control}, 74(4): 613--627.

\bibitem{Benveniste:1990}
Benveniste, A,  Metivier, M. and Priouret, P. (1990).
\newblock \emph{Adaptive Algorithms and Stochastic Approximation.}
\newblock  Berlin: Springer.

\bibitem{Brossier:1992}
Brossier, J.-M. (1992).
\newblock \emph{Egalization Adaptive et Estimation de Phase: Application aux
Communications Sous-Marines.}
\newblock PhD thesis, Institut National Polytechnique de Grenoble.

\bibitem{Chow:1988}
Chow, Y.~S. and Teicher, H. (1988).
\newblock \emph{Probability Theory. Independence, Interchangeability, Martingales}.
\newblock Springer texts in Statistics. New York: Springer Verlag, second edition.

\bibitem{Delyon:1995}
Delyon, B. and Juditsky, A. (1995).
\newblock Asymptotical study of parameter tracking algorithms.
\newblock \emph{SIAM J. Control and Optimization}, 33(1), 323--345.

\bibitem[Horn and Johnson(1991)]{Horn&Johnson:1991}
Horn, R.~A. and  Charles, R.~J. (1991).
\newblock \emph{Topics in Matrix Analysis}.
\newblock Cambridge: Cambridge University Press.

\bibitem{Kiefer:1952}
Kiefer, J. and Wolfowitz, J. (1952).
\newblock Stochastic estimation of the maximum of a regression function.
\newblock \emph{Ann. Math. Statist.}, 23(3), 462--466.

\bibitem{Kushner:1978}
Kushner, H.~J. and Clark, D.~S. (1978).
\newblock \emph{Stochastic Approximation for Constrained and Unconstrained Systems.}
\newblock New York: Springer Verlag.

\bibitem{Kushner:1995}
Kushner, H.~J. and Yang, J. (1995).
\newblock Analysis of adaptive step-size sa algorithms for parameter tracking.
\newblock \emph{IEEE Trans. Autom. Control}, 40, 1403--1410.

\bibitem{Kushner:2003}
Kushner, H.~J and Yin, G. (2003).
\newblock \emph{Stochastic Approximation and Recursive Algorithms and Applications.}
\newblock Berlin: Springer-Verlag.

\bibitem{Ljung:1983}
Ljung, L. and S\"oderstr\"om, T. (1983).
\newblock \emph{Theory and Practice of Recursive Identification.}
\newblock Cambridge: MIT Press, MA.

\bibitem{Moulines:2005}
Moulines, E, Priouret, P. and Roueff, F. (2005).
\newblock On recursive estimation for time varying autoregressive processes.
\newblock \emph{Ann. Statist.}, 33(6), 2610--2654.

\bibitem{Nevelson:1976}
Nevelson, M.~B. and  Khasminskii, R.~Z. (1976).
\newblock \emph{Stochastic Approximation and Recursive Estimation.}, volume~47
 of \emph{Translation of Mathematical Monographs}.
\newblock American American Mathematical Society.

\bibitem{Robbins:1951}
Robbins, H. and Monro, S. (1951).
\newblock A stochastic approximation method.
\newblock \emph{Ann. Math. Statist.}, 22(3), 400--407.

\bibitem{Shiryaev:1996} 
Shiryaev, A.~N. (1996).
{\itshape Probability}. New York: Springer, second edition.


\bibitem{Spall:1992}
Spall, J.~C. (1992).
Multivariate Stochastic Approximation Using a Simultaneous Perturbation Gradient Approximation.
\emph{IEEE Trans. Autom. Control} 37(3), 332--341.

\bibitem{Tsypkin:1971}
Tsypkin, Y.~Z. (1971).
\newblock \emph{Adaptation and Learning in Automatic Systems.}
\newblock New York: Academic Press.

\bibitem{Wasan:1969}
Wasan, M.~T. (1969).
\newblock \emph{Stochastic Approximation.}
\newblock Cambridge: Cambridge University Press.

\end{thebibliography}

\end{document}